\documentclass{amsart}%
\usepackage{amsmath}
\usepackage{amsfonts}
\usepackage{amssymb}
\usepackage{graphicx}%
\newtheorem{lemma}{Lemma}[section]
\newtheorem{theorem}[lemma]{Theorem}
\newtheorem{remark}[lemma]{Remark}

\newtheorem{coro}[lemma]{Corollary}
\newtheorem{definition}[lemma]{Definition}
\newtheorem{example}[lemma]{Example}

\title[Approximation of Attractors \ldots]{Approximation of Attractors of Nonautonomous Lattice Dynamical Systems}

\author{David Cheban}
\address[D. Cheban]{State University of Moldova\\
Vladimir Andrunachievici Instiy=tute of Mathematics and Computer Science\\
Laboratory of Differential Equations\\str. Academiei 5\\
MD--2028 Chi\c{s}in\u{a}u, Moldova} \email[D.
Cheban]{david.ceban@usm.md, davidcheban@yahoo.com}
\author{Andrei Sultan}
\address[A. Sultan]{%
State University of Moldova\\Vladimir Andrunachievici Institute of
Mathematics and Computer Science\\ Laboratory of Differential Equations\\str. Academiei 5\\
MD--2028 Chi\c{s}in\u{a}u, Moldova} \email[A.
Sultan]{andrew15sultan@gmail.com}

\date{\today}

\subjclass{34D05, 34D45, 34G20, 37B55}

\begin{document}

\begin{abstract}
The aim of this paper is to study the finite-dimensional
approximations of the nonautonomous lattice dynamical systems of
the form $u_{i}'=\nu (u_{i-1}-2u_i+u_{i+1})-\lambda
u_{i}+F(u_i)+f_{i}(t)\ (i\in \mathbb Z)\ (*)$. We show that the
finite-dimensional approximations for (*) are uniformly
dissipative. The upper semi-continuous convergence of the
attractors of the finite-dimensional approximations is
established.
\end{abstract}

\maketitle

\section{Introduction}\label{Sec1}

Denote by $\mathbb R :=(-\infty,\infty)$, $\mathbb Z :=\{0,\pm
1,\pm 2,\ldots\}$ and $\ell_{2}$ the Hilbert space of all
two-sided sequences $\xi =(\xi_{i})_{i\in \mathbb Z}$ ($\xi_{i}\in
\mathbb R$) with
\begin{equation}\label{eqI_1}
\sum\limits_{i\in \mathbb Z}|\xi_{i}|^{2}<+\infty \nonumber
\end{equation}
and equipped with the scalar product
\begin{equation}\label{eqI_2}
\langle \xi,\eta\rangle :=\sum\limits_{i\in \mathbb
Z}\xi_{i}\eta_{i} .\nonumber
\end{equation}
Let $(\mathfrak B, \|\cdot \|_{\mathfrak B})\footnote{In what
follows, in the notation $\|\cdot \|_{\mathfrak B}$, we will omit
the index $\mathfrak B$ if this does not lead to a
misunderstanding.}$ be a Banach space with the norm
$\|\cdot\|_{\mathfrak B}$, $C(\mathbb R,\mathfrak B)$ be the space
of all continuous functions $f:\mathbb R\to \mathfrak B$ equipped
with the distance
\begin{equation}\label{eqI_3}
d(f_1,f_2):=\sup\limits_{L>0}\min\{\max\limits_{|t|\le
L}\|f_{1}(t)-f_{2}(t)\|,L^{-1}\}.
\end{equation}
The metric space $(C(\mathbb R,\mathfrak B),d)$ is complete and
the distance $d$, defined by (\ref{eqI_3}), generates on the space
$C(\mathbb R,\mathfrak B)$ the compact-open topology.

Let $h\in \mathbb R$, $f\in C(\mathbb R,\mathfrak B)$,
$f^{h}(t):=f(t+h)$ for all $t\in \mathbb R$ and $\sigma :\mathbb
R\times C(\mathbb R,\mathfrak B)\to C(\mathbb R,\mathfrak B)$ be a
mapping defined by $\sigma(h,f):=f^{h}$ for every $(h,f)\in \mathbb
R\times C(\mathbb R,\mathfrak B)$. Then \cite[Ch.I]{Che_2015} the
triplet $(C(\mathbb R,\mathfrak B),\mathbb R,\sigma)$ is a shift
dynamical system (or Bebutov's dynamical system) on he space
$C(\mathbb R,\mathfrak B)$. By $H(f)$ the closure in the space
$C(\mathbb R,\mathfrak B)$ of $\{f^{h}|\ h\in \mathbb R\}$ is
denoted.

Denote by $\mathfrak F$ a subset of the space $C(\mathbb
R,\ell_{2})$.

\begin{definition}\label{defUSC1} A family of function $\mathfrak F$ from
$C(\mathbb T,\ell_{2})$ is said to be uniformly equi-continuous if
for arbitrary positive number $\varepsilon$ there exists a
positive number $\delta =\delta(\varepsilon)$ such that
$|t_1-t_2|<\delta$ ($t_1,t_2\in \mathbb T$) implies
$\|\varphi(t_1)-\varphi(t_2)\|<\varepsilon$ for all $\varphi \in
\mathfrak F$.
\end{definition}

\begin{lemma}\label{lAPF02} \cite{Sel_1971},\cite[Ch.III]{scher72},\cite{sib} A set $\mathfrak F$ is precompact in the space
$C(\mathbb T,\ell_{2})$ if nd only if the following conditions are
fulfilled:
\begin{enumerate}
\item there exists a compact subset $Q$ from $\ell_{2}$ such that
$\varphi(\mathbb T)\subseteq Q$ for every $\varphi \in \mathfrak
F$; \item the family of functions $\mathfrak F$ from $C(\mathbb
T,\ell_{2})$ is uniformly equi-continuous.
\end{enumerate}
\end{lemma}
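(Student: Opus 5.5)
The plan is to reduce the statement to the classical Arzel\`a--Ascoli theorem for the compact-open topology. The metric $d$ from (\ref{eqI_3}) induces uniform convergence on every compact interval $[-L,L]\cap\mathbb T$. Hence a set $\mathfrak F$ is precompact in $(C(\mathbb T,\ell_{2}),d)$ if and only if, for every $L>0$, the restrictions $\mathfrak F|_{[-L,L]}$ are precompact in $C([-L,L],\ell_{2})$ with the sup-norm.

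For one direction I would use a diagonal argument over $L=1,2,\ldots$. Because $d(f_1,f_2)\le \max\{\max_{|t|\le L}\|f_1(t)-f_2(t)\|,\,L^{-1}\}$, a sequence that is Cauchy uniformly on each $[-L,L]$ is Cauchy in $d$. Completeness of $(C(\mathbb T,\ell_2),d)$, stated above, then gives the limit.

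\textbf{Sufficiency.} Assume (1) and (2) hold.
\begin{enumerate}
\item On each compact $[-L,L]$, the family $\mathfrak F|_{[-L,L]}$ takes values in the compact set $Q$.
\item It is equicontinuous, since uniform equicontinuity on $\mathbb T$ restricts to $[-L,L]$.
\item The vector-valued Arzel\`a--Ascoli theorem (for maps from a compact metric space into a complete metric space) shows that $\mathfrak F|_{[-L,L]}$ is precompact.
\item For any sequence $(\varphi_n)\subset\mathfrak F$, extract successively subsequences converging uniformly on $[-1,1]$, $[-2,2]$, and so on. The diagonal subsequence converges uniformly on every compact interval, hence in $d$.
\end{enumerate}
Thus $\mathfrak F$ is precompact.

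\textbf{Necessity.} This is where I expect the real obstacle. For a general family the converse is false in the compact-open topology: a single function with unbounded range forms a compact set. The global conditions (1) and (2) can only be recovered when $\mathfrak F$ is invariant under the shifts $\sigma(h,\cdot)$, for example $\mathfrak F=\{f^{h}:h\in\mathbb R\}$ or $H(f)$, which is how the lemma is used later for the hulls of the forcing terms $f_i$. I would therefore prove necessity under this shift-invariance assumption, as in the cited results of Sell and Shcherbakov.

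\emph{Recovering (1).} Let $K$ be the closure of $\mathfrak F$, which is compact. Evaluation at $0$, $e:K\to\ell_2$ with $e(\varphi)=\varphi(0)$, is continuous in the compact-open topology. So $Q:=e(K)$ is compact, and $\varphi(t)=\varphi^{t}(0)\in Q$ for all $t$.

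\emph{Recovering (2).} I would argue by contradiction. If uniform equicontinuity failed, there would be $\varepsilon_0>0$, $\varphi_n\in\mathfrak F$ and $t_n,s_n$ with $|t_n-s_n|\to0$ and $\|\varphi_n(t_n)-\varphi_n(s_n)\|\ge\varepsilon_0$. Replace $\varphi_n$ by the shift $\psi_n:=\varphi_n^{t_n}\in\mathfrak F$ and set $\tau_n:=s_n-t_n\to0$. This gives $\|\psi_n(0)-\psi_n(\tau_n)\|\ge\varepsilon_0$. By compactness, a subsequence $\psi_n\to\psi$ uniformly on $[-1,1]$. Then continuity of $\psi$ at $0$ together with uniform convergence forces $\|\psi_n(0)-\psi_n(\tau_n)\|\to0$, a contradiction.
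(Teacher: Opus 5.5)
Your proof is correct, and restricting the ``only if'' direction is a genuine correction, not a weakness. As printed, necessity fails: the singleton $\{\varphi\}$ with $\varphi(t)=t\,e_0$ is compact in $(C(\mathbb R,\ell_2),d)$, but $\varphi(\mathbb R)$ lies in no compact set. Likewise $\varphi(t)=\sin(t^2)\,e_0$ gives a compact singleton that violates (2).

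The paper gives no proof of Lemma~\ref{lAPF02}; it only cites Sell, Shcherbakov and Sibirsky. The content behind those citations consists of the two pieces you reconstruct. For sufficiency, Arzel\`a--Ascoli on compact intervals plus a diagonal argument. For necessity, the classical criterion that a function is Lagrange stable iff it is uniformly continuous with precompact range, proved by your evaluation-map and shift argument. So your route is the standard one.

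Your restricted version is also exactly what the paper needs. Every application of necessity is to a shift-invariant family: $\{\mathfrak f^{h}\}$ in Lemma~\ref{lF3}, and $H(K)$ in Lemmas~\ref{lF4} and~\ref{l2}. The non-invariant family $\{\nu_n\}$ in Lemma~\ref{lUSC1} uses only sufficiency. You prove sufficiency without invariance, and it really needs only equicontinuity on compact intervals and pointwise relative compactness. A formulation covering all uses at once is: $\mathfrak F$ satisfies (1)--(2) iff $\{\varphi^{h}:\ \varphi\in\mathfrak F,\ h\in\mathbb T\}$ is precompact.

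One small detail: for $\mathbb T=\mathbb R_+$, label the points so that $t_n\le s_n$. Then $\psi_n=\varphi_n^{t_n}$ is a shift by a nonnegative time and stays in the family.
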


In this paper we study the finite-dimensional approximations and
upper semi-continuous convergence of compact global attractors of
the systems
\begin{equation}\label{eqI1}
u_{i}'=\nu (u_{i-1}-2u_i+u_{i+1})-\lambda u_{i}+F(u_i)+f_{i}(t)\
(i\in \mathbb Z),
\end{equation}
where $\lambda >0$, $F\in C(\mathbb R, \mathbb R)$ and $f\in
C(\mathbb R,\ell_{2})$ ($f(t):=(f_{i}(t))_{i\in \mathbb Z}$ for
all $t\in \mathbb R$).

The system (\ref{eqI1}) can be considered as a discrete (see, for
example, \cite{BLW_2001}, \cite{HK_2023} and the bibliography
therein) analogue of a reaction-diffusion equation in $\mathbb R$:
\begin{equation}\label{eqI1.1}
 \frac{\partial{u}}{\partial{t}} = D\frac{\partial^{2}{u}}{\partial^{2}{x}}-\lambda u + F(u) +
 f(t,x),\nonumber
\end{equation}
where grid points are spaced $h$ distance apart and $\nu =
D/h^{2}$.

This study continues the first author's works devoted to the study
of compact global attractors of non-autonomous dynamical systems
\cite{Che_2015} and compact global attractors of lattice dynamical
systems \cite{BLW_2001} (autonomous systems), compact pullback
attractors \cite{HK_2023} (for non-autonomous systems) and compact
(forward) global attractors and almost periodic solutions for
nonautonomous lattice dynamical systems \cite{CS_2025,CS_2026}.

In this paper we study the problem of approximation of compact
global attractors of nonautonomous lattice dynamical systems.
Following \cite{BLW_2001} (see also \cite{HK_2023}) we consider
finite-dimensional systems which are approximations to the
original infinite-dimensional equation (\ref{eqI1}). The existence
of compact global attractors for these approximative systems is
established and also we prove that the global attractors for the
approximative systems converge to the global attractor of
(\ref{eqI1}) with respect to (w.r.t.) Hausdorff's semi-distance (upper
semi-continuity).

The paper is organized as follows.

In the second section we collect some notion and facts from the
theory of dynamical systems (both autonomous and nonautonomous)
which we will use in this paper.

The third section is dedicated to the construction of the
finite-dimensional approximations for the lattice dynamical
systems (\ref{eqI1}).

In the fourth section we establish some properties of the shift
dynamical systems related with the nonautonomous lattice dynamical
systems and their finite-dimensional approximations.

The fifth section is dedicated to the construction of the cocycles
generated by nonautonomous lattice dynamical systems (\ref{eqI1})
and their finite-dimensional approximations.

In the sixth section we establish the uniform dissipativity and
asymptotically compactness of the cocycles generated by
nonautonomous lattice dynamical systems (\ref{eqI1}).

The seventh section is dedicated to the study the problem of upper
semi-continuous convergence of the compact global attractors for
finite-dimensional approximations for (\ref{eqI1}).

\section{Preliminary}\label{Sec2}

\begin{definition}\label{def1.0.1}
Triplet $(X,\mathbb{T},\pi)$, where $\pi:\mathbb{T}\times X\to X$
is a continuous mapping satisfying the following conditions:
$\pi(0,x)=x$ and $\pi(s,\pi(t,x))=\pi(s+t,x)$ is called a
dynamical system.
\end{definition}

If $\mathbb{T}=\mathbb{R}$ $(\mathbb{R_{+}})$, then
$(X,\mathbb{T},\pi)$ is called a group (semi-group) dynamical
system.

\begin{definition}\label{def1.2.2*}
The system $(X,\mathbb{T},\pi)$ is called compact
dissipative\index{compact dissipative} if there exist a compact
subset $K\subseteq X$ such that
\begin{equation}\label{eq1.2.1}
\lim \limits_{t\to+\infty}\beta(\pi(t,M),K)=0;
\end{equation}
for every compact subset $M$ from $X$.
\end{definition}

Let $M$ be a subset of $X$ and denote by
\begin{equation}\label{eq1.2.3}
\omega(M):=\bigcap\limits_{t\ge0}\overline{\bigcup
\limits_{\tau\ge t}\pi(\tau ,M)}.\nonumber
\end{equation}

Let $(X,\mathbb{T},\pi)$ be compact dissipative and $K$ is a
nonempty compact set figuring in the relation (\ref{eq1.2.1}).
Then $\omega(K)\subseteq K$ and, consequently
\cite[Ch.I]{Che_2015},
\begin{equation}\label{eq1.2.4}
J=\cap\{\pi^{t}M\ |\ t\in \mathbb{T}\}.
\end{equation}
The set $J$ does not depend on the choice of the set $K$ (see
\cite[Ch.I]{Che_2015}).

\begin{definition}\label{def1.2.3*}
The set $J$ defined by the equality (\ref{eq1.2.4}), according to
\cite{Zhi72} we will call the center of Levinson of the compact
dissipative dynamical system $(X,\mathbb{T},\pi)$.
\end{definition}

\begin{theorem}\label{t1.2.4}\cite[Ch.I]{Che_2015}
Let $(X,\mathbb{T},\pi)$ be compact dissipative system and let $J$
be its center of Levinson. Then:
\begin{enumerate}
\item $J$ is compact invariant set; \item $J$ is orbitally stable;
\item $J$ is the attractor of the family of all compacts of $X$;
\item $J$ is the maximal compact invariant set in
$(X,\mathbb{T},\pi)$.
\end{enumerate}
\end{theorem}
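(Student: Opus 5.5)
The plan is to work with $J:=\omega(K)$, where $K$ is the compact set from (\ref{eq1.2.1}). First I check that $\omega(K)\subseteq K$ and that this agrees with (\ref{eq1.2.4}), i.e. $J=\bigcap_{t\ge 0}\pi^{t}K$. From this description the four assertions follow by the standard arguments for $\omega$-limit sets of a compact set that is attracted to a compact set. I take as given the facts the paper already states, namely $\omega(K)\subseteq K$ and independence of $J$ from the choice of $K$.

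\emph{Compactness and invariance.} Since $\beta(\pi(t,K),K)\to 0$ and $K$ is compact, every sequence $\pi(t_n,x_n)$ with $x_n\in K$ and $t_n\to+\infty$ has a convergent subsequence. Hence $\omega(K)$ is nonempty and compact; it is closed and contained in $K$. Invariance $\pi^{t}J=J$ for $t\ge 0$ comes in two halves:
\begin{itemize}
\item the inclusion $\pi^tJ\subseteq J$ is immediate from continuity;
\item for the reverse inclusion, given $p=\lim\pi(t_n,x_n)$, I consider $\pi(t_n-t,x_n)$. By the precompactness just noted it has a limit point $q\in J$, and continuity gives $\pi(t,q)=p$.
\end{itemize}

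\emph{Attraction of compacts.} Fix a compact $M$. By (\ref{eq1.2.1}) the family $\pi(t,M)$, $t\ge 0$, is eventually close to $K$, so the same subsequence argument shows $\omega(M)$ is nonempty, compact and contained in $K$. I then argue by contradiction. If $\beta(\pi(t_n,M),J)\ge\varepsilon$ along $t_n\to\infty$, then writing $t_n=s+ (t_n-s)$ with $\pi(t_n-s,M)$ already near $K$ shows that the limit points lie in $\omega(K)$, which is a contradiction. To make this rigorous I would use compact dissipativity on the compact set $\overline{\bigcup_{t\ge T}\pi(t,M)}$; this set is compact by the asymptotic compactness above. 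Alternatively, I would show directly that $\omega(M)\subseteq\omega(K)$, because $\omega(M)$ is compact, invariant and attracted by $K$. \emph{Maximality}: any compact invariant set $M$ satisfies $M=\pi^{t}M$, and $\beta(\pi^tM,J)\to0$, so $M\subseteq J$.

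\emph{Orbital stability.} For each $\varepsilon>0$ I need $\delta>0$ such that $\rho(x,J)<\delta$ implies $\rho(\pi(t,x),J)<\varepsilon$ for all $t\ge 0$. I expect this to be the main obstacle, since attraction is only for compact sets, not uniform on neighborhoods. I would argue by contradiction, with $x_n\to J$ and $\rho(\pi(t_n,x_n),J)\ge\varepsilon$. The set $\{x_n\}\cup J$ is compact, so by attraction of compacts $t_n$ must be bounded. Passing to subsequences $t_n\to t_0$ and $x_n\to x_0\in J$, continuity gives $\pi(t_0,x_0)\notin J$, which contradicts invariance of $J$.
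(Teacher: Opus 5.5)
The paper gives no proof of this theorem and quotes it from \cite[Ch.I]{Che_2015}. Your proposal is correct and is essentially the standard argument given there. You take $J=\omega(K)=\bigcap_{t\ge 0}\pi^{t}K$ and get compactness and invariance from the asymptotic compactness of $\pi(t,K)$. You then show $\omega(M)\subseteq K$, hence $\omega(M)\subseteq J$, for every compact $M$. Maximality and orbital stability (first $t_n$ bounded, then continuity plus invariance) follow by the contradiction arguments you describe. Of your two routes to attraction, the first one you sketch is already rigorous and needs no detour through compactness of $\overline{\bigcup_{t\ge T}\pi(t,M)}$: a limit $p=\lim\pi(t_n,x_n)$ lies in $\pi^{s}K$ for every $s\ge 0$, hence in $\bigcap_{s\ge 0}\pi^{s}K=J$.
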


\begin{definition}\label{def1.0.17}
Let $(X,\mathbb{T}_1,\pi)$ and $(Y,\mathbb{T}_2,\sigma)$ \
($\mathbb{R_{+}}\subseteq\mathbb{T}_1\subseteq\mathbb{T}_2
\subseteq\mathbb{R}$) be two dynamical systems. A mapping $h:X\to
Y$ is called a homomorphism of dynamical system
$(X,\mathbb{T}_1,\pi)$ on $(Y,\mathbb{T}_2,\sigma)$, if $h$ is
continuous and $h(\pi(x,t))=\sigma(h(x),t)$ ( for all
$t\in\mathbb{T}_1,\ x\in X$).
\end{definition}

\begin{definition}\label{def1.0.18}
The triplet $\langle
(X,\mathbb{T}_1,\pi),\,(Y,\mathbb{T}_2,\sigma), \,h\rangle $,
where $h$ is a homomorphism from $(X,\mathbb{T}_1,\pi)$ on
$(Y,\mathbb{T}_2,\sigma)$, is called a non-autonomous dynamical
system.
\end{definition}

\begin{definition}\label{def1.0.19}
The triplet $\langle W, \varphi, (Y,\mathbb{T}_2,\sigma)\rangle $
(or shortly $\varphi$), where $(Y,\mathbb{T}_2,\sigma)$ is a
dynamical system on $Y$,  $W$ is a complete metric space and
$\varphi$ is a continuous mapping from $\mathbb{T}_1\times W\times
Y$ in $W$, possessing the following conditions:
\begin{enumerate}
\item[a.] $\varphi(0,u,y)=u$ (for all $u\in W, y\in Y)$; \item[b.]
$\varphi(t+\tau,u,y)= \varphi(\tau,\varphi(t,u,y),\sigma(t,y))$
(for all $t,\tau\in\mathbb{T}_1,\, u\in W, y\in Y),$
\end{enumerate}
is called \cite{Arn,Sel_1971} a cocycle on
$(Y,\mathbb{T}_2,\sigma)$ with the fibre $W$.
\end{definition}

\begin{definition}\label{def1.0.20}
Let $X:= W\times Y$ and we define a mapping $\pi: X\times
\mathbb{T}_1\to X$ as following:
$\pi((u,y),t):=(\varphi(t,u,y),\sigma(t,y))$ (i.e.
$\pi=(\varphi,\sigma)$) for all $(t,u,y)\in \mathbb T_{1}\times
W\times Y$. Then it easy to see that $(X,\mathbb{T}_1,\pi)$ is a
dynamical system on $X$ which is called a skew-product dynamical
system \cite{Arn,Sel_1971} and $h=pr_2:X\to Y$ is a homomorphism
from $(X,\mathbb{T}_1,\pi)$ on $(Y,\mathbb{T}_2,\sigma)$ and,
consequently, $\langle (X,\mathbb{T}_1,\pi),\,
(Y,\mathbb{T}_2,\sigma), h\rangle $ is a non-autonomous dynamical
system.
\end{definition}

Thus, if we have a cocycle $\langle W, \varphi, (Y,\mathbb{T}_2,
\sigma)\rangle $ on dynamical system $(Y,\mathbb{T}_2,\sigma)$
with the fibre $W$, then it generates a non-autonomous dynamical
system $\langle (X,\mathbb{T}_1,\pi),$\ $(Y,\mathbb{T}_2,\sigma),
h\rangle $ ($X:= W\times Y$), which is called a non-autonomous
dynamical system, generated by cocycle\index{non-autonomous
dynamical system, generated by cocycle} $\langle W, \varphi,
(Y,\mathbb{T}_2,\sigma)\rangle $ on $(Y,\mathbb{T}_2,\sigma)$.

Non-autonomous dynamical systems (cocycles) play a very important
role in the study of non-autonomous evolutionary differential
equations. Under appropriate assumptions every non-autonomous
differential equation generates some cocycle (non-autonomous
dynamical system). Below we give an example of this type.

Let $(X,\mathbb{T},\pi)$ be a dynamical system on $X,\,Y$ be a
complete pseudo metric space and $\mathcal{P}$ be a family of
pseudo metrics on $Y$. Denote by $C(X,Y)$ the family of all
continuous functions $f:X\to Y$ equipped with the compact-open
topology. This topology is given by the following family of pseudo
metrics $\{d_{K}^p\}\ (p\in\mathcal{P},\;\,K\in C(X)),$ where $$
d_{K}^p(f,g):=\sup\limits_{x\in K}p(f(x),g(x))$$ and $C(X)$ a
family of all compact subsets of $X$. We define for all
$\tau\in\mathbb{T}$ the mapping $\sigma_{\tau}:C(X,Y)\to C(X,Y)$
by the following equality: $(\sigma_{\tau}f)(x):=f(\pi(\tau,x))$ \
$(x\in X)$. We note that the family of
mappings$\{\sigma_{\tau}:\tau\in\mathbb{T}\}$ possesses the
following properties:
\begin{enumerate}
\item[a.] $\sigma_0=id_{C(X,Y)}$; \ \ \item[b.]
$\sigma_{\tau_1}\circ\sigma_{\tau_2}=\sigma_{\tau_1+\tau_2}$ for
all $\tau_1, \tau _2 \in \mathbb T$; \ \ \item[c.] $\sigma_{\tau}$
is continuous for all $\tau \in \mathbb T$.
\end{enumerate}

\begin{lemma}\label{l1.0.5} \cite[Ch.I]{Che_2015}
The mapping $\sigma:\mathbb{T}\times C(X,Y)\to C(X,Y),$ defined by
the equality $\sigma(\tau,f):=\sigma_{\tau}f$ \ $(f\in
C(X,Y),\;\tau\in\mathbb{T}),$ is continuous.
\end{lemma}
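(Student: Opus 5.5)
We have to show that $\sigma:\mathbb T\times C(X,Y)\to C(X,Y)$, $\sigma(\tau,f)=f\circ\pi(\tau,\cdot)$, is jointly continuous. The approach is to check continuity at a fixed point $(\tau_0,f_0)$ against one basic pseudo-metric $d^p_K$ of the compact-open topology at a time. Since the topology is generated by the family $\{d^p_K\}$, it suffices to prove the following: for every $p\in\mathcal P$, every compact $K\subseteq X$ and every $\varepsilon>0$, there exist $\delta>0$ and a neighbourhood $U$ of $f_0$ such that $|\tau-\tau_0|<\delta$ and $f\in U$ imply $d^p_K(\sigma_\tau f,\sigma_{\tau_0}f_0)<\varepsilon$.

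\textbf{Splitting the estimate.} By the triangle inequality for $p$,
$$
d^p_K(\sigma_\tau f,\sigma_{\tau_0}f_0)\le \sup_{x\in K}p\big(f(\pi(\tau,x)),f_0(\pi(\tau,x))\big)+\sup_{x\in K}p\big(f_0(\pi(\tau,x)),f_0(\pi(\tau_0,x))\big).
$$
Fix a compact interval $I=[\tau_0-1,\tau_0+1]\cap\mathbb T$ and put $\tilde K:=\pi(I\times K)$. This set is compact, being the continuous image of the compact set $I\times K$.

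\textbf{First term.} For $\tau\in I$ the first term is at most $d^p_{\tilde K}(f,f_0)$. We therefore take $U:=\{f:\ d^p_{\tilde K}(f,f_0)<\varepsilon/2\}$, which is an open set in the compact-open topology, and this makes the first term smaller than $\varepsilon/2$.

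\textbf{Second term (main step).} The second term needs a uniformity argument. The map $(\tau,x)\mapsto f_0(\pi(\tau,x))$ is continuous on the compact set $I\times K$, hence uniformly continuous with respect to $p$. The one point needing care is that $Y$ carries only a family of pseudo-metrics, so we should argue with the single pseudo-metric $p$. We argue by contradiction. Suppose there are $\tau_n\to\tau_0$ and $x_n\in K$ with $p(f_0(\pi(\tau_n,x_n)),f_0(\pi(\tau_0,x_n)))\ge\varepsilon/2$. Passing to a subsequence, $x_n\to x_0\in K$ by compactness of $K$. By continuity of $\pi$ and $f_0$, both $f_0(\pi(\tau_n,x_n))$ and $f_0(\pi(\tau_0,x_n))$ converge to $f_0(\pi(\tau_0,x_0))$. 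Since $p$ is continuous on $Y$, the quantity $p(f_0(\pi(\tau_n,x_n)),f_0(\pi(\tau_0,x_n)))$ then tends to $0$, a contradiction. Hence there is $\delta\in(0,1]$ such that $|\tau-\tau_0|<\delta$ makes the second term smaller than $\varepsilon/2$.

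\textbf{Conclusion.} Combining the two bounds gives $d^p_K(\sigma_\tau f,\sigma_{\tau_0}f_0)<\varepsilon$ whenever $|\tau-\tau_0|<\delta$ and $f\in U$. This proves joint continuity of $\sigma$ at $(\tau_0,f_0)$, and since the point was arbitrary, $\sigma$ is continuous. Properties a.–c. of the family $\{\sigma_\tau\}$ are not needed for this argument; they only serve to make $\sigma$ a dynamical system.
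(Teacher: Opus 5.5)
Your proof is correct. The paper gives no argument of its own for this lemma; it only cites \cite[Ch.I]{Che_2015}. Your route is the standard proof of this fact: split $d^p_K(\sigma_\tau f,\sigma_{\tau_0}f_0)$ by the triangle inequality, absorb the variation in $f$ into the single pseudo-metric $d^p_{\tilde K}$ on the compact set $\tilde K=\pi(I\times K)$, and control the variation in $\tau$ by uniform continuity of $f_0\circ\pi$ on $I\times K$.

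Two points you use tacitly are both fine. First, extracting $x_n\to x_0$ needs compact subsets of $X$ to be sequentially compact. This holds because $X$ is a metric space in this setting: the paper uses Hausdorff semi-distances $\beta$ on $X$. Second, in the contradiction step, a supremum $\ge\varepsilon/2$ does give points $x_n\in K$ with $p\bigl(f_0(\pi(\tau_n,x_n)),f_0(\pi(\tau_0,x_n))\bigr)\ge\varepsilon/2$. The reason is that this continuous function of $x$ attains its supremum on the compact set $K$.
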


\begin{example}\label{ex1.0.9}
{\rm Let $X:=\mathbb{T}\times W$, where $W$ some metric space and
by $(X,\mathbb{T},\pi)$ we denote a dynamical system on $X$
defined in the following way: $\pi(t,(s,w)):=(s+t,w)$. Using the
general method proposed above we can define on $C(\mathbb{T}\times
W,Y)$ a dynamical system of translations $(C(\mathbb{T}\times
W,Y),\mathbb{T},\sigma)$.}
\end{example}

\begin{example}\label{ex1.1.13}
{\rm Let us consider a differential equation
\begin{equation}
u'=f(t,u),\label{eq1.0.6}
\end{equation}
where  $f\in C(\mathbb{R}\times \mathfrak B,\mathfrak B)$,
$\mathfrak B$ is a Banach space with the norm $\|\cdot\|$. Along
with equation (\ref{eq1.0.6}) we consider its $H$-class
\cite{Bro79,Dem67,Lev-Zhi,scher72,scher85}, i.e., the family of
equations
\begin{equation}
v'=g(t,v),\label{eq1.0.7}
\end{equation}
where $g\in H(f)=\overline{\{f_{\tau}:\tau\in \mathbb{R}\}}$,
$f_{\tau}(t,u)=f(t+\tau,u)$ for all $(t,u)\in \mathbb{R}\times
\mathfrak B$ and by bar we denote the closure in
$C(\mathbb{R}\times \mathfrak B,\mathfrak B)$. We will suppose
also that the function $f$ is regular, i.e., for every equation
(\ref{eq1.0.7}) the conditions of existence, uniqueness and
extendability on $\mathbb{R}_{+}$ are fulfilled. Denote by
$\varphi(\cdot,v,g)$ the solution of the equation (\ref{eq1.0.7}),
passing through the point $v\in \mathfrak B$ at the initial moment
$t=0$. Then it is well defined a mapping
$\varphi:\mathbb{R}_{+}\times \mathfrak B\times H(f)\to \mathfrak
B$, verifying the following conditions (see, for example,
\cite{Bro79,Sel_1971}):
\begin{enumerate}
\item[$1)$] $\varphi(0,v,g)=v$ for all $v\in \mathfrak B$ and
$g\in H(f)$; \item[$2)$]
$\varphi(t,\varphi(\tau,v,g),g_{\tau})=\varphi(t+\tau,v,g)$ for
every $ v\in \mathfrak B$, $g\in H(f)$ and $t,\tau \in
\mathbb{R}_{+}$; \item[$3)$] the mapping
$\varphi:\mathbb{R}_{+}\times \mathfrak B\times H(f)\to \mathfrak
B$ is continuous.
\end{enumerate}

Denote by $Y:=H(f)$ and $(Y,\mathbb{R},\sigma)$ the dynamical
system of translations on $Y$, induced by dynamical system of
translations $(C(\mathbb{R}\times \mathfrak B,\mathfrak
B),\mathbb{R},\sigma)$. The triplet $\langle \mathfrak B,\varphi,
(Y,\mathbb{R},\sigma)\rangle $ is a cocycle on
$(Y,\mathbb{R},\sigma)$ with the fibre $\mathfrak B$. Thus the
equation (\ref{eq1.0.6}) generates a cocycle $\langle \mathfrak
B,\varphi, (Y,\mathbb{R},\sigma)\rangle $ and a non-autonomous
dynamical system $\langle (X,\mathbb{R}_{+},\pi),\,
(Y,\mathbb{R},\sigma), h\rangle $, where $X:= \mathfrak B\times
Y$, $\pi:=(\varphi,\sigma)$ and $h:=pr_2:X\to Y$.}
\end{example}

\begin{definition}\label{def2.7.3}
A cocycle $\varphi $ over $(Y,\mathbb T,\sigma)$ with the fiber
$W$ is said to be compactly dissipative (respectively, uniformly
compact dissipative) if there exits a nonempty compact $K
\subseteq W$ such that
\begin{equation}\label{eqGA2.7.8}
\lim_{t \to + \infty} \beta (U(t,y)M,K) =0
\end{equation}
for any  $M \in C(W)$ and $y\in Y$ (respectively, uniformly with
respect to $y\in Y$).
\end{definition}

\begin{theorem}\label{thGA1}\cite{Che_1997},\ \cite[Ch.II]{Che_2015} Let $Y$ be a compact metric space,
then the following statements are equivalent:
\begin{enumerate}
\item the cocycle $\langle W,\varphi,(Y,\mathbb T,\sigma)\rangle$
is uniformly compactly dissipative; \item the skew-product
dynamical system $(X,\mathbb T,\pi)$ ($X:=W\times Y, \pi
=(\varphi,\sigma)$) is compact dissipative.
\end{enumerate}
\end{theorem}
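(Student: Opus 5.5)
We prove the two implications separately. Throughout, $U(t,y)M:=\varphi(t,M,y)$, $X=W\times Y$ and $\pi=(\varphi,\sigma)$. We use the elementary fact that the Hausdorff semi-deviation $\beta$ in $X$ is controlled by the semi-deviations in the factors. Precisely, for the product metric $d((u,y),(u',y'))=\max\{\rho(u,u'),d_Y(y,y')\}$ (any equivalent product metric works the same way):
\[
\beta(A\times B,K\times Y)\le\beta(A,K),
\]
and, for $A\subseteq X$,
\[
\beta(A,K\times Y)=\sup_{(u,y)\in A}\rho(u,K).
\]

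\textbf{(1)$\Rightarrow$(2).} Let $K\subseteq W$ be the compact set from Definition \ref{def2.7.3}, now with uniform convergence in $y\in Y$. Put $\mathcal K:=K\times Y$; it is compact because $Y$ is compact. Let $\mathcal M\subseteq X$ be compact. Then $M:=pr_1(\mathcal M)$ is compact in $W$ and $\mathcal M\subseteq M\times Y$. Hence
\[
\pi(t,\mathcal M)\subseteq\bigcup_{y\in Y}U(t,y)M\times\{\sigma(t,y)\},
\]
and therefore
\[
\beta(\pi(t,\mathcal M),\mathcal K)\le\sup_{y\in Y}\beta(U(t,y)M,K).
\]
The right-hand side tends to $0$ as $t\to+\infty$ by uniform compact dissipativity. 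So $(X,\mathbb T,\pi)$ is compact dissipative.

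\textbf{(2)$\Rightarrow$(1).} Let $(X,\mathbb T,\pi)$ be compact dissipative, and let $J$ be its Levinson center. By Theorem \ref{t1.2.4}, $J$ is compact and attracts every compact subset of $X$. Set $K:=pr_1(J)$, which is compact in $W$. Fix a compact $M\subseteq W$. Then $\mathcal M:=M\times Y$ is compact, so
\[
\beta(\pi(t,\mathcal M),J)\to0 .
\]
Since $J\subseteq K\times Y$, we get
\[
\sup_{y\in Y}\beta(U(t,y)M,K)=\beta\bigl(\pi(t,M\times Y),K\times Y\bigr)\le\beta(\pi(t,\mathcal M),J)\to0 .
\]
This is exactly uniform compact dissipativity of the cocycle.

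\textbf{Main point.} Compactness of $Y$ is essential in both directions. It makes $K\times Y$ compact in (1)$\Rightarrow$(2), and it makes $M\times Y$ a compact set that the attractor must absorb in (2)$\Rightarrow$(1). The compactness of $M\times Y$ is what converts attraction of compact sets in $X$ into uniformity in $y\in Y$. No further difficulty arises. The only care needed is to fix a product metric on $X$ so that the distance from a point $(u,y')$ to $K\times Y$ equals $\rho(u,K)$; with the max-metric this holds exactly, and for other product metrics an inequality with a constant suffices.
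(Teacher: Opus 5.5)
The paper states this theorem without proof (it is quoted from Cheban's earlier work), so there is no in-text argument to compare against. Your proof is correct and is the standard argument: $K\times Y$ serves as the compact attracting set for $\pi$ in one direction. In the other direction, the $W$-projection of a compact attracting set of $\pi$ gives the required $K$. Compactness of $Y$ enters exactly through the compactness of $K\times Y$ and of $M\times Y$, as you say.

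One minor simplification: in (2)$\Rightarrow$(1) you do not need the Levinson center or Theorem~\ref{t1.2.4}. The projection $pr_1(\mathcal K)$ of the compact set $\mathcal K$ from Definition~\ref{def1.2.2*} works verbatim. Nonemptiness of the resulting $K$ is automatic, since the empty set cannot attract a nonempty compact set.
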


\begin{definition}\label{defGA1} A non-autonomous set $I=\{I_{y}|\ y\in
Y\}$ is said to be a compact global attractor for the cocycle
$\langle W,\varphi, (Y,\mathbb T,\sigma)\rangle$ if it possesses
the following properties:
\begin{enumerate}
\item the set $\mathcal I :=\bigcup \{I_{y}|\ y\in Y\}$ is
pre-compact; \item $\{I_{y}|\ y\in Y\}$ is invariant, i.e.,
$\varphi(t,I_{y},y)=I_{\sigma(t,y)}$ for any $(t,y)\in \mathbb T
\times Y$; \item
\begin{equation}\label{eqGA9}
\lim\limits_{t\to +\infty}\sup\limits_{y\in
Y}\beta(\varphi(t,M,y),\mathcal I)=0\nonumber
\end{equation}
for any $M\in C(W)$, where $\mathcal I=\bigcup \{I_{y}|\ y\in
Y\}$.
\end{enumerate}
\end{definition}

\begin{theorem}\label{thGA2} \cite{Che_2022}, \cite[Ch.II]{Che_2024}
Let $Y$ be a compact metric space, $Y$ be invariant (i.e.,
$\sigma(t,Y)=Y$ for any $t\in \mathbb T$) and $\varphi$ be a
cocycle over $(Y,\mathbb T,\sigma)$ with the fiber $W$. If the
cocycle $\varphi$ is uniformly compactly dissipative, then it has
a compact global attractor.
\end{theorem}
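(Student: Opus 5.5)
The plan is to obtain the attractor from the center of Levinson of the skew-product system. First, by Theorem \ref{thGA1}, since $Y$ is compact and the cocycle $\varphi$ is uniformly compactly dissipative, the skew-product dynamical system $(X,\mathbb T_1,\pi)$ with $X:=W\times Y$ and $\pi=(\varphi,\sigma)$ is compact dissipative. Let $J\subseteq X$ be its center of Levinson. By Theorem \ref{t1.2.4}, $J$ is compact and invariant, and it attracts every compact subset of $X$. I then define the fibres
$$I_y:=\{u\in W\ |\ (u,y)\in J\}=pr_1\bigl(J\cap (W\times\{y\})\bigr)\qquad (y\in Y),$$
and check the three properties of Definition \ref{defGA1} in turn.

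Property (1) is immediate: $\mathcal I=\bigcup_{y\in Y}I_y=pr_1(J)$ is the continuous image of a compact set, hence compact.

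Property (2) uses the invariance $\pi(t,J)=J$. The inclusion $\varphi(t,I_y,y)\subseteq I_{\sigma(t,y)}$ follows from $\pi(t,J)\subseteq J$. For the reverse inclusion, take $v\in I_{\sigma(t,y)}$, so $(v,\sigma(t,y))\in J=\pi(t,J)$. Then there is some $(u,y')\in J$ with $\varphi(t,u,y')=v$ and $\sigma(t,y')=\sigma(t,y)$.
\begin{itemize}
\item If $\mathbb T_2=\mathbb R$, then $\sigma(t,\cdot)$ is injective, so $y'=y$ and $v\in\varphi(t,I_y,y)$. This is the main setting of the paper, with $Y=H(f)$ and $\sigma$ the Bebutov shift, which is a group.
\item If $\sigma$ is only a semigroup, the fibre-wise invariance as stated would need the standard replacement of $J$ by the union of full trajectories lying in $J$, projected fibrewise. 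This is where the invariance hypothesis $\sigma(t,Y)=Y$ enters.
\end{itemize}
I expect this injectivity issue to be the main technical obstacle. I will handle it by relying on the group case, or by invoking the cited result of \cite{Che_2022} in general.

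Property (3) is the attraction estimate. Let $M\in C(W)$. Then $M\times Y$ is compact in $X$, so $\beta(\pi(t,M\times Y),J)\to0$ as $t\to+\infty$. For $u\in M$ and $y\in Y$, pick $(w,z)\in J$ close to $(\varphi(t,u,y),\sigma(t,y))$; then $w\in\mathcal I$ is close to $\varphi(t,u,y)$. Hence
$$\sup_{y\in Y}\beta(\varphi(t,M,y),\mathcal I)\le\beta(\pi(t,M\times Y),J)\to0,$$
provided the metric on $X$ is chosen to dominate the metric on $W$ (for instance, the sum metric).

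Combining (1)–(3), $\{I_y\}_{y\in Y}$ is a compact global attractor for the cocycle $\varphi$.
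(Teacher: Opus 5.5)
The paper gives no proof of this statement; it is quoted from \cite{Che_2022}, \cite{Che_2024}. Definition \ref{defGA}, which builds the fibres as $\omega_y(K)$ via $\sigma(-\tau,y)$, indicates that the cited setting has a two-sided base $(Y,\mathbb R,\sigma)$. In that setting your argument is complete and follows the standard route. Theorems \ref{thGA1} and \ref{t1.2.4} supply the Levinson center $J$ of the skew-product, and your verification of (1)--(3) is correct. Your fibres $pr_1(J\cap(W\times\{y\}))$ coincide with the sets $\omega_y(K)$ of Definition \ref{defGA}: one inclusion follows from $J=\pi(t,J)$, $J\subseteq K\times Y$ and $\sigma(t,\cdot)^{-1}=\sigma(-t,\cdot)$; the other from $\omega_y(K)\times\{y\}\subseteq\omega(K\times Y)=J$. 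This is also the only case the paper uses ($Y=H(\mathfrak{f})$ or $H(\mathfrak{f}^{n})$ with the Bebutov shift), and there $\sigma(t,Y)=Y$ is automatic.

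The genuine gap is your second bullet. Replacing $J$ by the union of full trajectories lying in $J$ changes nothing, because $\pi(t,J)=J$ for all $t\ge0$ already puts every point of $J$ on such a trajectory. Falling back on \cite{Che_2022} is circular. More seriously, no repair exists: with the exact invariance required in Definition \ref{defGA1}, the statement is false when $\sigma(t,\cdot)$ is onto but not one-to-one.

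Here is a counterexample. Let $W=\mathbb R$, and let $Y$ be the set of $1$-Lipschitz functions $y:\mathbb R_+\to[0,1]$ with the compact-open topology (compact by Arzel\`a--Ascoli). Put $\sigma(t,y):=y(\cdot+t)$ for $t\ge0$, which is onto (extend by a constant on the left). Let $\varphi(t,u,y):=e^{-t}u+c_t(y)$ with $c_t(y):=\int_0^t e^{-(t-s)}y(s)\,ds$, the cocycle of $u'=-u+y(t)$; it is uniformly compactly dissipative with $K=[0,1]$. Suppose $\{I_y\}$ were a compact global attractor with $\mathcal I\subseteq[-R,R]$. Then (3) forces $I_z\neq\emptyset$ for some $z$. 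For every $y$ with $\sigma(t,y)=z$, (2) gives both $I_y\neq\emptyset$ and $e^{-t}I_y+c_t(y)=I_z$. Hence $|c_t(y)-c_t(y')|\le 2Re^{-t}$ for any two such $y,y'$. But for $t\ge2$ there are two such preimages with $y\le y'$, $y=0$ and $y'=1$ on $[0,t-2]$: interpolate monotonically to $z(0)$ on $[t-2,t-1]$, stay at $z(0)$ on $[t-1,t]$, and continue by $z(\cdot-t)$. Then $c_t(y')-c_t(y)\ge e^{-2}-e^{-t}$, a contradiction for large $t$.

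You should therefore state explicitly that the base is a group, as Definition \ref{defGA} presupposes, and drop the semigroup bullet. Drop also the claim that this is where $\sigma(t,Y)=Y$ enters. For a semigroup base that hypothesis only gives $pr_2(J)=Y$ (nonempty fibres), never the injectivity your reverse inclusion needs.
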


\begin{definition}\label{defGA} Let $\langle W,\varphi,(Y,\mathbb T,\sigma)\rangle
$ be compactly dissipative, $K$ be the nonempty compact subset of
$W$ appearing in the equality (\ref{eqGA2.7.8}) and
$$
I_{y}=\omega_{y}(K):=\bigcap_{t\ge 0} \overline{\bigcup_{\tau\ge
t} \varphi(\tau,K,\sigma(-\tau,y))}
$$
for any $y\in Y$. The family of compact subsets $\{I_y|\ y\in Y\}$
is said to be a Levinson center (compact global attractor) of
non-autonomous (cocycle) dynamical system $\langle
W,\varphi,(Y,\mathbb T,\sigma)\rangle $.
\end{definition}

\begin{remark}\label{remGA} According to \cite{Che_2022} (see also \cite[Ch.II]{Che_2024}) by
Definition \ref{defGA} the notion of Levinson center (compact
global attractor) for non-autonomous (cocycle) dynamical system
$\langle W,\varphi,(Y,\mathbb T,\sigma)\rangle $ is well defined.
\end{remark}

\begin{definition}\label{defCGA1} A cocycle $\varphi$ is said to be
dissipative if there exists a bounded subset $K\subset \mathfrak
B$ such that for every bounded subset $B\subset \ \mathfrak B$
there exists a positive number $L=L(B)$ such that
$\varphi(t,B,Y)\subseteq K$ for all $t\ge L(B)$, where
$\varphi(t,B,Y):=\{\varphi(t,u,y)|\ (u,y)\in B\times Y\}$.
\end{definition}

\begin{theorem}\label{thCGA1} \cite[Ch.II]{Che_2024} Assume that the metric space $Y$ is
compact and the cocycle $\langle \mathfrak B,\varphi,(Y,\mathbb
R,\sigma)\rangle$ is dissipative and asymptotically compact.

Then the cocycle $\varphi$ has a compact global attractor.
\end{theorem}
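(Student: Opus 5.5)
The plan is to reduce Theorem \ref{thCGA1} to Theorem \ref{thGA2}: it suffices to show that dissipativity together with asymptotic compactness makes the cocycle uniformly compactly dissipative. Two preliminary reductions are needed.

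\emph{Invariance of $Y$.} Replace $Y$ by its $\omega$-limit set $\Omega_Y:=\bigcap_{t\ge 0}\overline{\sigma([t,\infty),Y)}$. Since $Y$ is compact, $\Omega_Y$ is nonempty, compact and invariant. At the end I will check that attraction over $\Omega_Y$ lifts to attraction over $Y$. This is automatic when $(Y,\mathbb R,\sigma)$ is a group system, because then $\sigma(t,Y)=Y$.

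\emph{Asymptotic compactness.} I read this in the usual sense: for every bounded $B\subset\mathfrak B$ and all sequences $t_n\to+\infty$, $u_n\in B$, $y_n\in Y$, the sequence $\varphi(t_n,u_n,y_n)$ is relatively compact.

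\textbf{Step 1: a compact absorbing set.} Let $K$ be the bounded absorbing set from Definition \ref{defCGA1}, and define
\begin{equation*}
\Omega:=\bigcap_{s\ge 0}\overline{\bigcup_{t\ge s}\varphi(t,K,Y)} .
\end{equation*}
Asymptotic compactness shows $\Omega$ is nonempty and compact. Indeed, any sequence $\varphi(t_n,u_n,y_n)$ with $t_n\to\infty$ and $u_n\in K$ has a convergent subsequence. A diagonal argument then shows every sequence in $\Omega$ has a convergent subsequence, and $\Omega$ is closed.

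Next I claim $\lim_{t\to\infty}\sup_{y}\beta(\varphi(t,B,y),\Omega)=0$ for every bounded $B$. Argue by contradiction: suppose there are $\varepsilon>0$, $t_n\to\infty$, $u_n\in B$, $y_n\in Y$ with $\rho(\varphi(t_n,u_n,y_n),\Omega)\ge\varepsilon$. Write $t_n=L(B)+s_n$ and use the cocycle property:
\begin{equation*}
\varphi(t_n,u_n,y_n)=\varphi\bigl(s_n,v_n,\sigma(L(B),y_n)\bigr),\qquad v_n:=\varphi(L(B),u_n,y_n)\in K .
\end{equation*}
By asymptotic compactness a subsequence converges, and its limit lies in $\Omega$ by the definition of $\Omega$. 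This contradicts the distance bound.

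\textbf{Step 2: the attractor.} Compact sets are bounded, so Step 1 gives exactly uniform compact dissipativity in the sense of Definition \ref{def2.7.3}, with compact set $\Omega$. Theorem \ref{thGA2} then produces the compact global attractor $\{I_y\}$. Equivalently, by Theorem \ref{thGA1} the skew-product system is compact dissipative, and Theorem \ref{t1.2.4} gives its Levinson center, whose fibres are the $I_y$ of Definition \ref{defGA}.

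\textbf{Main obstacle.} The delicate point is Step 1, and specifically the uniformity in $y\in Y$. It depends on $L(B)$ in Definition \ref{defCGA1} being independent of $y$, which it is, since the inclusion there is for $\varphi(t,B,Y)$. It also depends on asymptotic compactness being uniform in $y_n$. If the paper's notion of asymptotic compactness is only fibrewise, I would first upgrade it using compactness of $Y$ and continuity of $\varphi$, passing to a subsequence $y_n\to y$.
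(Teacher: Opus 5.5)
The paper does not prove this theorem; it quotes it from Cheban's book, so there is no internal argument to compare with. Your proof is correct and follows what is presumably the intended route. Dissipativity gives the bounded absorbing set $K$. Asymptotic compactness, used only along sequences $\varphi(t_n,u_n,y_n)$ with $u_n\in K$ and $t_n\to+\infty$, makes $\Omega=\bigcap_{s\ge 0}\overline{\bigcup_{t\ge s}\varphi(t,K,Y)}$ a nonempty compact set. The pull-back $t_n=L(B)+s_n$ through the cocycle identity then shows that $\Omega$ attracts every bounded set uniformly in $y$, which is more than Definition~\ref{def2.7.3} requires. Finally, Theorem~\ref{thGA2} yields an attractor in the sense of Definition~\ref{defGA1}, since $\sigma(t,Y)=Y$ for the group system $(Y,\mathbb R,\sigma)$.

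Some tidying is in order. The passage to $\Omega_Y$ and the promised lifting back to $Y$ are superfluous for exactly that reason, so drop them rather than leave a check you never carry out. Compactness of $\Omega$ needs no diagonal argument. Given $x_k\in\Omega$, pick $t_k\ge k$, $u_k\in K$, $y_k\in Y$ with $\|\varphi(t_k,u_k,y_k)-x_k\|<1/k$, and extract a convergent subsequence of $\varphi(t_k,u_k,y_k)$; closedness of $\Omega$ finishes the argument.

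Your fallback for a merely fibrewise notion of asymptotic compactness would not work as sketched. From $y_n\to y$ and continuity of $\varphi$ you get no control of $\|\varphi(t_n,u_n,y_n)-\varphi(t_n,u_n,y)\|$ as $t_n\to+\infty$. The fallback is also unnecessary. A standard way to formulate asymptotic compactness of a cocycle is through the skew-product system on bounded positively invariant sets, which is uniform in $y$. Applying it to the bounded positively invariant set $\{(\varphi(t,u,y),\sigma(t,y)) : t\ge L(K),\ u\in K,\ y\in Y\}$ gives exactly the sequential property along orbits starting in $K$ that your argument uses.
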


\section{Finite Dimensional approximations of the system (\ref{eqI1}). }\label{Sec3}

In this section we consider the finite-dimensional approximations
of the infinite-dimensional system (\ref{eqI1}).

Let $\mathfrak f\in C(\mathbb R,l_{2})$ and $\mathfrak f
=(f_{i})_{i\in \mathbb Z}$. For every $n\in \mathbb N$ consider
the $2n+1$-dimensional system
\begin{equation}\label{eq1}
\left\{
\begin{array}{l}
\dot{v}_{-n} = -\nu(-v_{n} + 2v_{-n} - v_{-n+1}) - \lambda v_{-n} - F(v_{-n}) + f_{-n}(t) \\
\dot{v}_{-n+1} = -\nu(-v_{-n} + 2v_{-n+1} - v_{-n+2}) - \lambda v_{-n+1} - F(v_{-n+1}) + f_{-n+1}(t) \\
\vdots \\
\dot{v}_{i}=-\nu(-v_{i-1} + 2v_{i} - v_{i+1}) - \lambda
v_{i} - F(v_{i}) + f_{i}(t)\ \ (|i|< n-1)\\
\vdots \\
\dot{v}_{n-1} = -\nu(-v_{n-2} + 2v_{n-1} - v_{n}) - \lambda v_{n-1} - F(v_{n-1}) + f_{n-1}(t) \\
\dot{v}_{n} = -\nu(-v_{n-1} + 2v_{n} - v_{-n}) - \lambda v_{n} -
F(v_{n}) + f_{n}(t)
\end{array}
\right.
\end{equation}
$(v_{-n}, \dots, v_n)(0) = (v_{0,-n}, \dots, v_{0,n}) \in
\mathbb{R}^{2n+1}$.

Denote by $[\mathbb R^{n}]$ the space of all linear mapping
$A:\mathbb R^{n}\to \mathbb R^{n}$ equipped with the operator
norm. We can rewrite this system as follows. Let

$$
B_{n} =
\begin{pmatrix}
-1 & 1 & 0 & 0 & \cdots & 0 & 0\\
0 & -1 & 1 & 0 & \cdots & 0 & 0\\
\vdots & & \ddots & & \vdots \\
0 & 0 & 0 & 0 & \cdots & -1 & 1\\
1 & 0 & 0 & 0 & \cdots & 0 & -1
\end{pmatrix} \in [\mathbb{R}^{2n+1}]
$$

and denote by $A_{n}\in [\mathbb R^{2n+1}]$ defined by

$$
A_{n} =
\begin{pmatrix}
2 & -1 & 0 & 0 & 0 & \cdots & 0 & 0 & -1\\
-1 & 2 & -1 & 0 & 0 & \cdots & 0 & 0 & 0\\
0 & -1 & 2 & -1 & 0 & \cdots & 0 & 0 & 0\\
0 & 0 & 0 & 0 & 0 & \cdots & -1 & 2 & -1\\
-1 & 0 & 0 & 0 & 0 & \cdots & 0 & -1 & 2\\
\end{pmatrix} \in [\mathbb{R}^{2n+1}],
$$
then we have
$$
A_{n} = B_{n}^*B_{n} = B_{n}B_{n}^* \quad (B_{n}^* \text{ the
transpose of } B_{n})\ \mbox{for every}\ n\in \mathbb Z_{0}.
$$

Let $\mathfrak{f}\in C(\mathbb R,\ell_{2})$ and $n\in \mathbb N$
denote by $\mathfrak{f}^{n}=(f^{n}_{i})_{|i|\le n}\in C(\mathbb
R,\mathbb R^{2n+1})$ defined by
\begin{equation}\label{eqF1}
\mathfrak{f}^{n}=\left\{\begin{array}{ll}
&\!\! \mathfrak{f}^{n}_{i}=f_{i},\;\mbox{if}\; |i|\le n-1 \\[2mm]
&\!\! \mathfrak{f}^{n}_{n}=f_{-n-1}\ \mbox{and}\ \ f_{-n}=f_{n+1}.
\end{array}
\right.\nonumber
\end{equation}

The system of equation (\ref{eq1}) can be rewritten as follows
\begin{equation}\label{eq2}
\dot{v} = -\nu A_{n} v - \lambda v - f^{n}(v) +
\mathfrak{f}^{n}(t), \quad t \in \mathbb{T} \quad (\mathbb{T} =
\mathbb{R} \text{ or } \mathbb{R}_+)
\end{equation}
\begin{equation}\label{eq3}
v(0) = v_0 \in \mathbb{R}^{2n+1}
\end{equation}
where $v := (v_i)_{|i| \le n}$, $f^{n}(v) := (f(v_i))_{|i| \le n}$
and $\mathfrak{f}^{n}(t) := (\mathfrak{f}_i(t))_{|i| \le n}$

\begin{remark}\label{remI1}
1. Note that the space $\mathbb R^{2n+1}$ is embedded in the space
$\ell_{2}$ as follow:
$$
u\to \mathcal{I}_{n}(u):=(\ldots,0,u_{-n},u_{-n+1},\ldots,
u_{-1},u_{0},u_{1}.\dots,u_{n-1},u_{n},0,\ldots).
$$
It is evident that this embedding (the mapping $\mathcal{I}_{n}$)
is isometric and completely continuous.

2. Taking into account the fact above (see item 1.) we will
identify, when it is necessary, the space $\mathbb R^{2n+1}$ and
subspace $\mathcal{I}_{n}(\mathbb R^{2n+1})$ of the space
$\ell_{2}$.
\end{remark}

\section{Shift Dynamical Systems and Finite Dimensional
Approximations}\label{Sec4}

\begin{lemma}\label{lF1} The following statements hold:
\begin{enumerate}
\item[1.] $\mathfrak{f}^{n}\in C(\mathbb R,\mathbb R^{2n+1})$ for
every $\mathfrak{f}\in C(\mathbb R,\ell_{2})$ and $n\in \mathbb
N$; \item[2.] the mapping $P_{n}:C(\mathbb R,\ell_{2})\to
C(\mathbb R,\mathbb R^{2n+1})$ defined by
$P_{n}(\mathfrak{f}):=\mathfrak{f}^{n}$ possesses the following
properties:
\begin{enumerate}
\item[2.1] $P_{n}$ is continuous; \item[2.2]
\begin{equation}\label{eqH1}
P_{n}(\sigma(h,\mathfrak{f}))=\sigma(h,P_{n}(\mathfrak{f}))
\end{equation}
for all $(h,\mathfrak{f})\in \mathbb R\times C(\mathbb
R,\ell_{2})$, i.e., the mapping $P_{n}$ is a homomorphism of the
shift dynamical system $(C(\mathbb R,\ell_{2}),\mathbb R,\sigma)$
into the shift dynamical system $(C(\mathbb R,\mathbb
R^{2n+1}),\mathbb R,\sigma)$.
\end{enumerate}
\item[3.] $P_{n}(h(\mathfrak{f}))\subseteq H(P_{n}(\mathfrak{f}))$
for every $(n,\mathfrak{f})\in \mathbb N\times C(\mathbb
R,\ell_{2})$; \item[4.] if the function $\mathfrak{f}$ is Lagrange
stable, then for every $n\in \mathbb N$ we have
\begin{enumerate}
\item[4.1]
\begin{equation}\label{eqH_01}
P_{n}(H(\mathfrak{f}))=H(P_{n}(\mathfrak{f}))\ \ \mbox{and}
\end{equation}
\item[4.2] the function $\mathfrak{f}^{n}$ is also Lagrange
stable.
\end{enumerate}
\end{enumerate}
\end{lemma}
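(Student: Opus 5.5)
The plan is to treat $P_{n}$ as a pointwise composition: $P_{n}(\mathfrak f)=\Pi_{n}\circ \mathfrak f$, where $\Pi_{n}:\ell_{2}\to\mathbb R^{2n+1}$ is the linear map picking the finitely many coordinates used in (\ref{eqF1}). Those are the indices $|i|\le n-1$ together with the two indices assigned to positions $\pm n$. Since each coordinate functional $\xi\mapsto \xi_{i}$ has norm $1$, $\Pi_{n}$ is bounded, say $\|\Pi_{n}\xi\|\le C\|\xi\|$ with $C\le\sqrt 2$ (each coordinate of $\xi$ is used at most twice). Everything then follows from this one observation.

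For item 1, $\mathfrak f^{n}=\Pi_{n}\circ\mathfrak f$ is a composition of continuous maps, hence continuous. For item 2.1, I take $\mathfrak f,\mathfrak g\in C(\mathbb R,\ell_{2})$ and note that $\max_{|t|\le L}\|\mathfrak f^{n}(t)-\mathfrak g^{n}(t)\|\le C\max_{|t|\le L}\|\mathfrak f(t)-\mathfrak g(t)\|$ for every $L>0$. Plugging this into the metric (\ref{eqI_3}) gives $d(P_{n}\mathfrak f,P_{n}\mathfrak g)\le \max(C,1)\,d(\mathfrak f,\mathfrak g)$. To see this, fix $L$: if $\min\{\max_{|t|\le L}\|\mathfrak f-\mathfrak g\|,L^{-1}\}\le\varepsilon$, then either $L^{-1}\le\varepsilon$ or the max is at most $\varepsilon$, and either way the corresponding quantity for $P_{n}$ is at most $\max(C,1)\varepsilon$. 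Equivalently, uniform convergence on compacts is preserved, so $P_{n}$ is Lipschitz and in particular continuous. Item 2.2 is immediate: $P_{n}(\mathfrak f^{h})(t)=\Pi_{n}\mathfrak f(t+h)=\sigma(h,P_{n}\mathfrak f)(t)$.

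For item 3 (reading $h(\mathfrak f)$ as $H(\mathfrak f)$), take $\mathfrak g\in H(\mathfrak f)$ with $\mathfrak g=\lim \mathfrak f^{h_{k}}$. Continuity of $P_{n}$ and (\ref{eqH1}) give $P_{n}\mathfrak g=\lim \sigma(h_{k},P_{n}\mathfrak f)\in H(P_{n}\mathfrak f)$.

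For item 4, Lagrange stability means $H(\mathfrak f)$ is compact. Then $P_{n}(H(\mathfrak f))$ is compact, hence closed, and it contains the orbit $\{\sigma(h,P_{n}\mathfrak f)\}=P_{n}(\{\mathfrak f^{h}\})$. Therefore $H(P_{n}\mathfrak f)\subseteq P_{n}(H(\mathfrak f))$. Combined with item 3 this gives (\ref{eqH_01}), and 4.2 follows because $H(\mathfrak f^{n})$ is a continuous image of a compact set. The only point requiring any care is the metric estimate in 2.1, i.e. checking that a pointwise Lipschitz bound transfers to the metric $d$. There is no real obstacle.
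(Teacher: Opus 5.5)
Your proof is correct and follows essentially the paper's route: the coordinatewise bound gives continuity of $P_{n}$, equivariance plus continuity gives item 3, and compactness of $H(\mathfrak f)$ gives the reverse inclusion and the compactness of $H(\mathfrak f^{n})$, with your closed-compact-image argument being a tidier form of the paper's subsequence extraction. Writing $P_{n}=\Pi_{n}\circ\mathfrak f$ has one small advantage: it covers the actual definition of $\mathfrak f^{n}$ with swapped boundary coordinates, where in fact $C\le 1$ since the selected indices are distinct. The paper's computation in this proof instead treats $P_{n}$ as plain truncation and handles the swapped case separately for $H_{n}$ in the next lemma.
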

\begin{proof} The first statement of Lemma is evident.

Let $\mathfrak{f} =(f_{i})_{i\in\mathbb Z}\in C(\mathbb
R,\ell_{2})$, $\{\mathfrak{f}_{k}\}=\{(f_{k,i})_{i\in \mathbb
Z}\}\subset C(\mathbb R,\ell_{2})$ and $\mathfrak{f}_{k}\to
\mathfrak{f}$, i.e., for every $L>0$ we have
\begin{equation}\label{eqFC1.1}
\max\limits_{|t|\le L}\sum_{i\in \mathbb
Z}|f_{k,i}(t)-f_{i}(t)|^{2}\to 0
\end{equation}
as $k\to \infty$. Note that
\begin{eqnarray}\label{eqFC2}
& \max\limits_{|t|\le
L}|P_{n}(\mathfrak{f}_{k})(t)-P_{n}(\mathfrak{f})(t)|^{2}=
\max\limits_{|t|\le L}\sum_{|i|\le
n}|P_{n}(\mathfrak{f})_{i}(t)-P_{n}(\mathfrak{f})_{i}(t)|^{2} \le \nonumber \\
& \max\limits_{|t|\le L}\sum_{i\in \mathbb
Z}|f_{k,i}(t)-f_{i}(t)|^{2}
\end{eqnarray}
for all $k\in \mathbb N$. Passing to the limit in (\ref{eqFC2}) as
$k\to \infty$ and taking into account (\ref{eqFC1.1}) we obtain
\begin{equation}\label{eqFC_3}
\lim\limits_{k\to \infty}\max\limits_{|t|\le
L}|P_{n}(\mathfrak{f}_{k})(t)-P_{n}(\mathfrak{f})(t)|^{2}=0\nonumber
\end{equation}
for every $L>0$. This means that $P_{n}(\mathfrak{f}_k)\to
P_{n}(\mathfrak{f})$ in the space $C(\mathbb R,\mathbb R^{2n+1})$
as $k\to \infty$ and, consequently, the mapping $P_{n}$ is
continuous.

To establish the relation (\ref{eqH1}) we note that
\begin{equation}\label{eqFC4}
P_{n}(\sigma(h,\mathfrak{f}))=(f^{(h)}_{i})_{|i|\le n}\
\mbox{and}\ \sigma(h,P_{n}(\mathfrak{f}))=\sigma(h,(f_{i})_{|i|\le
n})=(f^{(h)}_{i})_{|i|\le n} \nonumber
\end{equation}
and, consequently,
$P_{n}(\sigma(h,\mathfrak{f}))=\sigma(h,P_{n}(\mathfrak{f}))$ for
all $(h,\mathfrak{f})\in \mathbb R\times C(\mathbb R,\mathbb
R^{2n+1})$ .

If $\mathfrak{g}\in H(\mathfrak{f})$, then there exists a sequence
$\{h_k\}\subset \mathbb R$ such that $\sigma(h_k,\mathfrak{f})\to
\mathfrak{g}$ as $k\to \infty$. Note that
\begin{equation}\label{eqH2}
P_{n}(\sigma(h_k,\mathfrak{f}))\to P_{n}(\mathfrak{g})
\end{equation}
as $k\to \infty$. On the other hand by the equality (\ref{eqH1})
we have
\begin{equation}\label{eqH3}
\sigma(h_k,P_{n}(\mathfrak{f}))=P_{n}(\sigma(h_k,\mathfrak{f}))
\end{equation}
for all $k\in \mathbb N$. Passing to the limit in (\ref{eqH3}) as
$k\to \infty$ and taking into account (\ref{eqH2}) we obtain
\begin{equation}\label{eqH4}
\lim\limits_{k\to
\infty}\sigma(h_k,P_{n}(\mathfrak{f}))=P_{n}(\mathfrak{g}),
\end{equation}
i.e., $P_{n}(\mathfrak{g})\in H(P_{n}(\mathfrak{f}))$. This means
that $P_{n}(H(\mathfrak{f}))\subseteq H(P_{n}(\mathfrak{f}))$.

Let now $\mathfrak{f}\in C(\mathbb R,\ell_{2})$ be Lagrange stable
and $\widetilde{\mathfrak{g}}\in H(P_{n}(\mathfrak{f}))$ then
there exists a sequence $\{h_k\}\subset \mathbb R$ such that
\begin{equation}\label{eqH5}
\sigma(h_{k},P_{n}(\mathfrak{f}))=P_{n}(\mathfrak{f})^{(h_k)}\to
\widetilde{\mathfrak{g}} \nonumber
\end{equation}
as $k\to \infty$. Since the function $\mathfrak{f}\in C(\mathbb
R,\ell_{2})$ is Lagrange stable then without loss of generality we
can assume that the sequence $\{\mathfrak{f}^{(h_k)}\}\subset
C(\mathbb R,\ell_{2})$ converges. Denote its limit by
$\mathfrak{g}:=\lim\limits_{k\to \infty}\mathfrak{f}^{(h_k)}$ then
taking into account (\ref{eqH3})-(\ref{eqH4}) we obtain
\begin{equation}\label{eqH6}
P_{n}(\mathfrak{g})=\lim\limits_{k\to
\infty}P_{n}(\mathfrak{f}^{(h_k)})=\lim\limits_{k\to
\infty}\sigma(h_k,P_{n}(\mathfrak{f}))=\widetilde{\mathfrak{g}},
\nonumber
\end{equation}
i.e., $P_{n}(H(\mathfrak{f}))=H(P_{n}(\mathfrak{f}))$.

Finally, we note that if the function $\mathfrak{f}\in C(\mathbb
R,\ell_{2})$ is Lagrange stable, then for every $n\in \mathbb N$
by (\ref{eqH_01}) we have
$P_{n}(H(\mathfrak{f}))=H(P_{n}(\mathfrak{f}))$. By the second
statement of Lemma the mapping $P_{n}: C(\mathbb R,\ell_{2})\to
C(\mathbb R,\mathbb R^{(2n+1)})$ is continuous and, consequently,
the set $H(P_{n}(\mathfrak{f}))=P_{n}(H(\mathfrak{f}))$ is a
compact subset of $C(\mathbb R,\mathbb R^{2n+1})$ because the set
$H(\mathfrak{f})$ is compact. Lemma is completely proved.
\end{proof}

\begin{lemma}\label{lF2} The following statements hold:
\begin{enumerate}
\item $\mathfrak{f}^{n}\in C(\mathbb R,\mathbb R^{2n+1})$ for
every $\mathfrak{f}\in C(\mathbb R,\ell_{2})$ and $n\in \mathbb
N$; \item the mapping $H_{n}:C(\mathbb R,\ell_{2})\to C(\mathbb
R,\mathbb R^{2n+1})$ defined by
$H_{n}(\mathfrak{f}):=\mathfrak{f}^{n}$ possesses the following
properties:
\begin{enumerate}
\item $H_{n}$ is continuous; \item
\begin{equation}\label{eqH1_1}
H_{n}(\sigma(h,\mathfrak{f}))=\sigma(h,H_{n}(\mathfrak{f}))
\end{equation}
for all $(h,\mathfrak{f})\in \mathbb R\times C(\mathbb
R,\ell_{2})$, i.e., the mapping $H_{n}$ is a homomorphism of the
shift dynamical system $(C(\mathbb R,\ell_{2}),\mathbb R,\sigma)$
into the shift dynamical system $(C(\mathbb R,\mathbb
R^{2n+1}),\mathbb R,\sigma)$.
\end{enumerate}
\item $P_{n}(h(\mathfrak{f}))\subseteq H(P_{n}(\mathfrak{f}))$ for
every $(n,\mathfrak{f})\in \mathbb N\times C(\mathbb R,\ell_{2})$;
\item if the function $\mathfrak{f}$ is Lagrange stable, then for
every $n\in \mathbb N$ we have
\begin{enumerate}
\item
\begin{equation}\label{eqH_1}
H_{n}(H(\mathfrak{f}))=H(H_{n}(\mathfrak{f}))\ \
\mbox{and}\nonumber
\end{equation}
\item the function $\mathfrak{f}^{n}$ is also Lagrange stable.
\end{enumerate}
\end{enumerate}
\end{lemma}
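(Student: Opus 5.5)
Lemma \ref{lF2} is Lemma \ref{lF1} restated with the mapping renamed: $H_{n}(\mathfrak{f})=\mathfrak{f}^{n}=P_{n}(\mathfrak{f})$. I will therefore repeat the argument of Lemma \ref{lF1} with $H_{n}$ in place of $P_{n}$, checking each step directly against the definition (\ref{eqF1}). Note that (\ref{eqF1}) is a coordinate rearrangement, possibly with a wrap-around of the boundary indices $\pm n$ to $-n-1$, $n+1$. It is therefore cleanest to write $H_{n}(\mathfrak{f})(t)=\Pi_{n}\mathfrak{f}(t)$, where $\Pi_{n}:\ell_{2}\to\mathbb R^{2n+1}$ is a fixed linear map. $\Pi_{n}$ selects $2n+1$ coordinates of $\xi\in\ell_{2}$, with the prescribed indices at the two ends, so $|\Pi_{n}\xi|\le\|\xi\|$. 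This single observation drives every item.

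For item 1: $t\mapsto\Pi_{n}\mathfrak{f}(t)$ is the composition of a continuous map with a bounded linear operator, hence continuous. For item 2(a) I take $\mathfrak{f}_{k}\to\mathfrak{f}$ in $C(\mathbb R,\ell_{2})$, i.e. uniformly on each $[-L,L]$. Then
$\max_{|t|\le L}|H_{n}(\mathfrak{f}_{k})(t)-H_{n}(\mathfrak{f})(t)|\le\max_{|t|\le L}\|\mathfrak{f}_{k}(t)-\mathfrak{f}(t)\|\to0$, exactly as in (\ref{eqFC2}). In the metric (\ref{eqI_3}) this even gives $d(H_{n}\mathfrak{f}_{1},H_{n}\mathfrak{f}_{2})\le d(\mathfrak{f}_{1},\mathfrak{f}_{2})$, so $H_{n}$ is $1$-Lipschitz. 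For 2(b), (\ref{eqH1_1}) holds because $\Pi_{n}$ acts pointwise in $t$: both sides equal $t\mapsto\Pi_{n}\mathfrak{f}(t+h)$.

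Item 3 (read with $H_{n}$ and the hull $H(\mathfrak{f})$) follows as in Lemma \ref{lF1}. Given $\mathfrak{g}\in H(\mathfrak{f})$, pick $h_{k}$ with $\sigma(h_{k},\mathfrak{f})\to\mathfrak{g}$. Continuity of $H_{n}$ and (\ref{eqH1_1}) then give $\sigma(h_{k},H_{n}\mathfrak{f})\to H_{n}\mathfrak{g}$, so $H_{n}\mathfrak{g}\in H(H_{n}\mathfrak{f})$.

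For item 4 (a), the reverse inclusion is the only step needing an extra idea, namely compactness. Take $\widetilde{\mathfrak{g}}=\lim\sigma(h_{k},H_{n}\mathfrak{f})$. Lagrange stability means $H(\mathfrak{f})$ is compact, so we may pass to a subsequence with $\sigma(h_{k},\mathfrak{f})\to\mathfrak{g}\in H(\mathfrak{f})$. Then $H_{n}\mathfrak{g}=\lim H_{n}\sigma(h_{k},\mathfrak{f})=\lim\sigma(h_{k},H_{n}\mathfrak{f})=\widetilde{\mathfrak{g}}$. Item 4(b) follows because $H(H_{n}\mathfrak{f})=H_{n}(H(\mathfrak{f}))$ is the continuous image of a compact set, hence compact.

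The main obstacle is not analytic. It is making sure that the index rearrangement in (\ref{eqF1}), which as written is slightly garbled at $i=\pm n$, really defines a map of the form $\Pi_{n}$ that is independent of $t$ and sends each coordinate of $\mathfrak{f}^{n}$ to a single coordinate of $\mathfrak{f}$. Once that is fixed, the norm bound $|\Pi_{n}\xi|\le\|\xi\|$ holds because distinct output coordinates use distinct input indices, and everything else is formal.
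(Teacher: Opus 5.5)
Your proof is correct and follows the paper's argument step for step: the coordinatewise estimate for continuity of $H_{n}$, the pointwise-in-$t$ commutation with shifts, the inclusion $H_{n}(H(\mathfrak{f}))\subseteq H(H_{n}(\mathfrak{f}))$ via continuity, the reverse inclusion by extracting a convergent subsequence of $\sigma(h_{k},\mathfrak{f})$ from the compact hull, and compactness of $H(\mathfrak{f}^{n})$ as a continuous image. The only difference is cosmetic: you write $H_{n}$ as a fixed coordinate-selection map $\Pi_{n}$ with $|\Pi_{n}\xi|\le\|\xi\|$, which gives a clean $1$-Lipschitz bound, whereas the paper splits off the boundary indices $\pm n$ and bounds them with a crude factor $2$.
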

\begin{proof} The first statement of Lemma is trivial.

Let $\mathfrak{f} =(f_{i})_{i\in\mathbb Z}\in C(\mathbb
R,\ell_{2})$, $\{\mathfrak{f}_{k}\}=\{(f_{k,i})_{i\in \mathbb
Z}\subset C(\mathbb R,\ell_{2})$ and $\mathfrak{f}_{k}\to
\mathfrak{f}$, i.e., for every $L>0$ we have
\begin{equation}\label{eqFC1}
\max\limits_{|t|\le L}\sum_{i\in \mathbb
Z}|f_{k,i}(t)-f_{i}(t)|^{2}\to 0 \nonumber
\end{equation}
as $k\to \infty$.

Since
\begin{eqnarray}\label{eqFC2_1}
&  \max\limits_{|t|\le
L}|H_{n}(\mathfrak{f}_{k})(t)-H_{n}(\mathfrak{f})(t)|^{2}=
\max\limits_{|t|\le L}\sum_{|i|\le
n}|H_{n}(\mathfrak{f})_{i}(t)-H_{n}(\mathfrak{f})_{i}(t)|^{2}=\nonumber \\
& \max\limits_{|t|\le L}\sum_{|i|\le
n-1}|H_{n}(\mathfrak{f})_{i}(t)-H_{n}(\mathfrak{f})_{i}(t)|^{2} + \nonumber \\
& \max\limits_{|t|\le L}\sum_{|i|=
n}|H_{n}(\mathfrak{f})_{i}(t)-H_{n}(\mathfrak{f})_{i}(t)|^{2}
\end{eqnarray}
and
\begin{equation}\label{eqFC2_2}
\max\limits_{|t|\le
L}\sum\limits_{|i|=n}|H_{n}(\mathfrak{f})_{i}(t)-H_{n}(\mathfrak{f})_{i}(t)|^{2}=
\max\limits_{|t|\le
L}\sum\limits_{|i|=n}|f_{k,i}(t)-f_{i}(t)|^{2}=
\end{equation}
$$
\max\limits_{|t|\le L}|f_{k,n+1}(t)-f_{n+1}(t)|^{2}
+\max\limits_{|t|\le L}|f_{k,-n-1}(t)-f_{-n-1}(t)|^{2} \le
$$
$$
2\max\limits_{|t|\le L}\sum\limits_{|i|\le
n+1}|f_{k,i}(t)-f_{i}(t)|^{2}\le 2\max\limits_{|t|\le
L}\sum\limits_{i\in \mathbb
Z}|f_{k,i}(t)-f_{i}(t)|^{2}=
$$
$$
2\max\limits_{|t|\le L}\|\mathfrak{f}_{k}(t)-\mathfrak{f}(t)\|\to
0
$$
as $k\to \infty$.

From (\ref{eqFC2_1}) and (\ref{eqFC2_2}) we receive
\begin{equation}\label{eqFC2_3}
\max\limits_{|t|\le
L}|H_{n}(\mathfrak{f}_{k})(t)-H_{n}(\mathfrak{f})(t)|^{2}\to
0\nonumber
\end{equation}
as $k\to \infty$ for every $L>0$, i.e., $H_{n}(\mathfrak{f}_k)\to
H_{n}(\mathfrak{f})$ in the space $C(\mathbb R,\mathbb R^{2n+1})$
and, consequently, the mapping $H_{n}$ is continuous.

To prove (\ref{eqH1_1}) we note that
\begin{equation}\label{eqFC2_4}
H_{n}(\sigma(h,\mathfrak{f}))=H_{n}(\mathfrak{f}^{(h)})(t)=
\left\{\begin{array}{ll}
&\!\! f_{i}^{(h)}(t),\;\mbox{if}\; |i|\le n-1 \\[2mm]
&\!\! f_{-n-1}^{(h)}(t),\; \mbox{if}\; i=n\ \ \mbox{and}\ \\[2mm]
&\!\! f_{n+1}^{(h)}(t),\; \mbox{if}\; i=-n
\end{array}
\right.\nonumber
\end{equation}
and
\begin{equation}\label{eqFC2_5}
\sigma(h,H_{n}(\mathfrak{f}))=H_{n}(\mathfrak{f})^{(h)}(t)=
\left\{\begin{array}{ll}
&\!\! f_{i}^{(h)}(t),\;\mbox{if}\; |i|\le n-1 \\[2mm]
&\!\! f_{-n-1}^{(h)}(t),\; \mbox{if}\; i=n\ \ \mbox{and}\ \\[2mm]
&\!\! f_{n+1}^{(h)}(t),\; \mbox{if}\; i=-n
\end{array}
\right.\nonumber
\end{equation}
for all $t\in \mathbb R$ and, consequently,
$H_{n}(\sigma(h,\mathfrak{f}))=\sigma(h,H_{n}(\mathfrak{f}))$ for
every $(h,\mathfrak{f})\in \mathbb R\times C(\mathbb R,\mathbb
R^{2n+1})$ .

Let $\mathfrak{g}\in H(\mathfrak{f})$ then there exists a sequence
$\{h_k\}\subset \mathbb R$ such that $\sigma(h_k,\mathfrak{f})\to
\mathfrak{g}$ as $k\to \infty$. Note that
\begin{equation}\label{eqH2_1}
H_{n}(\sigma(h_k,\mathfrak{f}))\to H_{n}(\mathfrak{g})
\end{equation}
as $k\to \infty$. By the equality (\ref{eqH1_1}) we have
\begin{equation}\label{eqH3_1}
\sigma(h_k,H_{n}(\mathfrak{f}))=H_{n}(\sigma(h_k,\mathfrak{f}))
\end{equation}
for all $k\in \mathbb N$. Passing to the limit in (\ref{eqH3_1})
as $k\to \infty$ and taking into account (\ref{eqH2_1}) we obtain
\begin{equation}\label{eqH4_1}
\lim\limits_{k\to
\infty}\sigma(h_k,H_{n}(\mathfrak{f}))=H_{n}(\mathfrak{g}),
\end{equation}
i.e., $H_{n}(\mathfrak{g})\in H(H_{n}(\mathfrak{f}))$. This means
that $H_{n}(H(\mathfrak{f}))\subseteq H(H_{n}(\mathfrak{f}))$.

If the function $\mathfrak{f}\in C(\mathbb R,\ell_{2})$ is
Lagrange stable, then for every $\widetilde{\mathfrak{g}}\in
H(P_{n}(\mathfrak{f}))$ there exists a sequence $\{h_k\}\subset
\mathbb R$ such that
\begin{equation}\label{eqH5_1}
H_{n}(\mathfrak{f})^{(h_k)}\to \widetilde{\mathfrak{g}} \nonumber
\end{equation}
as $k\to \infty$. Without loss of generality we can assume that
the sequence $\{\mathfrak{f}^{(h_k)}\}\subset C(\mathbb
R,\ell_{2})$ converges because the function $\mathfrak{f}\in
C(\mathbb R,\ell_{2})$ is Lagrange stable. Denote its limit by
$\mathfrak{g}:=\lim\limits_{k\to \infty}\mathfrak{f}^{(h_k)}$ then
taking into account (\ref{eqH3_1})-(\ref{eqH4_1}) we obtain
\begin{equation}\label{eqH6_1}
H_{n}(\mathfrak{g})=\lim\limits_{k\to
\infty}H_{n}(\mathfrak{f}^{(h_k)})=\lim\limits_{k\to
\infty}\sigma(h_k,H_{n}(\mathfrak{f}))=\widetilde{\mathfrak{g}},
\nonumber
\end{equation}
i.e., $P_{n}(H(\mathfrak{f}))=H(P_{n}(\mathfrak{f}))$.

We note that if the function $\mathfrak{f}\in C(\mathbb
R,\ell_{2})$ is Lagrange stable, then for every $n\in \mathbb N$
by (\ref{eqH_01}) we have
$H_{n}(H(\mathfrak{f}))=H(H_{n}(\mathfrak{f}))$. By the second
statement of Lemma the mapping $H_{n}: C(\mathbb R,\ell_{2})\to
C(\mathbb R,\mathbb R^{2n+1})$ is continuous and, consequently,
the set $H(H_{n}(\mathfrak{f}))=H_{n}(H(\mathfrak{f}))$ is a
compact subset of $C(\mathbb R,\mathbb R^{2n+1})$ because the set
$H(\mathfrak{f})$ is compact. Lemma is proved.
\end{proof}

\begin{lemma}\label{lF_3} Let $\mathfrak{f}\in C(\mathbb R,\ell_{2})$ and $L$
be a given positive number then for every $\varepsilon >0$ there
exists a natural number $k_{\varepsilon}\in \mathbb N$ such that
\begin{equation}\label{eqC1_1}
\sum\limits_{|i|>k_{\varepsilon}}|f_{i}(t)|^{2}<\varepsilon
\nonumber
\end{equation}
for every $t\in [-L,L]$, i.e.,
\begin{equation}\label{eqC1_2}
\sup\limits_{|t|\le L}R_{n}(\mathfrak{f})(t)=0,
\end{equation}
where
\begin{equation}\label{eqC1_3}
R_{n}(\mathfrak{f})(t)=\sum\limits_{|i|\ge
n+1}|f_{i}(t)|^{2}\nonumber
\end{equation}
for all $t\in \mathbb R$ and $n\in \mathbb N$.
\end{lemma}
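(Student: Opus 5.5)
The statement is a uniform-in-$t$ tail estimate on a compact time interval. As written, (\ref{eqC1_2}) should be read as $\lim_{n\to\infty}\sup_{|t|\le L}R_{n}(\mathfrak f)(t)=0$, which is exactly the $\varepsilon$-formulation. The plan is to use compactness of $K:=\mathfrak f([-L,L])\subset\ell_{2}$, which is the continuous image of a compact interval, together with the fact that tails of a single $\ell_2$ element go to zero. This is a Dini-type, or equivalently $\varepsilon$-net, argument.

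\textbf{Step 1: the tail functional is continuous.} For fixed $n$, set $\rho_n(\xi):=\bigl(\sum_{|i|\ge n+1}|\xi_i|^2\bigr)^{1/2}$. Then $\rho_n$ is the norm of the projection of $\xi$ onto the coordinates $|i|\ge n+1$, so it is $1$-Lipschitz:
\begin{equation*}
|\rho_n(\xi)-\rho_n(\eta)|\le \|\xi-\eta\| .
\end{equation*}
It is also monotonically nonincreasing in $n$, and $\rho_n(\xi)\to 0$ as $n\to\infty$ for every $\xi\in\ell_2$, because the series $\sum|\xi_i|^2$ converges.

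\textbf{Step 2: an $\varepsilon$-net argument.} Given $\varepsilon>0$, let $\delta:=\sqrt{\varepsilon}/2$. By uniform continuity of $\mathfrak f$ on $[-L,L]$, or directly by compactness of $K$, choose finitely many times $t_1,\dots,t_m\in[-L,L]$ such that every $t\in[-L,L]$ has some $j$ with $\|\mathfrak f(t)-\mathfrak f(t_j)\|<\delta$. For each $j$, choose $k_j$ with $\rho_{k_j}(\mathfrak f(t_j))<\delta$, and set $k_\varepsilon:=\max_j k_j$.

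\textbf{Step 3: conclusion.} Fix $t\in[-L,L]$ and take the $j$ from Step 2. By monotonicity in $n$ and the Lipschitz bound,
\begin{equation*}
\rho_{k_\varepsilon}(\mathfrak f(t))\le \rho_{k_\varepsilon}(\mathfrak f(t_j))+\|\mathfrak f(t)-\mathfrak f(t_j)\|<2\delta=\sqrt{\varepsilon}.
\end{equation*}
Squaring gives $\sum_{|i|>k_\varepsilon}|f_i(t)|^2<\varepsilon$ for all $|t|\le L$, and the same bound holds for every $n\ge k_\varepsilon$. Hence $\sup_{|t|\le L}R_n(\mathfrak f)(t)\to0$.

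Alternatively, Dini's theorem applies directly: $R_n(\mathfrak f)$ is a nonincreasing sequence of continuous functions on the compact interval $[-L,L]$ that converges pointwise to $0$, so the convergence is uniform.

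The only delicate point is obtaining uniformity in $t$. Pointwise convergence of the tails is immediate, and compactness of $[-L,L]$ combined with the Lipschitz property of the tail functional, or Dini's theorem, upgrades it to uniform convergence. No other obstacle is expected.
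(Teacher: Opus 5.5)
Your proof is correct and takes essentially the paper's route. The paper also starts from compactness of $\mathfrak f([-L,L])$ in $\ell_2$, but simply cites the criterion that compact subsets of $\ell_2$ have uniformly small tails (Theorem 5.25 of \cite{LS_1974}); your $\varepsilon$-net argument with the $1$-Lipschitz tail functional proves that half of the criterion directly, and your Dini alternative is an equally valid shortcut. You are also right that (\ref{eqC1_2}) must be read as $\lim_{n\to\infty}\sup_{|t|\le L}R_n(\mathfrak f)(t)=0$.
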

\begin{proof} Assume that $\mathfrak{f}\in C(\mathbb R,\ell_{2})$ and $L>0$
then $Q_{L}:=\mathfrak{f}([-L,L])$ is a compact subset of
$\ell_{2}$. By Theorem 5.25 \cite[Ch.V, p.167]{LS_1974} for every
$\varepsilon
>0$ there exists a number $k_{\varepsilon}\in \mathbb N$ such that
\begin{equation}\label{eqC1_4}
\sum\limits_{|i|>k_{\varepsilon}}|w_{i}|^{2}<\varepsilon \nonumber
\end{equation}
for all $w\in Q_{L}$. Let now $t\in [-L,L]$ then for
$w:=\mathfrak{f}(t)\in Q_{L}=\mathfrak{f}([-L,L])$ we have
\begin{equation}\label{eqC1_5}
\sum\limits_{|i|>k_{\varepsilon}}|f_{i}(t)|^{2}=\sum\limits_{|i|>k_{\varepsilon}}|w_{i}|^{2}<\varepsilon
.\nonumber
\end{equation}
\end{proof}

\begin{remark}\label{remF1} Note that the relation (\ref{eqC1_2})
remains true if we replace $\sup$ by $\max$ because the function
$R_{n}(\mathfrak{f})\in C(\mathbb R,\mathbb R_{+})$, where
\begin{equation}\label{eqC1_6}
R_{n}(\mathfrak{f})(t)=\sum\limits_{|i|\ge
n+1}|f_{i}(t)|^{2}=\|\mathfrak{f}(t)\|^{2}-\sum\limits_{|i|\le
n}|f_{i}(t)|^{2} \nonumber
\end{equation}
for all $(n,t)\in \mathbb N\times \mathbb R$.
\end{remark}

\begin{lemma}\label{lF3} The following statements hold:
\begin{enumerate}
\item for every $\mathfrak{f}\in C(\mathbb R,\ell_{2})$ we have
$P_{n}(\mathfrak{f})\to \mathfrak{f}$ in $C(\mathbb R,\ell_{2})$
as $n\to \infty$; \item if the function $\mathfrak{f}\in C(\mathbb
R,\ell_{2})$ is Lagrange stable, then
$$
H(K):=\overline{\{\sigma(h,\mathfrak{g})|\ h\in\mathbb R,\
\mathfrak{g}\in K\}}
$$
is a compact subset of $C(\mathbb R,\ell_{2})$, where
$K:=\{P_{n}(\mathfrak{f})|\ n\in \mathbb N\}$.
\end{enumerate}
\end{lemma}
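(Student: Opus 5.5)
Via the embedding $\mathcal I_n$ of Remark \ref{remI1}, we view $P_n(\mathfrak f)(t)=(f_i(t))_{|i|\le n}$ as an element of $\ell_2$, so that $P_n(\mathfrak f)\in C(\mathbb R,\ell_2)$.

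\emph{Statement 1.} For every $t$,
$$\|P_n(\mathfrak f)(t)-\mathfrak f(t)\|^2=\sum_{|i|\ge n+1}|f_i(t)|^2=R_n(\mathfrak f)(t).$$
Given $\varepsilon>0$, choose $L>1/\varepsilon$. Lemma \ref{lF_3} applied on $[-L,L]$ gives $n_\varepsilon$ with $\max_{|t|\le L}R_n(\mathfrak f)(t)<\varepsilon^2$ for $n\ge n_\varepsilon$. Since $R_n(\mathfrak f)(t)$ is nonincreasing in $n$, the same bound holds on every $[-L',L']$ with $L'\le L$. By the definition (\ref{eqI_3}) of $d$, the terms with $L'>L$ are bounded by $1/L'<\varepsilon$. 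Hence $d(P_n(\mathfrak f),\mathfrak f)\le\varepsilon$, which proves the first statement.

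\emph{Statement 2.} Since $\mathfrak f$ is Lagrange stable, $H(\mathfrak f)$ is compact, and we apply Lemma \ref{lAPF02} to the family
$$\mathfrak F:=\{P_n(\mathfrak f)^{(h)}\mid n\in\mathbb N,\ h\in\mathbb R\}.$$
By Lemma \ref{lAPF02}, compactness of $H(\mathfrak f)$ yields a compact $Q\subset\ell_2$ with $\mathfrak f(\mathbb R)\subseteq Q$ and uniform equicontinuity of $\{\mathfrak f^{(h)}\}$, i.e.\ uniform continuity of $\mathfrak f$ on $\mathbb R$. For condition (1), let $\Pi_n$ denote the coordinate projection onto $|i|\le n$, and set
$$\widetilde Q:=\overline{\bigcup_n \Pi_n(Q)}\cup Q.$$
We show $\widetilde Q$ is compact, using the characterization of compact sets in $\ell_2$ (Theorem 5.25 of \cite{LS_1974}): a set is relatively compact iff it is bounded and has uniformly small tails. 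The set $\bigcup_n\Pi_n(Q)$ is bounded by $\sup_Q\|w\|$, and its tails satisfy $\sum_{|i|>k}|(\Pi_n w)_i|^2\le\sum_{|i|>k}|w_i|^2$, which is uniformly small because $Q$ is compact. Thus $\widetilde Q$ is compact and contains $P_n(\mathfrak f)(t)$ for all $n,t$. For condition (2), $\Pi_n$ is a contraction, so
$$\|P_n(\mathfrak f)(t_1)-P_n(\mathfrak f)(t_2)\|\le\|\mathfrak f(t_1)-\mathfrak f(t_2)\|,$$
and the $\delta(\varepsilon)$ for $\mathfrak f$ works uniformly in $n$ and $h$. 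Lemma \ref{lAPF02} then gives that $\mathfrak F$ is precompact, so its closure $H(K)$ is compact.

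The only delicate point is that Lemma \ref{lAPF02} is used in both directions: first to extract $Q$ and uniform continuity from compactness of $H(\mathfrak f)$, then to conclude precompactness of $\mathfrak F$. The tail estimate showing $\widetilde Q$ compact is the key step; everything else is routine.
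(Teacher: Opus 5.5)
Your proof is correct and follows the same plan as the paper: the tail estimate of Lemma \ref{lF_3} for the first statement, and for the second, Lemma \ref{lAPF02} used in both directions together with the tail criterion for compactness in $\ell_2$. The one real difference is the equicontinuity step, where your contraction bound $\|P_n(\mathfrak f)(t_1)-P_n(\mathfrak f)(t_2)\|\le\|\mathfrak f(t_1)-\mathfrak f(t_2)\|$ gives uniformity in $n$ and $h$ in one line, replacing the paper's case split into $n<k_\varepsilon$ and $n\ge k_\varepsilon$.
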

\begin{proof} Let $\mathfrak{f}\in C(\mathbb R,\ell_{2})$ then by Lemma \ref{lF_3} we have
\begin{equation}\label{eqC10}
\lim_{n\to \infty}\max\limits_{|t|\le L}\sum\limits_{|i|\ge
n+1}|f_{i}(t)|^{2}=0 .
\end{equation}
Note that
\begin{equation}\label{eqC11}
\max\limits_{|t|\le
L}\|P_{n}(\mathfrak{f})(t)-\mathfrak{f}(t)\|^{2}=\max\limits_{|t|\le
L}\sum\limits_{i\in \mathbb
Z}|P_{n,i}(f)(t)-f_{i}(t)|^{2}=\max\limits_{|t|\le
L}\sum\limits_{|i|\ge n+1}|f_{i}(t)|^{2}
\end{equation}
for every $n\in \mathbb N$. Passing to the limit in (\ref{eqC11})
and taking into account (\ref{eqC10}) we obtain that
$P_{n}\mathfrak{f}\to \mathfrak{f}$ in the space $C(\mathbb
R,\ell_{2})$.

Assume that the function $\mathfrak{f}\in C(\mathbb R,\ell_{2})$
is Lagrange stable. By Lemma \ref{lAPF02} the following statements
hold:
\begin{enumerate}
\item there exists a compact subset $Q\subset \ell_{2}$ such that
\begin{equation}\label{eqC12}
\mathfrak{f}(\mathbb R)\subset Q;\nonumber
\end{equation}
\item for every $\varepsilon >0$ there exists a $\delta
=\delta(\varepsilon)>0$ such that
\begin{equation}\label{eqC13}
|t_1-t_2|<\delta\ \ \mbox{implies}\ \
\|f(t_1)-f(t_2)\|^{2}<\varepsilon^{2}/2 .
\end{equation}
\end{enumerate}

Since the subset $\mathfrak{f}(\mathbb R)\subset \ell_{2}$ is
precompact then by Theorem 5.25 \cite[Ch.V, p.167]{LS_1974} for
$\varepsilon
>0$ there exists a number $k_{\varepsilon}\in \mathbb N$ such that
\begin{equation}\label{eqC14}
\sum\limits_{|i|\ge
k_{\varepsilon}}|f_{i}(h)|^{2}<\varepsilon^{2}/8
\end{equation}
for all $h\in \mathbb R$.

Consider the family of functions $\mathbb F
:=\{\sigma(h,P_{n}(\mathfrak{f}))|\ (h,n)\in \mathbb R\times
\mathbb N\}$. Let $\psi \in \mathbb F$ then there exists
$(h_0,n_0)\in \mathbb R\times \mathbb N$ such that $\psi
=\sigma(h_0,P_{n_0}(\mathfrak{f}))$. Note that
\begin{eqnarray}\label{eqC15}
& \sum\limits_{|i|\ge
k_{\varepsilon}}|\psi(\tau)|^{2}=\sum\limits_{|i|\ge
k_{\varepsilon}}|\sigma(h_0,P_{n_{0}}(\mathfrak{f}))(\tau)|^{2}=\\
& \sum\limits_{k_{\varepsilon}\le |i|\le
n_{0}}|f^{(h)}_{i}(\tau)|^{2}= \sum\limits_{k_{\varepsilon}\le
|i|\le n_{0}}|f_{i}(h_0+\tau)|^{2}\le \sum\limits_{|i|\ge
k_{\varepsilon}}|f_{i}(h_0+\tau)|^{2}\nonumber
\end{eqnarray}
for all $\tau\in \mathbb R$. From (\ref{eqC14}) and (\ref{eqC15})
we receive
\begin{equation}\label{eqC16}
\sum\limits_{|i|\ge k_{\varepsilon}}|\psi(\tau)|^{2} <\varepsilon
\nonumber
\end{equation}
for all $\tau \in \mathbb R$ and, consequently, $\mathbb F(\mathbb
R):=\{\psi(\tau)|\ \psi \in \mathbb F,\ \tau \in \mathbb R\}$ is a
precompct subset of $\ell_{2}$.

Let $\varepsilon$ be an arbitrary positive number and $\delta
=\delta(\varepsilon)>0$ the number figuring in (\ref{eqC13}). If
$\psi\in \mathbb F$, then there exist $h\in \mathbb R$ and $n\in
\mathbb N$ such that $\psi
=\sigma(h,P_{n}(\mathfrak{f}))=P_{n}(\mathfrak{f})^{(h)}$. Let
$t_1,t_2\in \mathbb R$ be two arbitrary numbers with
$|t_1-t_2|<\delta$.

We will consider two cases:

a. $n< k_{\varepsilon}$, then it is clear that there exists
$0<\delta_{1}(\varepsilon)<\delta(\varepsilon)$ such that
$|t_1-t_2|<\delta_{1}(\varepsilon)$ implies
\begin{equation}\label{eqC_17}
\|P_{m}(\mathfrak{f})^{(h)}(t_1)-P_{m}(\mathfrak{f})^{(h)}(t_2)\|<\varepsilon
\nonumber
\end{equation}
for every $1\le m\le n$.

b. If $n\ge k_{\varepsilon}$, then taking into account
(\ref{eqC13})-(\ref{eqC14}) we obtain

\begin{eqnarray}\label{eqC17}
& \|\psi(t_1)-\psi(t_2)\|^{2}=\sum\limits_{i\in \mathbb
Z}|\psi_{i}(t_1)-\psi_{i}(t_2)|^{2}=\nonumber \\
& \sum\limits_{|i|\le
k_{\varepsilon}-1}|P_{n}(\mathfrak{f})^{(h)}_{i}(t_1)-P_{n}(\mathfrak{f})^{(h)}_{i}(t_2)|^{2}+
\sum\limits_{|i|\ge
k_{\varepsilon}}|P_{n}(\mathfrak{f})^{(h)}_{i}(t_1)-P_{n}(\mathfrak{f})^{(h)}_{i}(t_2)|^{2}
= \nonumber
\\
& \sum\limits_{|i|\le
k_{\varepsilon}-1}|f^{(h)}_{i}(t_1)-f^{(h)}_{i}(t_2)|^{2}+
\sum\limits_{k_{\varepsilon}\le |i|\le
n}|f^{(h)}_{i}(t_1)-f^{(h)}_{i}(t_2)|^{2} \le \nonumber
\\
& \sum\limits_{|i|\le
k_{\varepsilon}-1}|f^{(h)}_{i}(t_1)-f^{(h)}_{i}(t_2)|^{2}+
2(\sum\limits_{k_{\varepsilon}\le |i|\le
n}|f^{(h)}_{i}(t_1)|^{2}+|f^{(h)}_{i}(t_2)|^{2})\le \nonumber \\
& \sum\limits_{|i|\ge
k_{\varepsilon}-1}|f_{i}(t_1)-f_{i}(t_2)|^{2}+2(\sum\limits_{|i|\ge
k_{\varepsilon}}|f^{(h)}_{i}(t_1)|^{2}+\sum\limits_{|i|\ge
k_{\varepsilon}} |f^{(h)}_{i}(t_2)|^{2}))<\nonumber
\\
&
\varepsilon^{2}/2+2(\varepsilon^{2}/8+\varepsilon^{2}/8)=\varepsilon^{2}\nonumber
.
\end{eqnarray}
Now to finish the proof of Lemma it suffices to apply Lemma
\ref{lAPF02}.
\end{proof}

\begin{lemma}\label{lF4} Let $\mathfrak{f}\in C(\mathbb R,\ell_{2})$ be a Lagrange stable
function, $K:=\{P_{n}(\mathfrak{f})\}_{n\in \mathbb Z_{0}}$ and
$H(K):=\overline{\{\sigma(h,P_{n}(\mathfrak{f}))|\ h\in \mathbb
R,\ n\in \mathbb Z_{0}\}}$. Then there exists a positive constant
$C$ such that
\begin{equation}\label{eqGF1}
\|\mathfrak{g}(t)\|\le C \nonumber
\end{equation}
for all $t\in \mathbb R$ and $\mathfrak{g}\in H(K)$.
\end{lemma}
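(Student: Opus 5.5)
Since $\mathfrak{f}$ is Lagrange stable, I would first obtain a uniform bound for $\mathfrak{f}$ itself. By Lemma \ref{lAPF02}, applied to the precompact set $H(\mathfrak{f})$, there is a compact set $Q\subset\ell_{2}$ with $\mathfrak{f}(\mathbb R)\subseteq Q$. Put
$$C:=\sup\{\|w\|\ :\ w\in Q\}<+\infty .$$
This gives $\|\mathfrak{f}(t)\|\le C$ for all $t\in\mathbb R$. A shift does not change the range of a function, so the same bound holds for every $\sigma(h,\mathfrak{f})$.

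Next I bound the generators of $H(K)$. Take $\psi=\sigma(h,P_{n}(\mathfrak{f}))$ with $h\in\mathbb R$ and $n\in\mathbb Z_{0}$. Under the identification of Remark \ref{remI1}, $P_{n}$ keeps the coordinates with $|i|\le n$ and sets the others to zero. This is the same computation used in (\ref{eqC11}) and (\ref{eqC15}). Hence
$$\|\psi(\tau)\|^{2}=\sum_{|i|\le n}|f_{i}(h+\tau)|^{2}\le \|\mathfrak{f}(h+\tau)\|^{2}\le C^{2}$$
for every $\tau\in\mathbb R$. So every element of the set $\{\sigma(h,P_{n}(\mathfrak{f}))\}$ is bounded by $C$ uniformly in $t$, $h$ and $n$.

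Finally, I pass to the closure. Let $\mathfrak{g}\in H(K)$. Then there is a sequence $\psi_{k}=\sigma(h_{k},P_{n_{k}}(\mathfrak{f}))$ with $\psi_{k}\to\mathfrak{g}$ in $C(\mathbb R,\ell_{2})$. The metric (\ref{eqI_3}) generates the compact-open topology, so $\psi_{k}(t)\to\mathfrak{g}(t)$ in $\ell_{2}$ for each fixed $t$. The norm is continuous, so $\|\mathfrak{g}(t)\|=\lim_{k}\|\psi_{k}(t)\|\le C$.

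No step here is a real obstacle. The only point needing care is the first one: we must bound the range of $\mathfrak{f}$ over all of $\mathbb R$, not just on compact intervals. This is exactly where Lagrange stability is needed, through condition (1) of Lemma \ref{lAPF02}. One could instead use Lemma \ref{lF3}(2), which says $H(K)$ is compact, and apply Lemma \ref{lAPF02} to $H(K)$ directly. That route gives a compact $Q'$ containing all the values $\mathfrak{g}(\mathbb R)$, and the bound follows at once, but the elementary argument above avoids it.

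If the truncation at $|i|=n$ is read as in (\ref{eqF1}), with the reassigned coordinates $f_{\pm(n+1)}$ placed at the boundary sites, the same estimate still works. In that case each coordinate of $\psi(\tau)$ is some $f_{j}(h+\tau)$ with distinct indices $j$, so the sum of squares is again at most $\|\mathfrak{f}(h+\tau)\|^{2}\le C^{2}$.
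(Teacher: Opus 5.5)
Your proof is correct, but it takes a different route from the paper. The paper's argument is one line built on Lemma \ref{lF3}: $H(K)$ is compact in $C(\mathbb R,\ell_{2})$. Lemma \ref{lAPF02} then puts all values $\mathfrak{g}(t)$, for $\mathfrak{g}\in H(K)$ and $t\in\mathbb R$, inside a single compact set $Q\subset\ell_{2}$, and $C$ is taken as the supremum of the norm over $Q$.

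You work directly with $\mathfrak{f}$ instead. You bound $\mathfrak{f}$, then observe that neither shifting nor truncating can increase the $\ell_{2}$-norm of any value. The same holds for the boundary reassignment in the definition of $\mathfrak{f}^{n}$, because it only uses distinct indices. Finally you pass to the closure, using that convergence in the metric (\ref{eqI_3}) implies pointwise convergence.

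Your route has three advantages:
\begin{itemize}
\item It avoids Lemma \ref{lF3}(2), whose proof is considerably heavier.
\item It gives the explicit constant $C=\sup_{t\in\mathbb R}\|\mathfrak{f}(t)\|$.
\item It shows that Lagrange stability serves only to make $\mathfrak{f}$ bounded, so the lemma holds for every $\mathfrak{f}\in C_{b}(\mathbb R,\ell_{2})$.
\end{itemize}

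The paper's route, on the other hand, yields a stronger fact: all values of all elements of $H(K)$ lie in one compact set. That is what is actually used later, to get uniformly small tails in Lemma \ref{l2}. The mere boundedness asserted here would not suffice for that.
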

\begin{proof} By Lemma \ref{lF3} the set $H(K)$ is a compact in
the space $C(\mathbb R,\ell_{2})$. By Lemma \ref{lAPF02} there
exists a compact $Q$ from $\ell_{2}$ such that
\begin{equation}\label{eqGF2}
\mathfrak{g}(\mathbb R)\subseteq Q \nonumber
\end{equation}
for all $t\in \mathbb R$ and $\mathfrak{g}\in H(K)$. Then we have
\begin{equation}\label{eqGF3}
C:=\sup\{\|\mathfrak{g}(t)\|:\ t\in \mathbb R,\ \mathfrak{g}\in
H(K)\}<+\infty . \nonumber
\end{equation}
Lemma is proved.
\end{proof}

\section{Cocycles Generated by Nonautonomous Lattice Dynamical
Systems}\label{Sec5}

Below we use the following conditions.

\emph{Condition }(\textbf{C1}). \label{C1} The function
$\mathfrak{f}\in C(\mathbb R,\mathfrak B)$ and it is
translation-compact, i.e., the set $\{\mathfrak{f}^{h}|\ h\in
\mathbb R\}$ is pre-compact in the space $C(\mathbb R,\mathfrak
B)$.

\emph{Condition} (\textbf{C2}). \label{C2} The function $F\in
C(\mathbb R,\mathbb R)$ is Lipschitz continuous on bounded sets
and $F(0)=0$.

Denote by $ \widetilde{F}:\ell_{2}\to \ell_{2}$ the Nemytskii
operator generated by $F$, i.e.,
$\widetilde{F}(\xi)_{i}:=F(\xi_{i})$ for any $i\in \mathfrak N$.

\emph{Condition} (\textbf{C3}). \label{C3} $sF(s)\leq -\alpha s^2$
for any $s\in \mathbb R$.

\begin{definition}\label{defL1.8} A function $F\in C(Y\times \mathfrak B,\mathfrak
B)$ is said to be globally Lipschitzian (respectively locally
Lipschitzian) w.r.t. variable $u\in \mathfrak B$
uniformly w.r.t. $y\in Y$ if there exists a positive
constant $L$ (for any bounded set $B \subset \mathfrak B$ there
exists a constant $L_{B}$) such that
\begin{equation}\label{eqL1.81}
\|F(y,u_1)-F(y,u_2)\|\le L \|u_1-u_2\|
\end{equation}
(respectively,
\begin{equation}\label{eqL1.82}
\|F(y,v_1)-F(y,v_2)\|\le L_{B} \|v_1-v_2\|)
\end{equation}
for any $u_1,u_2\in \mathfrak B$ and $y\in Y$ (respectively $v_1,
v_2 \in B \subset \mathfrak B$ and $y\in Y$).
\end{definition}

\begin{definition}\label{defL2.8}
The smallest constant $L$ (respectively $L_{B}$) with the property
(\ref{eqL1.81}) (respectively, (\ref{eqL1.82})) is called
Lipshchitz constant of function $F$ (notation $Lip(F)$,
respectively $Lip_{B}(F)$).
\end{definition}

Let $B \subset \mathfrak B$, denote by $CL(Y\times B,\mathfrak B)$
the Banach space of any Lipschitzian functions $F\in C(Y\times
B,\mathfrak B)$ equipped with the norm
\begin{equation}
||F||_{CL}:=\max\limits_{y\in Y}\|F(y,0)\|+Lip_{B}(F).\nonumber
\end{equation}

\begin{lemma}\label{l2.2} \cite{BLW_2001} Under the Condition (\textbf{C2})
it is well defined the mapping $\widetilde{F}:\ell_{2}\to
\ell_{2}$ and
\begin{equation}\label{eq2.2}
\|\widetilde{F}(\xi)-\widetilde{F}(\eta)\|\le Lip_{B}(F)\|\xi
-\eta \| \nonumber
\end{equation}
for any $\xi,\eta\in \ell_{2}$, where $\|\cdot\|^{2}:=\langle
\cdot,\cdot \rangle$ and $\|\cdot \|$ is the norm on the space
$\ell_{2}$.
\end{lemma}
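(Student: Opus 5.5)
The plan is to reduce everything to the scalar local Lipschitz property in Condition (\textbf{C2}), using the elementary observation that every coordinate of an element of $\ell_{2}$ is dominated by its norm: $|\xi_{i}|\le \|\xi\|$ for all $i\in\mathbb Z$. I will read the statement in the sense the notation $Lip_{B}(F)$ suggests. Here $B\subset\ell_{2}$ is a bounded set containing $\xi,\eta$, say $B\subseteq\{\zeta:\ \|\zeta\|\le r\}$. The constant $Lip_{B}(F)$ is then understood as the Lipschitz constant $L_{r}$ of the scalar function $F$ on the interval $[-r,r]$, which exists by (\textbf{C2}). A single global constant cannot be expected unless $F$ is globally Lipschitz, so the estimate is meant on bounded sets.

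\emph{Step 1 (well-definedness).} Fix $\xi\in\ell_{2}$ and put $r:=\|\xi\|$. All coordinates satisfy $\xi_{i}\in[-r,r]$. Since $F(0)=0$, we get $|F(\xi_{i})|=|F(\xi_{i})-F(0)|\le L_{r}|\xi_{i}|$ for every $i$. Summing the squares gives
$$\sum_{i\in\mathbb Z}|F(\xi_{i})|^{2}\le L_{r}^{2}\|\xi\|^{2}<+\infty.$$
Hence $\widetilde{F}(\xi)\in\ell_{2}$, with $\|\widetilde{F}(\xi)\|\le L_{r}\|\xi\|$.

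\emph{Step 2 (Lipschitz estimate).} Take $\xi,\eta\in B$ with $\|\xi\|,\|\eta\|\le r$. Every pair of coordinates $\xi_{i},\eta_{i}$ lies in $[-r,r]$, so $|F(\xi_{i})-F(\eta_{i})|\le L_{r}|\xi_{i}-\eta_{i}|$ for each $i$. Squaring, summing over $i\in\mathbb Z$ and taking square roots yields
$$\|\widetilde{F}(\xi)-\widetilde{F}(\eta)\|\le L_{r}\|\xi-\eta\|,$$
which is the required inequality with $Lip_{B}(F)=L_{r}$. Continuity of $\widetilde{F}$ on $\ell_{2}$ follows immediately, since every point has a bounded neighbourhood.

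There is no real obstacle in this argument. The only point needing care is the passage from a bound on the $\ell_{2}$-norm to a uniform bound on all coordinates. This is what allows one constant, obtained from local Lipschitz continuity of $F$ on a single interval, to serve for every index $i$ simultaneously. The hypothesis $F(0)=0$ is used only in Step 1, to guarantee that $\widetilde{F}(\xi)$ is square summable.
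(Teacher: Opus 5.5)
Your proof is correct. The paper gives no argument of its own and only cites \cite{BLW_2001}, and your coordinatewise argument is the standard one behind that result: $|\xi_i|\le\|\xi\|$ puts all coordinates of a bounded set in one interval $[-r,r]$, and $F(0)=0$ gives square-summability. You are also right to read the estimate with $Lip_{B}(F)$ as holding for $\xi,\eta$ in a bounded set $B$, not for all of $\ell_{2}$; a single constant valid on all of $\ell_{2}$ would require $F$ to be globally Lipschitz.
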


Under the Conditions (\textbf{C1}) and (\textbf{C2}) the system of
differential equations (\ref{eqI1}) can be written in the form of
an ordinary differential equation
\begin{equation}\label{eq2.3}
u'=\nu A u +\Phi (u)+\mathfrak{f}(t)
\end{equation}
in the Banach space $\mathfrak B=\ell_{2}$, where
$\Phi(u):=-\lambda u +\widetilde{F} (u)$ and
$(Au)_{i}:=u_{i-1}-2u_{i}+u_{i+1}$ for every $u=(u_i)_{i\in
\mathbb Z}\in \ell_{2}$. Along with equation (\ref{eq2.3}) we
consider also it $H$-class, i.e., the family equations
\begin{equation}\label{eq2.3g}
u'=\nu A u +\Phi (u)+\widetilde{\mathfrak{f}}(t),
\end{equation}
where $\widetilde{\mathfrak{f}}\in H(\mathfrak{f})$ (respectively,
$\widetilde{\mathfrak{f}}\in H(K)$).

Let $\mathbb F$ be a compact subset of $C(\mathbb R,\ell_{2})$ and
invariant w.r.t. shift dynamical system $(C(\mathbb
R,\ell_{2}),\mathbb R,\sigma)$. Consider a family of differential
equations
\begin{equation}\label{eq2.4}
u'=\nu A u +\Phi (u)+\mathfrak{g}(t),
\end{equation}
where $\mathfrak{g}\in \mathbb F$.

The family of equations (\ref{eq2.4}) can be rewritten as follows
\begin{equation}\label{eq2.5}
u'=F(\sigma(t,\mathfrak{g}),u)\ \ (\mathfrak{g}\in \mathbb
F)),\nonumber
\end{equation}
where $F:\mathbb F\times \ell_{2}\to \ell_{2}$ is defined by
$F(\mathfrak{g},u):=\nu A u+\Phi (u) +\mathfrak{g}(0)$ for every
$(\mathfrak{g},u)\in \mathbb F\times \ell_{2}$. It easy to see
that $F(\sigma(t,\mathfrak{g}),u)=\nu A u+\Phi
(u)+\mathfrak{g}(t)$ for all $(t,u,\mathfrak{g})\in \mathbb
R\times \mathfrak B \times \mathbb F$.

Let $Y$ be a complete metric space, $(Y,\mathbb R,\sigma)$ be a
dynamical system on $Y$ and $ \mathfrak{A} $ be some complete
metric space of linear closed operators acting into Banach space $
\mathfrak B $. Consider the following linear differential equation
\begin{equation}\label{eqLS01.81}
x'=A(\sigma(t,y))x,\  \ (y\in Y)
\end{equation}
where $A\in C(Y,\mathfrak{A})$. We assume that the following
conditions are fulfilled for the equation (\ref{eqLS01.81}):
\begin{enumerate}
\item[a.] for every $ u \in \mathfrak B $ and $y\in Y $ the
equation (\ref{eqLS01.81}) has exactly one solution that is
defined on $ \mathbb R_{+} $ and satisfies the condition $ \varphi
(0,u,y) = u ;$ \item[b.] the mapping $ \varphi : (t,u,y) \to
\varphi (t,u,y) $ is continuous in the topology of $ \mathbb R_{+}
\times \mathfrak B \times Y$.
\end{enumerate}

Denote by $U(t,y):=\varphi(t,\cdot,y)$ for all $(t,y)\in \mathbb
R_{+}\times Y$.

Consider an evolutionary differential equation
\begin{equation}\label{eqSL1}
u'=A(\sigma(t,y))u + F(\sigma(t,y),u) \ \ (y\in Y)
\end{equation}
in the Banach space $\mathfrak B$, where $F$ is a nonlinear
continuous mapping ("small" perturbation) acting from $Y\times
\mathfrak B$ into $\mathfrak B$.

\begin{definition}
A function $u:[0,a)\mapsto \mathfrak B$ is said to be a weak
(mild) solution of the equation (\ref{eqSL1}) passing through the
point $x\in \mathfrak B$ at the initial moment $t=0$ (notation
$\varphi(t,u,y)$) if $u\in C([0,T],\mathfrak B)$ and satisfies the
integral equation
\begin{equation}\label{eqSL3}
u(t)=U(t,y)u+\int_{0}^{t}U(t-s,\sigma(s,y))F(\sigma(s,y),u(s))ds\nonumber
\end{equation}
for every $t\in [0,T]$ and $0<T<a$.
\end{definition}

\begin{theorem}\label{thLS1.S1} \cite[Ch.VI]{Che_2020}
Suppose that the function $F\in C(Y\times \mathfrak B,\mathfrak
B)$ is locally Lipschitzian. Let $x_0\in \mathfrak B$, $r>0 $ and
the conditions listed above be fulfilled. Then, there exist
positive numbers $\delta =\delta (x_0,r)$ and $T=T(x_0,r) $ such
that the equation (\ref{eqSL1}) admits a unique solution $\varphi
(t,u,y)$ ($u\in B[u_0,\delta]:=\{u\in \mathfrak B \ | \ \| u -u_0
\| \le \delta \} $) defined on the interval $[0,T]$ with the
conditions: $\varphi (0,u,y)=u$, $\| \varphi (t,u,y)-u_0\| \le r$
for all $t\in [0,T]$ and the mapping $ \varphi : [0,T]\times
B[u_0,\delta] \times Y \to \mathfrak B\ ( (t,u,y)\mapsto \varphi
(t,u,y))$ is continuous.
\end{theorem}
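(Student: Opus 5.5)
We treat this as a local existence theorem for mild solutions of (\ref{eqSL1}), proved by a contraction argument on a closed ball of continuous functions, uniformly in $y\in Y$. The steps are as follows.

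\textbf{Step 1: uniform bounds.} First fix the needed uniform bounds. By the standing assumptions a.--b., the linear cocycle $U(t,y)$ is continuous in $(t,u,y)$. We want a constant $M\ge 1$ with $\|U(t,y)\|\le M$ for $t\in[0,1]$ and $y\in Y$.
\begin{itemize}
\item If $Y$ is compact (the case of interest, $Y=H(\mathfrak f)$ or $H(K)$), this follows from the uniform boundedness principle. For fixed $u$, the set $\{U(t,y)u : t\in[0,1],\ y\in Y\}$ is the continuous image of a compact set, hence bounded.
\item Otherwise we take uniform boundedness of $U$ on $[0,1]\times Y$ as part of the standing hypotheses, as is implicit in \cite{Che_2020}.
\end{itemize}
Let $B:=B[u_0,r]$. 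By local Lipschitzianity we get $\|F(y,v_1)-F(y,v_2)\|\le L_B\|v_1-v_2\|$ for $v_i\in B$. Writing $N:=\sup_{y}\|F(y,u_0)\|$, we then have $\|F(y,v)\|\le N+L_B r$ on $B$.

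\textbf{Step 2: choice of $\delta$ and $T$ and the contraction.} Next we choose $T\le 1$ and $\delta$ small so that the operator
$$(\mathcal T_{u,y}w)(t):=U(t,y)u+\int_0^t U(t-s,\sigma(s,y))F(\sigma(s,y),w(s))\,ds$$
maps $E_T:=\{w\in C([0,T],\mathfrak B): \|w(t)-u_0\|\le r\}$ into itself and is a contraction.

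For the self-mapping property we estimate
$$\|(\mathcal T w)(t)-u_0\|\le M\|u-u_0\|+\|U(t,y)u_0-u_0\|+TM(N+L_Br).$$
The middle term is the main obstacle. We need $\sup_{y}\sup_{t\le T}\|U(t,y)u_0-u_0\|\to 0$ as $T\to0$, which is uniform continuity in $y$ at $t=0$.
\begin{itemize}
\item For compact $Y$ this follows from continuity of $(t,y)\mapsto U(t,y)u_0$ at $\{0\}\times Y$ together with $U(0,y)u_0=u_0$, by a standard compactness argument: otherwise pick $t_k\to0$ and $y_k\to y$ along a subsequence, and derive a contradiction.
\item In general we would first try to localize: prove the theorem with $T$, $\delta$ depending on a neighbourhood of a given $y_0$. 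Alternatively, we assume the uniform version, which is the form used in \cite{Che_2020}.
\end{itemize}
So we choose $T$ with $\|U(t,y)u_0-u_0\|\le r/3$ and $TM(N+L_Br)\le r/3$, and also $TML_B\le 1/2$. Then we take $\delta=r/(3M)$. With these choices $\mathcal T_{u,y}$ is a $1/2$-contraction on the complete space $E_T$, uniformly in $(u,y)\in B[u_0,\delta]\times Y$.

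\textbf{Step 3: existence, uniqueness, continuity.} The Banach fixed point theorem gives a unique fixed point $\varphi(\cdot,u,y)\in E_T$. This is the mild solution with $\varphi(0,u,y)=u$ and $\|\varphi(t,u,y)-u_0\|\le r$. Uniqueness among all mild solutions staying in $B$ follows because any such solution is a fixed point in $E_T$.

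For continuity of $(t,u,y)\mapsto\varphi$ we use the uniform contraction principle: a fixed point depends continuously on parameters whenever the contraction is uniform and $\mathcal T_{u,y}w$ is continuous in $(u,y)$ for fixed $w$. The latter holds for the following reasons.
\begin{itemize}
\item $U(t,y)u$ is jointly continuous in $(t,u,y)$ and uniformly bounded in operator norm.
\item The integrand is continuous in $(s,y)$ and bounded, so by dominated convergence the integral is continuous in $y$, uniformly in $t\in[0,T]$. Here one again uses compactness of $[0,T]$ to upgrade pointwise continuity to convergence in $C([0,T],\mathfrak B)$.
\end{itemize}
Combining this with continuity of each $\varphi(\cdot,u,y)$ in $t$ yields the joint continuity on $[0,T]\times B[u_0,\delta]\times Y$.
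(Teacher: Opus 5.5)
The paper gives no proof of this statement. It is quoted from \cite[Ch.VI]{Che_2020}, so there is no in-paper argument to compare against. Your argument is a Banach fixed point for the variation-of-constants operator on $E_T$, uniform in $(u,y)$, followed by the uniform contraction principle for continuous dependence. This is the standard proof of such a local result, and it is correct when $Y$ is compact. That is the only case the paper needs: Theorem~\ref{th1.1} applies the statement with $Y=\mathbb F$ compact.

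Two points in the compact case should be written out:
\begin{enumerate}
\item Say why $N=\sup_{y}\|F(y,u_0)\|<\infty$: the map $y\mapsto F(y,u_0)$ is continuous on compact $Y$.
\item Obtain continuity of the integral term in $y$, uniformly in $t$, from uniform continuity of $(t,s,y')\mapsto U(t-s,\sigma(s,y'))F(\sigma(s,y'),w(s))$ on the compact set $\{0\le s\le t\le T\}\times(\{y_k\}\cup\{y\})$. Pointwise convergence in $t$ plus compactness of $[0,T]$ does not by itself give uniformity.
\end{enumerate}

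For non-compact $Y$, replace your hedging (``localize'', ``assume the uniform version'') with a firm statement. The theorem as literally formulated, with $Y$ only a complete metric space, is false, so extra hypotheses are unavoidable. Localizing near a given $y_0$ cannot help either, since $T$ and $\delta$ must be independent of $y\in Y$.

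Here is a counterexample. Take $\mathfrak B=\mathbb R$ and $A\equiv 0$, so $U(t,y)=\mathrm{id}$. Take $Y=\mathbb R$ with $\sigma(t,y)=y+t$, and $F(y,u)=y$, which is continuous and Lipschitz in $u$ with constant $0$. Then $\varphi(t,u_0,y)=u_0+yt+t^2/2$. For every $T>0$, the requirement $\|\varphi(t,u_0,y)-u_0\|\le r$ on $[0,T]$ fails once $y>r/T$.

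In general your proof actually uses three hypotheses:
\begin{enumerate}
\item $\sup_{t\in[0,1],\,y\in Y}\|U(t,y)\|<\infty$;
\item $\sup_{y\in Y}\|F(y,u_0)\|<\infty$;
\item $\sup_{y\in Y}\sup_{0\le t\le T}\|U(t,y)u_0-u_0\|\to 0$ as $T\to 0$.
\end{enumerate}
You named the first and third. The second is used silently when you introduce $N$, and the example above shows it cannot be dropped.
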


\begin{remark}\label{remSL1} Under the conditions of Theorem
\ref{thLS1.S1} the following statements hold \cite[Ch.IV]{Bur}:
\begin{enumerate}
\item if $\psi$ is a solution of the equation (\ref{eqSL1}) on
some interval $[0,h]$, then $\psi$ can be extended over a maximal
interval of existence $[0,\alpha)$; \item if the solution $\psi$
is bounded, then $\psi$ can be extended on the interval
$[0,+\infty)$.
\end{enumerate}
\end{remark}

\begin{theorem}\label{th1.1} Under the Conditions (\textbf{C1}) and
(\textbf{C2}) there exist positive numbers $\delta =\delta
(u_0,r)$ and $T=T(u_0,r) $ such that the equation (\ref{eqSL1})
admits a unique solution $\varphi (t,u,g)$ ($u\in B[u_0,\delta]$)
defined on the interval $[0,T]$ with the conditions: $\varphi
(0,u,g)=u$, $\|\varphi (t,u,\mathfrak{g})-u_0\| \le r$ for every
$t\in [0,T]$ and the mapping $ \varphi : [0,T]\times B[u_0,\delta]
\times \mathbb F \to \ell_{2}\ ( (t,u,\mathfrak{g})\mapsto \varphi
(t,u,\mathfrak{g}))$ is continuous.
\end{theorem}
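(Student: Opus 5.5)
The plan is to reduce Theorem \ref{th1.1} to the abstract local existence result, Theorem \ref{thLS1.S1}. We work on the base $Y:=\mathbb F$, a compact subset of $C(\mathbb R,\ell_{2})$ that is invariant under the Bebutov flow $\sigma$. The equation (\ref{eq2.4}) is written in the form (\ref{eqSL1}) by taking the linear part $A(\mathfrak g):=\nu A$ constant in $\mathfrak g$, and the nonlinear part $\mathcal F(\mathfrak g,u):=\Phi(u)+\mathfrak g(0)=-\lambda u+\widetilde F(u)+\mathfrak g(0)$. Then $\mathcal F(\sigma(t,\mathfrak g),u)=\Phi(u)+\mathfrak g(t)$, as noted after (\ref{eq2.4}). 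It remains to check the hypotheses of Theorem \ref{thLS1.S1}.

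\emph{Linear part.} The operator $A$, $(Au)_i=u_{i-1}-2u_i+u_{i+1}$, is bounded on $\ell_{2}$. Indeed $A=S+S^{*}-2I$ with $S$ the shift, so $\|A\|\le 4$. Hence $U(t,\mathfrak g)=e^{\nu tA}$ is a uniformly continuous group, independent of $\mathfrak g$. Conditions a and b preceding (\ref{eqSL1}) therefore hold trivially: there is a unique solution $e^{\nu tA}u$, and it is jointly continuous in $(t,u,\mathfrak g)$.

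\emph{Nonlinear part.} We need $\mathcal F$ continuous on $\mathbb F\times\ell_{2}$ and locally Lipschitz in $u$ uniformly in $\mathfrak g$. Continuity in $\mathfrak g$ comes from the evaluation map $\mathfrak g\mapsto\mathfrak g(0)$, which is continuous in the compact-open topology. The $\mathfrak g$-term cancels in differences, so the Lipschitz property reduces to that of $u\mapsto-\lambda u+\widetilde F(u)$ on bounded sets. This is where the main care is needed, since Lemma \ref{l2.2} is only stated loosely.

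Take a ball $B=\{\|u\|\le R\}$ in $\ell_{2}$. Every coordinate satisfies $|u_i|\le R$. By (\textbf{C2}), $F$ is Lipschitz on $[-R,R]$ with some constant $L_R$, and $F(0)=0$. Hence
\[
\sum_i|F(\xi_i)-F(\eta_i)|^2\le L_R^2\sum_i|\xi_i-\eta_i|^2 ,
\]
and taking $\eta=0$ shows that $\widetilde F$ maps $\ell_{2}$ into $\ell_{2}$. Therefore $\|\widetilde F(\xi)-\widetilde F(\eta)\|\le L_R\|\xi-\eta\|$ on $B$, which gives the local Lipschitz bound $\lambda+L_R$ for $\Phi$. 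The same estimate with $\eta$ fixed gives continuity of $\widetilde F$. By (\textbf{C1}), or simply by compactness of $\mathbb F$ via Lemma \ref{lAPF02}, the forcing $\mathfrak g(0)$ is bounded, so $\mathcal F$ is bounded on bounded sets as well.

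With these hypotheses verified, Theorem \ref{thLS1.S1} applies directly with $Y=\mathbb F$. It gives $\delta(u_0,r)$ and $T(u_0,r)$, a unique mild solution $\varphi(t,u,\mathfrak g)$ for $u\in B[u_0,\delta]$ that stays in the $r$-ball around $u_0$ on $[0,T]$, and joint continuity on $[0,T]\times B[u_0,\delta]\times\mathbb F$. Because $A$ is bounded and $\mathcal F(\sigma(t,\mathfrak g),u)$ is continuous in $t$, the mild solution is in fact a classical $C^1$ solution of (\ref{eq2.4}). This identifies $\varphi$ with the solution in the statement.
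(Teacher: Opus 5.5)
Your proposal is correct and follows the paper's route: you reduce to Theorem \ref{thLS1.S1} by checking that the right-hand side is continuous on $\mathbb F\times\ell_{2}$, locally Lipschitz in $u$ uniformly in $\mathfrak g$, and has bounded forcing $\mathfrak g(0)$. The only difference is cosmetic: you take the bounded operator $\nu A$ as the linear part with $U(t,\mathfrak g)=e^{\nu tA}$, whereas the paper folds $\nu Au$ into the nonlinearity $F(\mathfrak g,u)=\nu Au+\Phi(u)+\mathfrak g(0)$; and your restriction of the Nemytskii Lipschitz estimate to balls makes precise what Lemma \ref{l2.2} states only loosely.
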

\begin{proof}
Assume that the Conditions (\textbf{C1}) and (\textbf{C2}) are
fulfilled. Consider the equation (\ref{eq2.4}), where
$F(\mathfrak{g},u):=\nu A u +\Phi(u)+\mathfrak{g}(0)$ for all
$(u,\mathfrak{g})\in \ell_{2}\times \mathbb F$. It easy to check
that under the conditions of Theorem the mapping $F$ possesses the
following properties:
\begin{enumerate}
\item $F$ is continuous; \item the mapping $F$ is locally
Lipschitzian in $u\in \ell_{2}$ uniformly w.r.t. $\mathfrak{g}\in
\mathbb F$, i.e., for every bounded subset $B\subset \ell_{2}$
there exists a positive constant $L_{F}(B)$ such that
\begin{equation}\label{eqLB1}
\|F(u_1,\mathfrak{g})-F(u_2,\mathfrak{g})\|\le
L_{F}(B)\|u_1-u_2\|\nonumber
\end{equation}
for all $u_1,u_2\in B$ and $g\in \mathbb F$; \item there exists a
positive constant $A$ such that
\begin{equation}\label{eqA1}
\|F(\mathfrak{g},0)\|\le C\nonumber
\end{equation}
for every $\mathfrak{g}\in \mathbb F$.
\end{enumerate}

Now to finish the proof of Theorem it suffices to apply Theorem
\ref{thLS1.S1}.
\end{proof}

\begin{lemma}\label{lBC_1} Assume that the conditions (\textbf{C1})--(\textbf{C3}) holds and $\mathfrak{g} \in \mathbb F$.
Then, for every $T > 0$, the solution $v(t)$ of the problem
(\ref{eq2.4}) and $v(0)=v_0 \in \ell_{2}$ satisfies
\[
\|v(t)\| \le M \;,\quad \text{for all } 0 \le t \le T\,,
\]
where $M$ is a constant depending only on the data $(\lambda, C,
\|v_0\|)$ and $T$, where $C:=\sup\{\|\mathfrak{g}(t)\|\ | \ t\in
\mathbb R,\ \mathfrak{g}\in \mathbb F\}$.
\end{lemma}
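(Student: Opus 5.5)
The plan is a standard energy estimate in $\ell_2$. Take the scalar product of equation (\ref{eq2.4}) with the solution $v(t)$ on its maximal interval of existence $[0,\alpha)$ given by Theorem \ref{th1.1} and Remark \ref{remSL1}. This gives
\begin{equation*}
\frac{1}{2}\frac{d}{dt}\|v(t)\|^{2}=\nu\langle Av,v\rangle-\lambda\|v\|^{2}+\langle\widetilde{F}(v),v\rangle+\langle\mathfrak{g}(t),v\rangle .
\end{equation*}
For the linear part, use the discrete summation by parts $\langle Au,u\rangle=-\sum_{i\in\mathbb Z}|u_{i+1}-u_{i}|^{2}\le 0$. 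This is the infinite-lattice analogue of $A_n=B_n^{*}B_n$ in Section \ref{Sec3}, and it is justified for $u\in\ell_{2}$ because $A$ is a bounded operator on $\ell_2$. For the nonlinear part, Condition (\textbf{C3}) applied componentwise gives $\langle\widetilde{F}(v),v\rangle=\sum_i v_iF(v_i)\le-\alpha\|v\|^{2}\le 0$. For the forcing term, Young's inequality gives $\langle\mathfrak{g}(t),v\rangle\le\frac{\lambda}{2}\|v\|^{2}+\frac{1}{2\lambda}\|\mathfrak{g}(t)\|^{2}\le\frac{\lambda}{2}\|v\|^{2}+\frac{C^{2}}{2\lambda}$. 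Here $C<\infty$ because $\mathbb F$ is compact, by Lemma \ref{lAPF02}, in the same way as in Lemma \ref{lF4}.

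Combining these estimates yields the differential inequality
\begin{equation*}
\frac{d}{dt}\|v(t)\|^{2}+\lambda\|v(t)\|^{2}\le\frac{C^{2}}{\lambda}.
\end{equation*}
Gronwall's lemma then gives $\|v(t)\|^{2}\le e^{-\lambda t}\|v_0\|^{2}+\frac{C^{2}}{\lambda^{2}}(1-e^{-\lambda t})\le\|v_0\|^{2}+C^{2}/\lambda^{2}$. So one can take $M:=(\|v_0\|^{2}+C^{2}/\lambda^{2})^{1/2}$. This constant depends only on $(\lambda,C,\|v_0\|)$, and in fact it does not depend on $T$ at all, which is stronger than the statement requires.

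The main obstacle is rigor rather than the computation. First, $t\mapsto\|v(t)\|^{2}$ must be differentiable so that the identity above holds. Since $A$ and $\widetilde F$ (the latter locally Lipschitz by Lemma \ref{l2.2}) map $\ell_2$ into $\ell_2$ and $\mathfrak g$ is continuous, the mild solution is in fact a classical $C^{1}$ solution in $\ell_{2}$, and the chain rule applies. Second, the argument initially runs only on $[0,\alpha)$, so circularity must be avoided. The bound obtained is uniform on $[0,\alpha)$, so by Remark \ref{remSL1} the solution is bounded and therefore extends, which gives $\alpha=+\infty$. Hence the estimate holds on every $[0,T]$.
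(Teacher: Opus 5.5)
Your proof is correct and follows the paper's own argument: the energy identity for $\|v(t)\|^{2}$ on the maximal interval, $\langle Av,v\rangle\le 0$, condition (\textbf{C3}), Young's inequality with weight $\lambda$, and Gronwall's lemma. The one difference is that you drop the term $-2\alpha\|v\|^{2}$, so you only use $F(s)s\le 0$ (cf. Remark \ref{remDC1}); this gives the $T$-independent bound $M=(\|v_0\|^{2}+C^{2}/\lambda^{2})^{1/2}$ and avoids the misplaced parentheses in the paper's displayed Gronwall estimate and its $T$-dependent $M$.
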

\begin{proof} Let $v(t)$ be a solution of the equation
(\ref{eq2.4}) with the initial condition $v(0)=v_0$ defined on the
maximal interval $[0,h)$. Denote by $y(t):=\|v(t)\|^{2}$ then we
have
\begin{eqnarray}\label{eqDE_1}
& y'(t)=2\langle v'(t),v(t)\rangle =2\langle \nu A
v(t),v(t)\rangle +2\langle \Phi(v(t)),v(t)\angle +2\langle \mathfrak{g}(t),v(t)\rangle = \nonumber \\
& -2\nu \|D^{+}v(t)\|^{2}-2\lambda \|v(t)\|^{2}+2\langle
\tilde{F}(v(t)),v(t)\rangle +2\langle \mathfrak{g}(t),v(t)\rangle
\end{eqnarray}
for all $t\in [0,h)$.

Since
\[
|\langle \mathfrak{g}(t),v(t)\rangle| \le
\|\mathfrak{g}(t)\|\|v(t)\| \le \frac{1}{2}\lambda \|v(t)\|^2 +
\frac{1}{2\lambda}\|\mathfrak{g}(t)\|^2\,,
\]
using (\textbf{C3}) from (\ref{eqDE_1}) we get
\begin{equation}\label{eqDE2.1}
y'(t)=2\langle v'(t),v(t)\rangle \le -(\lambda +2\alpha)y(t)+
\frac{C^{2}}{\lambda}
\end{equation}
for every $t\in [0,h)$. By Gronwall's lemma from the inequality
(\ref{eqDE2.1}), taking into account that $y(0)=\|v(0)\|^{2}$, we
obtain
\begin{equation}\label{eqDE_03}
y(t)\le e^{-(\lambda
+2\alpha)t}\big{(}(\|v(0)\|^{2}-\frac{C^{2}}{\lambda (\lambda
+2\alpha)})+\frac{C^{2}}{\lambda (\lambda
+2\alpha)}\big{)}\nonumber
\end{equation}
and, consequently,
\begin{equation}\label{eqDE_004}
\|v(t)\|\le M \nonumber
\end{equation}
for all $0\le t\le T<h$, where
\begin{equation}\label{eqDE5}
M=M(T,\|v_0\|,C):=\Big{(}e^{-(\lambda
+2\alpha)T}\big{(}(\|v_0\|^{2}-\frac{C^{2}}{\lambda (\lambda
+2\alpha)})+\frac{C^{2}}{\lambda (\lambda
+2\alpha)}\big{)}\Big{)}^{1/2}.\nonumber
\end{equation}
Lemma is proved.
\end{proof}

\begin{remark}\label{remDC1} Lemma \ref{lBC_1} remains true if we
replace the Condition (\textbf{C3}) by the weaker condition:
$F(s)s\le 0$ for all $s\in \mathbb R$.
\end{remark}

\begin{theorem} Under the Conditions (\textbf{C1})-(\textbf{C3}) the following statements hold:
\begin{enumerate}
    \item for every $(v,\mathfrak{g})\in \ell_{2}\times \mathbb F$ there exists a unique
    solution $\varphi(t,v,\mathfrak{g})$ of the equation (\ref{eq2.4}) passing
    through the point $v$ at the initial moment $t=0$ and defined on
    the semi-axis $\mathbb R_{+}:=[0,+\infty)$; \item
    $\varphi(0,v,\mathfrak{g})=v$ for all $(v,\mathfrak{g})\in \ell_{2}\times \mathbb F$; \item
    $\varphi(t+\tau,v,\mathfrak{g})=\varphi(t,\varphi(\tau,v,\mathfrak{g}),\mathfrak{g}^{\tau})$ for
    all $t,\tau\in \mathbb R_{+}$, $v\in \ell_{2}$ and $\mathfrak{g}\in \mathbb F$;
     \item the mapping
    $\varphi :\mathbb R_{+}\times \ell_{2}\times \mathbb F\to \ell_{2}$
    ($(t,v,\mathfrak{g})\to \varphi(t,v,\mathfrak{g}))$ for all $(t,v,\mathfrak{g})\in \mathbb
    R_{+}\times \ell_{2}\times \mathbb F$ is continuous.
\end{enumerate}
\end{theorem}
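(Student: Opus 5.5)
We obtain the cocycle by combining local existence (Theorem \ref{th1.1}), the a priori bound of Lemma \ref{lBC_1}, and the standard uniqueness argument. Throughout, $\mathbb F$ is compact and shift invariant, and $C:=\sup\{\|\mathfrak g(t)\|\ |\ t\in\mathbb R,\ \mathfrak g\in\mathbb F\}<\infty$. This supremum is finite by Lemma \ref{lAPF02}, exactly as in Lemma \ref{lF4}.

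\emph{Step 1 (global existence and uniqueness).} Fix $(v,\mathfrak g)\in\ell_2\times\mathbb F$. The operator $A$ is bounded on $\ell_2$ with $\|A\|\le 4$. By Lemma \ref{l2.2} the map $u\mapsto \nu Au+\Phi(u)+\mathfrak g(t)$ is locally Lipschitz in $u$, uniformly in $t$. Theorem \ref{th1.1} therefore gives a unique local solution, and Remark \ref{remSL1} extends it to a maximal interval $[0,h)$. Suppose $h<\infty$. Lemma \ref{lBC_1}, applied with $T<h$ arbitrary, bounds $\|v(t)\|$ on $[0,h)$ by a constant depending only on $\lambda,C,\|v\|$ and $h$. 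Indeed, the differential inequality (\ref{eqDE2.1}) gives $\|v(t)\|^2\le \|v\|^2+C^2/(\lambda(\lambda+2\alpha))$ for every $t$. So the solution is bounded, and by Remark \ref{remSL1} it extends beyond $h$, a contradiction. Hence $h=+\infty$. Uniqueness on all of $\mathbb R_+$ follows from the local uniqueness in Theorem \ref{th1.1}, or directly from Gronwall's inequality applied to the difference of two solutions on a bounded set where $F$ is Lipschitz. This proves item 1; item 2 holds by definition.

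\emph{Step 2 (cocycle identity).} Fix $\tau\ge0$ and set $w(t):=\varphi(t+\tau,v,\mathfrak g)$. Then $w'(t)=\nu Aw+\Phi(w)+\mathfrak g(t+\tau)=\nu Aw+\Phi(w)+\mathfrak g^\tau(t)$ and $w(0)=\varphi(\tau,v,\mathfrak g)$. Since $\mathfrak g^\tau\in\mathbb F$ by invariance, uniqueness from Step 1 gives $w(t)=\varphi(t,\varphi(\tau,v,\mathfrak g),\mathfrak g^\tau)$, which is item 3.

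\emph{Step 3 (continuity), the main obstacle.} Theorem \ref{th1.1} only gives continuity on a short interval $[0,T(u_0,r)]$ near a given point, so we must pass to arbitrary compact time intervals. We argue with sequences. Let $(t_k,v_k,\mathfrak g_k)\to(t_0,v_0,\mathfrak g_0)$ and fix $T>\sup_k t_k$. By the a priori bound of Step 1, every $\varphi(t,v_k,\mathfrak g_k)$ with $t\in[0,T]$ lies in a ball $B$ of radius $R$ independent of $k$; let $L$ be the Lipschitz constant of $u\mapsto\nu Au+\Phi(u)$ on $B$. Write the integral equations for $u_k(t)=\varphi(t,v_k,\mathfrak g_k)$ and $u_0(t)=\varphi(t,v_0,\mathfrak g_0)$ and subtract them:
\begin{equation*}
\|u_k(t)-u_0(t)\|\le \|v_k-v_0\|+L\int_0^t\|u_k(s)-u_0(s)\|ds+T\max_{0\le s\le T}\|\mathfrak g_k(s)-\mathfrak g_0(s)\|.
\end{equation*}
Convergence in the metric (\ref{eqI_3}) means uniform convergence on $[0,T]$, so the last term tends to $0$. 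Gronwall's inequality then yields $\max_{[0,T]}\|u_k-u_0\|\to0$. Combined with the continuity of $u_0$ in $t$, this gives $\varphi(t_k,v_k,\mathfrak g_k)\to\varphi(t_0,v_0,\mathfrak g_0)$, which is item 4. The delicate point is the uniformity of the ball $B$ in $k$; it holds because the bound from Lemma \ref{lBC_1} depends only on $\sup_k\|v_k\|<\infty$, on $C$ and on $T$.
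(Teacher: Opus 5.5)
Your proof is correct. For items 1--3 it follows the paper's route: local solvability from Theorem \ref{th1.1} (Theorem \ref{thLS1.S1} applied to (\ref{eq2.4})), the a priori bound of Lemma \ref{lBC_1}, and extension of bounded solutions by Remark \ref{remSL1}. The paper calls items 2 and 3 evident, and your uniqueness-plus-shift-invariance argument is what that amounts to.

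The real difference is item 4. The paper only says that continuity follows from Theorem \ref{thLS1.S1}. That theorem gives joint continuity only on a short interval $[0,T(u_0,r)]$ and a small ball around $u_0$, so the passage to arbitrary $t\in\mathbb R_+$ is left implicit. Making it rigorous along those lines would mean covering a compact piece of the trajectory by finitely many such neighbourhoods and chaining them through the cocycle identity. Your Step 3 replaces all of this with a single Gronwall estimate on $[0,T]$. This works for two reasons: $A$ is bounded on $\ell_2$, so the plain integral equation can replace the mild formulation, and the a priori bound is uniform in $k$. It needs only Lipschitz continuity of $\nu A+\Phi$ on bounded sets, not the local continuity theorem.

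Step 1 has a side benefit. You derive $\|v(t)\|^2\le\|v\|^2+C^2/(\lambda(\lambda+2\alpha))$ directly from (\ref{eqDE2.1}), which is uniform in $t$. This avoids the misplaced parentheses in the explicit constant $M$ of Lemma \ref{lBC_1}. As printed, that constant reduces to $e^{-(\lambda+2\alpha)T/2}\|v_0\|$, which is not even a bound at $t=0$.
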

\begin{proof}
The first statement of Theorem follows from Lemma \ref{lBC_1},
Theorem \ref{thLS1.S1} and Remark \ref{remSL1}.

The second and third statements are evident. The fourth statement
follows from Theorem \ref{thLS1.S1}.
\end{proof}

\begin{coro}\label{corH1}
Under the conditions of Theorem \ref{th1.1} the equation
(\ref{eq2.3}) (respectively, the family of equations
(\ref{eq2.4})) generates a cocycle $\langle
\ell_{2},\varphi,(\mathbb F,\mathbb R,\sigma)\rangle$ over the
shift dynamical system $(\mathbb F,\mathbb R,\sigma)$ with the
fibre $\ell_{2}$.
\end{coro}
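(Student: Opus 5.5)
The corollary is essentially a repackaging of the preceding theorem in the language of Definition \ref{def1.0.19}. We take $Y:=\mathbb F$, a compact subset of $C(\mathbb R,\ell_{2})$ that is invariant under shifts, and we take $W:=\ell_{2}$. The shift $(\mathbb F,\mathbb R,\sigma)$ is then a well-defined group dynamical system: it is the restriction of Bebutov's system $(C(\mathbb R,\ell_{2}),\mathbb R,\sigma)$ to a closed invariant set, so continuity is inherited (Lemma \ref{l1.0.5}). For the single equation (\ref{eq2.3}) we choose $\mathbb F:=H(\mathfrak f)$; for the approximations we choose $\mathbb F:=H(K)$. The first set is compact by Condition (\textbf{C1}), and $H(K)$ is compact by Lemma \ref{lF3}. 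Both sets are closed and shift-invariant by construction.

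Next we define $\varphi(t,v,\mathfrak g)$ as the unique solution of (\ref{eq2.4}) with $\varphi(0,v,\mathfrak g)=v$. The preceding theorem gives global existence on $\mathbb R_{+}$. The uniform bound $C=\sup\{\|\mathfrak g(t)\|\}<\infty$ used in Lemma \ref{lBC_1} holds on the compact set $\mathbb F$ by Lemma \ref{lAPF02} (see also Lemma \ref{lF4}). Property (a) of Definition \ref{def1.0.19} is item 2 of that theorem, and continuity of $\varphi$ on $\mathbb R_{+}\times\ell_{2}\times\mathbb F$ is item 4.

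The cocycle identity (b) comes from uniqueness. For fixed $\tau\ge 0$, set $w(t):=\varphi(t+\tau,v,\mathfrak g)$. Then $w'(t)=\nu Aw+\Phi(w)+\mathfrak g(t+\tau)=\nu Aw+\Phi(w)+\mathfrak g^{\tau}(t)$, so $w$ solves (\ref{eq2.4}) with right-hand side $\mathfrak g^{\tau}=\sigma(\tau,\mathfrak g)\in\mathbb F$ and initial value $\varphi(\tau,v,\mathfrak g)$. By uniqueness (Theorem \ref{th1.1}), $w(t)=\varphi(t,\varphi(\tau,v,\mathfrak g),\sigma(\tau,\mathfrak g))$. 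This is item 3 of the theorem. If solutions are understood in the mild sense, the same argument goes through by the semigroup property of $U$ inside the integral equation.

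The only point needing care is continuity, jointly in $(t,v,\mathfrak g)$ on all of $\mathbb R_{+}$ rather than just on the local interval $[0,T]$ of Theorem \ref{th1.1}. We extend it by covering $[0,T]$ with finitely many local intervals and using the a priori bound of Lemma \ref{lBC_1}, which is uniform in $\mathfrak g\in\mathbb F$ and locally uniform in $v$. We then chain the local continuity statements through the cocycle identity, which gives continuity on $[0,T]$ for every $T$.
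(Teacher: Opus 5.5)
Your proposal is correct and takes the same route as the paper, whose proof is the single line that the corollary follows from the well-posedness results and Definition \ref{def1.0.19}; you supply the details it leaves implicit, namely the cocycle identity obtained from uniqueness after shifting the forcing, and the extension of continuity from the local interval of Theorem \ref{th1.1} to all of $\mathbb R_{+}$. Note that global existence on $\mathbb R_{+}$ comes from the theorem immediately preceding the corollary and Lemma \ref{lBC_1}, i.e.\ from (\textbf{C3}) or at least $F(s)s\le 0$, and not from (\textbf{C1})--(\textbf{C2}) alone as ``the conditions of Theorem \ref{th1.1}'' would suggest; you are right to use it, because without a sign condition solutions can blow up in finite time, and the paper's proof tacitly relies on the same assumption.
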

\begin{proof} This statement directly follows from Theorem
\ref{th1.1} and Definition \ref{def1.0.19}.
\end{proof}

\begin{theorem}\label{thCGA2} \cite{CS_2026} Under the Conditions
(\textbf{C1})-(\textbf{C3}) the equation (\ref{eq2.3}) (the
cocycle $\varphi$ generated by the equation (\ref{eq2.3})) has a
compact global attractor $\{I_{\mathfrak{g}}|\ \mathfrak{g}\in
H(\mathfrak{f})\}$.
\end{theorem}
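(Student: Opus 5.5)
The plan is to apply Theorem \ref{thCGA1} to the cocycle $\langle \ell_{2},\varphi,(H(\mathfrak{f}),\mathbb R,\sigma)\rangle$ from Corollary \ref{corH1}, with $\mathbb F:=H(\mathfrak{f})$. By Condition (\textbf{C1}) the hull $H(\mathfrak{f})$ is a compact, shift-invariant subset of $C(\mathbb R,\ell_{2})$. So it remains to check two properties of the cocycle: that it is \emph{dissipative} in the sense of Definition \ref{defCGA1}, and that it is \emph{asymptotically compact}. Theorem \ref{thCGA1} then gives the compact global attractor $\{I_{\mathfrak{g}}\mid \mathfrak{g}\in H(\mathfrak{f})\}$.

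\emph{Dissipativity.} I would reuse the energy estimate in the proof of Lemma \ref{lBC_1}. Let $y(t)=\|\varphi(t,v,\mathfrak{g})\|^{2}$. Condition (\textbf{C3}) and $\langle \nu A v,v\rangle=-\nu\|D^{+}v\|^{2}\le 0$ give
$$y'\le-(\lambda+2\alpha)y+C^{2}/\lambda,$$
where $C=\sup\{\|\mathfrak{g}(t)\| \mid t\in\mathbb R,\ \mathfrak{g}\in H(\mathfrak{f})\}<\infty$; this supremum is finite by Lemma \ref{lAPF02}, as in Lemma \ref{lF4}. By Gronwall,
$$\|\varphi(t,v,\mathfrak{g})\|^{2}\le e^{-(\lambda+2\alpha)t}\|v\|^{2}+\frac{C^{2}}{\lambda(\lambda+2\alpha)}.$$
Hence the ball $K$ of radius $R_{0}^{2}:=1+C^{2}/(\lambda(\lambda+2\alpha))$ absorbs every bounded set $B$, uniformly in $\mathfrak{g}$, after a time $L(B)$ depending only on $\sup_{v\in B}\|v\|$.

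\emph{Asymptotic compactness.} This is the main obstacle, since $\ell_{2}$ is infinite-dimensional. I would use the tail-estimate method of \cite{BLW_2001}. Fix a smooth cutoff $\theta:\mathbb R_{+}\to[0,1]$ with $\theta(s)=0$ for $s\le1$, $\theta(s)=1$ for $s\ge2$ and $|\theta'|$ bounded, and put $\theta_{k,i}:=\theta(|i|/k)$. Set
$$z(t):=\sum_{i}\theta_{k,i}|u_{i}(t)|^{2}.$$
Differentiating along the equation gives four contributions:
\begin{enumerate}
\item the discrete Laplacian term, which gives $-\nu\sum\theta_{k,i}|(D^{+}u)_{i}|^{2}$ plus a commutator term bounded by $\frac{c\nu}{k}\|u\|^{2}$, using $|\theta_{k,i+1}-\theta_{k,i}|\le c/k$;
\item the term $-2\lambda z$;
\item the nonlinear term $2\sum\theta_{k,i}F(u_{i})u_{i}\le-2\alpha z$ by (\textbf{C3});
\item the forcing term, bounded by $\lambda z+\lambda^{-1}\sum_{|i|\ge k}|g_{i}(t)|^{2}$.
\end{enumerate}
Since $H(\mathfrak{f})$ is compact, Lemma \ref{lAPF02} and Theorem 5.25 of \cite{LS_1974} (as used in Lemma \ref{lF3}) give uniform tails: $\sup_{t,\mathfrak{g}}\sum_{|i|\ge k}|g_{i}(t)|^{2}\to0$ as $k\to\infty$. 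Combining the four contributions with the absorbing bound $\|u\|\le R_{0}$ yields
$$z'\le-\lambda z+\varepsilon_{k},\qquad \varepsilon_{k}\to0.$$
Gronwall then shows that for every $\varepsilon>0$ there exist $T(\varepsilon,B)$ and $k(\varepsilon)$ with
$$\sum_{|i|\ge 2k}|\varphi_{i}(t,v,\mathfrak{g})|^{2}<\varepsilon\quad\text{for all } t\ge T,\ v\in B,\ \mathfrak{g}\in H(\mathfrak{f}).$$

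Finally, take sequences $t_{n}\to\infty$, $v_{n}\in B$ and $\mathfrak{g}_{n}\in H(\mathfrak{f})$. The points $\varphi(t_{n},v_{n},\mathfrak{g}_{n})$ are bounded, so their finitely many low modes $|i|<2k$ have convergent subsequences, and their tails are uniformly small. A diagonal argument then gives a Cauchy, hence convergent, subsequence in $\ell_{2}$. This proves asymptotic compactness, and Theorem \ref{thCGA1} completes the proof. The delicate part is the commutator estimate for $\langle A u,\theta_{k}u\rangle$, which must be made uniform in $\mathfrak{g}$.
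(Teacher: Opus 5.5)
Your proposal is correct. It is essentially the argument the paper relies on: here the paper only cites \cite{CS_2026}, but it states Theorem \ref{thCGA1} (dissipative plus asymptotically compact over a compact base) for exactly this use, and your energy estimate and cutoff tail estimate in the style of \cite{BLW_2001} are the same computations the paper carries out in Lemmas \ref{lBC_1}, \ref{lBC_01} and \ref{l2}. One point should be stated explicitly rather than assumed: your tail bound is uniform in $\mathfrak{g}\in H(\mathfrak{f})$, and $H(\mathfrak{f})$ is compact, so you get asymptotic compactness of the skew-product system, which is the form Theorem \ref{thCGA1} requires.
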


\section{Uniform Dissipativity of the Finite Dimensional
Approximation Systems}\label{Sec6}

It follows from standard existence and uniqueness theorems for
finite dimensional ODEs the Cauchy problem (\ref{eq2})-(\ref{eq3})
has a unique global solution $\varphi_{n}(t,u,g^{n})$. Then the
mapping $\varphi_{n}:\mathbb{R}_{+}\times \mathbb R^{2n+1}\times
H(\mathfrak{f}^{n})\to \mathbb R^{2n+1}$ is well defined and
satisfies the following conditions (see, e.g.
\cite{Bro79,Che_2015,Sel_1971}):
\begin{enumerate}
\item  $\varphi_{n}(0,v,\widetilde{\mathfrak{g}})=v$ for all $v\in
\mathbb R^{2n+1}$ and $\widetilde{\mathfrak{g}}\in
H(\mathfrak{f}^{n})$; \item
$\varphi_{n}(t,\varphi_{n}(\tau,v,\widetilde{\mathfrak{g}}),
\widetilde{\mathfrak{g}}^{\tau})=\varphi_{n}(t+\tau,v,\widetilde{\mathfrak{g}})$
for all $ v\in \mathbb R^{2n+1}$, $\widetilde{\mathfrak{g}}\in
H(\mathfrak{f}^{n})$ and $t,\tau \in \mathbb{R}_{+}$; \item  the
mapping $\varphi_{n}:\mathbb{R}_{+}\times \mathbb R^{2n+1}\times
H(g^{n})\to \mathbb R^{2n+1}$ is continuous.
\end{enumerate}

Denote by
$H(\mathfrak{f}^{n}):=\overline{\{\sigma(h,\mathfrak{f}^{n})|\
h\in \mathbb R\}}$ and $(H(\mathfrak{f}^{n}),\mathbb{R},\sigma)$
the shift dynamical system on $H(\mathfrak{f}^{n})$ induced from
$(C(\mathbb R\times\mathbb R^{2n+1},\mathbb R^{2n+1}),\mathbb
R,\sigma)$, i.e.,
$\sigma(\tau,\widetilde{\mathfrak{g}})=\widetilde{\mathfrak{g}}^\tau$
for all $\tau\in\mathbb R$ and $\widetilde{\mathfrak{g}}\in
H(\mathfrak{f}^{n})$.

Thus the equation (\ref{eq2}) generates a nonautonomous (cocycle)
semi-dynamical system  on $\langle \mathbb R^{2n+1},\varphi_{n},
(H(\mathfrak{f}^{n}),\mathbb R,\sigma)\rangle$ (or shortly
$\varphi_{n}$).

We have a sequence \(\{\varphi_{n}\}\) of the cocycles with the
phase space $\mathbb R^{2n+1}$ of \(\varphi_{n}\).

\begin{lemma}\label{lBC_01} Assume that the conditions (\textbf{C1})--(\textbf{C3}) holds,
the function $g\in C(\mathbb R,\ell_{2})$ is Lagrange stable and
for every $n\in \mathbb N$,
$\mathfrak{f}^{n}:=P_{n}(\mathfrak{f})$ and
$\widetilde{\mathfrak{g}} \in H(\mathfrak{f}^{n})$. Then, for all
$T > 0$ and solution $v(t)$ of the problem (\ref{eq2})-(\ref{eq3})
and $v(0)=v_0 \in \mathbb R^{2n+1}$ satisfies
$$
\|v(t)\| \le M \;,\quad \mbox{for all} \ 0 \le t \le T\,,
$$
where $M$ is a constant depending only on the data $(\lambda, C,
\|v_0\|)$ and $T$, where $C:=\sup\{\|\mathfrak{g}(t)\|\ | \ t\in
\mathbb R\}$.
\end{lemma}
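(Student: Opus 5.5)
The plan is to repeat the energy estimate of Lemma \ref{lBC_1} in the finite-dimensional setting of the system (\ref{eq2})--(\ref{eq3}), with $\widetilde{\mathfrak{g}}\in H(\mathfrak{f}^{n})$ in place of $\mathfrak{f}^{n}$. First I fix the constant. By Lemma \ref{lF1}, item 4.1, we have $H(\mathfrak{f}^{n})=P_{n}(H(\mathfrak{f}))$. Moreover $\|P_{n}(\mathfrak{g})(t)\|\le \|\mathfrak{g}(t)\|$ for every $\mathfrak{g}\in H(\mathfrak{f})$; if one uses the boundary-modified map $H_{n}$ of Lemma \ref{lF2} instead, the bound is at most $\sqrt 2\,\|\mathfrak{g}(t)\|$. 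Alternatively one can invoke Lemma \ref{lF4} directly. Either way
\[
\sup\{\|\widetilde{\mathfrak{g}}(t)\| : t\in\mathbb R,\ \widetilde{\mathfrak{g}}\in H(\mathfrak{f}^{n}),\ n\in\mathbb N\}\le C,
\]
where $C$ is independent of $n$ (up to the harmless factor $\sqrt2$). This uniformity in $n$ is what matters later for the uniform dissipativity.

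Next, let $v(t)$ be the solution on its maximal interval $[0,h)$ and set $y(t):=\|v(t)\|^{2}$ in $\mathbb R^{2n+1}$. I differentiate along (\ref{eq2}):
\[
y'(t)=-2\nu\langle A_{n}v,v\rangle-2\lambda y(t)+2\langle f^{n}(v),v\rangle+2\langle \widetilde{\mathfrak{g}}(t),v\rangle .
\]
The key structural fact is $A_{n}=B_{n}^{*}B_{n}$, which gives $\langle A_{n}v,v\rangle=\|B_{n}v\|^{2}\ge 0$, so the diffusion term can be dropped. For the nonlinear term, Condition (\textbf{C3}) applied componentwise gives $\sum_{|i|\le n}v_iF(v_i)\le-\alpha y$. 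One has to be careful with the sign convention of $F$ in (\ref{eq1}), where $F$ appears with a minus sign. I will read the nonlinearity as the dissipative term $+F(v_i)$, as in (\ref{eqI1}); under the weaker hypothesis $sF(s)\le 0$ of Remark \ref{remDC1} the term is simply discarded. The forcing term is handled by Young's inequality, $2\langle\widetilde{\mathfrak{g}},v\rangle\le\lambda y+C^{2}/\lambda$. Altogether
\[
y'(t)\le-(\lambda+2\alpha)y(t)+C^{2}/\lambda .
\]

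Gronwall's lemma then gives
\[
y(t)\le e^{-(\lambda+2\alpha)t}\|v_0\|^{2}+\frac{C^{2}}{\lambda(\lambda+2\alpha)} ,
\]
and hence $\|v(t)\|\le M:=\bigl(\|v_0\|^{2}+C^{2}/(\lambda(\lambda+2\alpha))\bigr)^{1/2}$. This $M$ depends only on $\lambda,\alpha,C,\|v_0\|$, and it is even uniform in $T$ and $n$. Boundedness rules out finite-time blow-up, so $h=+\infty$ and the estimate holds on every $[0,T]$.

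The only genuinely delicate point is the first step: obtaining $C$ independently of $n$ and of $\widetilde{\mathfrak{g}}$. This rests on the identification $H(\mathfrak{f}^{n})=P_{n}(H(\mathfrak{f}))$, which uses the Lagrange stability of $\mathfrak{f}$, together with the contraction property of $P_{n}$. The remaining steps are routine.
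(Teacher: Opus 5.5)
Your proof is correct and takes the same route as the paper. Both arguments use the energy identity for $y=\|v\|^{2}$, drop the diffusion term via $A_{n}=B_{n}^{*}B_{n}\ge 0$, and apply Condition (\textbf{C3}), Young's inequality and Gronwall's lemma. In both, the forcing is bounded uniformly in $n$ because truncation does not increase the $\ell_{2}$-norm, which is the paper's estimate $\|\mathfrak{f}^{n}(t)\|^{2}\le\sum_{i\in\mathbb Z}|f_{i}(t)|^{2}\le C^{2}$. Your Gronwall bound $y(t)\le e^{-(\lambda+2\alpha)t}\|v_0\|^{2}+C^{2}/(\lambda(\lambda+2\alpha))$ is the correctly stated form of the paper's bound, and your reading of the sign of $F$ as in (\ref{eqI1}) is the one the paper's computation implicitly uses.
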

\begin{proof} Let $v(t)$ be a solution of the equation
(\ref{eq2}) with the initial condition $v(0)=v_0$ defined on the
maximal interval $[0,h)$. Denote by $y(t):=\|v(t)\|^{2}$ then we
have
\begin{eqnarray}\label{eqDE1}
& y'(t)=2(v'(t),v(t))=2(\nu A_{n}
v(t),v(t))+2(f^{n}(v(t)),v(t))+2(\mathfrak{f}^{n}(t),v(t))= \nonumber \\
& -2\nu \|B_{n}^{+}v(t)\|^{2}-2\lambda
\|v(t)\|^{2}+2(\widetilde{f}^{n}(v(t)),v(t))+2(\mathfrak{f}^{n}(t),v(t))
\nonumber
\end{eqnarray}
for every $t\in [0,h)$.

Note that
\begin{equation}\label{eqDE1.1}
\|\mathfrak{f}^{n}(t)\|^{2}=\sum\limits_{|i|\le
n}|\mathfrak{f}_{i}(t)|^{2}\le \sum\limits_{i\in \mathbb
Z}|\mathfrak{f}_{i}(t)|^{2}\le
C^{2}:=\sup\{\|\mathfrak{g}(t)\|^{2}|\ (t,\mathfrak{g})\in \mathbb
R\times\mathbb F\}
\end{equation}
for every $n\in \mathbb N$.

Since
\[
|(\mathfrak{f}^{n}(t),v(t))| \le \|\mathfrak{f}^{n}(t)\|\|v(t)\|
\le \frac{1}{2}\lambda \|v(t)\|^2 +
\frac{1}{2\lambda}\|\mathfrak{f}^{n}(t)\|^2\,,
\]
using (\textbf{C3}) from (\ref{eqDE1.1}) we get
\begin{equation}\label{eqDE2}
y'(t)=2(v'(t),v(t)) \le -(\lambda +2\alpha)y(t)+
\frac{C^{2}}{\lambda}
\end{equation}
for every $t\in [0,h)$. By Gronwall's lemma from the inequality
(\ref{eqDE2}), taking into account that $y(0)=|v(0)|^{2}$, we
obtain
\begin{equation}\label{eqDE3}
y(t)\le e^{-(\lambda
+2\alpha)t}\big{(}(\|v(0)\|^{2}-\frac{C^{2}}{\lambda (\lambda
+2\alpha)})+\frac{C^{2}}{\lambda (\lambda
+2\alpha)}\big{)}\nonumber
\end{equation}
and, consequently,
\begin{equation}\label{eqDE_04}
\|v(t)\|\le M \nonumber
\end{equation}
for all $0\le t\le T<h$, where
\begin{equation}\label{eqDE_05}
M=M(T,\|v_0\|,C):=\Big{(}e^{-(\lambda
+2\alpha)T}\big{(}(\|v_0\|^{2}-\frac{C^{2}}{\lambda (\lambda
+2\alpha)})+\frac{C^{2}}{\lambda (\lambda
+2\alpha)}\big{)}\Big{)}^{1/2}.\nonumber
\end{equation}
Lemma is proved.
\end{proof}

\begin{remark}\label{remDC_1} Lemma \ref{lBC_01} remains true if we
replace the Condition (\textbf{C3}) by the weaker condition:
$F(s)s\le 0$ for all $s\in \mathbb R$.
\end{remark}

Denote by $\mathcal{O}_n := \{ v \in \mathbb{R}^{2n+1} \mid
\|v\|_{\mathbb{R}^{2n+1}} \le M \}$.

By Lemma \ref{lBC_01}, $\mathcal{O}_n$ is a bounded absorbing set
for the cocycle $\langle \mathbb{R}^{2n+1}, \varphi_n,
(H(\mathfrak{f}^{n}), \mathbb{R}, \sigma) \rangle$ (or shortly
$\varphi_{n}$ ) in the space $\mathbb{R}^{2n+1}$.

According to Theorem 5 \cite{Che_2022} (see also \cite[Ch.II,
Theorem 2.8.6]{Che_2024}), the cocycle $\varphi_n$ has a compact
global attractor
$I^{n}:=\{I^n_{\widetilde{\mathfrak{g}}}|\widetilde{\mathfrak{g}}\in
H(\mathfrak{f}^{n})\}\) and $I^n_{\widetilde{\mathfrak{g}}}\subset
\mathcal{O}_n$ for all $\widetilde{\mathfrak{g}} \in
H(\mathfrak{f}^{n})$.

\begin{lemma}\label{l2} Assume that
$$
F(s)s \le 0 \quad \forall s \in \mathbb{R}
$$
and $\mathfrak{f} \in C(\mathbb{R}, \ell_2)$ is Lagrange stable.
Then for each $\varepsilon > 0$, there exists $k(\varepsilon) > 0$
depending on the data $(\nu, \lambda, \widetilde{g})$ and
$\varepsilon$ but not on $n$ such that $\forall w \in I^n :=
\bigcup\{I^n_{\widetilde{\mathfrak{g}}}|\widetilde{\mathfrak{g}}\in
H(\mathfrak{f}^{n})\}$
$$
\sum_{k(\varepsilon) \le |i| \le n} |w_i|^2 \le \varepsilon
$$
\end{lemma}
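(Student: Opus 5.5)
This is the standard tail-estimate argument of Bates--Lu--Wang, adapted to the periodic finite system (\ref{eq2}). Fix a smooth cut-off $\theta:\mathbb R_{+}\to[0,1]$ with $\theta(s)=0$ for $0\le s\le 1$, $\theta(s)=1$ for $s\ge 2$, and $|\theta'|\le 2$. For $k\in\mathbb N$ and a solution $v(t)=\varphi_{n}(t,v_0,\widetilde{\mathfrak g})$ of (\ref{eq2}) set
$$
y_k(t):=\sum_{|i|\le n}\theta\Big(\frac{|i|}{k}\Big)|v_i(t)|^{2}.
$$
We only need $k<n$; if $n\le k$ the sum in the statement is empty. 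We take the weighted inner product of (\ref{eq2}) with $(\theta(|i|/k)v_i)_{|i|\le n}$. The reaction term satisfies $\theta F(v_i)v_i\le 0$ by hypothesis, and the linear term gives $-2\lambda y_k$. The forcing term is bounded by $\lambda y_k+\lambda^{-1}\sum_{|i|\ge k}|\mathfrak f^n_i(t)|^2$, so
\begin{equation*}
y_k'(t)\le -\lambda y_k(t)+\nu\,\mathcal D_k(t)+\frac1\lambda\sum_{|i|\ge k,\,|i|\le n}|\mathfrak f^{n}_i(t)|^{2},
\end{equation*}
where $\mathcal D_k$ is the diffusion term.

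The diffusion term is handled by summation by parts with $B_n$ on the cycle $\mathbb Z_{2n+1}$. We get
$$
-\langle A_n v,\theta v\rangle=-\sum_i \theta_i|(B_nv)_i|^2-\sum_i(\theta_{i+1}-\theta_i)(B_nv)_i v_{i+1}.
$$
The first piece is $\le 0$. For the second, $|\theta_{i+1}-\theta_i|\le 2/k$ for every $i$, including the wrap-around edge between $\pm n$, since $\theta(n/k)=1$ at both ends. Hence that piece is bounded by $(8/k)\|v\|^2\le 8M_0^2/k$, where $M_0$ is a uniform bound for $\|v(t)\|$ for large $t$. By the argument of Lemma \ref{lBC_01}, the uniform bound $\|\mathfrak f^n(t)\|\le C$ from Lemma \ref{lF4} / (\ref{eqDE1.1}), and $\lambda>0$, the ball $\mathcal O_n$ of radius $M_0^2=2C^{2}/\lambda^{2}$ absorbs bounded sets uniformly in $n$ and $\widetilde{\mathfrak g}$. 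In particular, every $I^n$ lies in a ball of radius $M_0$ independent of $n$.

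The forcing tail is the delicate step, because the boundary entries of $\mathfrak f^n$ are the shifted values $f_{\mp(n+1)}$. Still, every entry of $\widetilde{\mathfrak g}\in H(\mathfrak f^n)=P_n(H(\mathfrak f))$ with index $|i|\ge k$ is a coordinate $g_j$ with $|j|\ge k$ of some $\mathfrak g\in H(\mathfrak f)$. By Lemma \ref{lAPF02} and Theorem 5.25 of \cite{LS_1974}, exactly as in (\ref{eqC14}), the set $\bigcup_{\mathfrak g\in H(\mathfrak f)}\mathfrak g(\mathbb R)$ is precompact. So for each $\varepsilon$ there is $k_1(\varepsilon)$ with $\sup_{t,\mathfrak g}\sum_{|j|\ge k_1}|g_j(t)|^2<\lambda^2\varepsilon/4$. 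The boundary terms contribute at most $2\sup_{|j|\ge k}|g_j|^2$, which is also small.

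Combining, for $k\ge\max(k_1,64\nu M_0^2/(\lambda\varepsilon))$ we get
$$
y_k'\le-\lambda y_k+\tfrac{\lambda\varepsilon}{2}.
$$
Gronwall then gives $y_k(t)\le e^{-\lambda (t-t_0)}y_k(t_0)+\varepsilon/2$, with all constants independent of $n$. Finally, take $w\in I^n_{\widetilde{\mathfrak g}}$. By invariance of the attractor, $w=\varphi_n(t,u,\sigma(-t,\widetilde{\mathfrak g}))$ for some $u\in I^n_{\sigma(-t,\widetilde{\mathfrak g})}\subset\{\|u\|\le M_0\}$, and this holds for every $t>0$. Choosing $t$ with $e^{-\lambda t}M_0^2\le\varepsilon/2$ yields $\sum_{2k\le|i|\le n}|w_i|^2\le y_k\le\varepsilon$. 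We then set $k(\varepsilon):=2k$.

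The main obstacle is verifying that the periodic coupling ($v_{-n}$ linked to $v_n$) and the reshuffled boundary forcing do not spoil the estimate uniformly in $n$. That is why the cut-off must equal $1$ at both ends, so that the wrap-around difference $\theta_{n}-\theta_{-n}$ vanishes.
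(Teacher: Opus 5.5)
Your proposal is correct and follows essentially the same route as the paper. Both use the Bates--Lu--Wang cut-off $\xi_k(|i|)$, a summation-by-parts bound of order $\|v\|^2/k$ for the diffusion term, and Young's inequality together with compactness of the hull to make the forcing tails uniformly small. Both then apply Gronwall and use invariance of the attractor to pass from trajectories to points of $I^n$; the paper does this last step in Corollary \ref{cor2}.

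Your version is somewhat tidier in three respects. You take the dissipation from $\lambda>0$ rather than from Condition (\textbf{C3}), which the lemma does not assume. You give an absorbing radius that is explicitly independent of $n$. You also check the wrap-around edge, although there $\theta_n-\theta_{-n}=0$ simply because the weight depends only on $|i|$, not because $\theta(n/k)=1$.
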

\begin{proof} Note that the space $\mathbb R^{2n+1}$ is embedded
in the space $\ell_{2}$ as follow:
$$
u\to \mathcal{I}_{n}(u):=(\ldots,0,u_{-n},u_{-n+1},\ldots,
u_{-1},u_{0},u_{1}.\dots,u_{n-1},u_{n},0,\ldots).
$$
It is evident that this embedding (the mapping $\mathcal{I}_{n}$)
is isometric and completely continuous. We will prove Lemma
\ref{l2} using the slight modification of the ideas and methods
elaborated in the work \cite{BLW_2001} (see also
\cite[Ch.2.3]{HK_2023}). Consider a smooth function \( \xi :
\mathbb{R}^+ \to [0, 1] \) satisfying
\[
\xi(s) =
\begin{cases}
0, & 0 \le s \le 1, \\
\in [0, 1], & 1 \le s \le 2, \\
1, & s \ge 2
\end{cases}
\]
and note that there exists a constant $C_0$ such that $|\xi'(s)|
\le C_0$ for all \( s \ge 0 \). Then for a fixed \( k \in
\mathbb{N} \) (its value will be specified later), define
\[
\xi_k(s) = \xi \left( \frac{s}{k} \right) \quad \text{for all}
\quad s \in \mathbb{R_+}.
\]

Given \( u \in C^{1}(\mathbb R_{+},\mathbb  R^{2n+1}) \), define
\( v \in C^{1}(\mathbb R_{+}, \mathbb R^{2n+1}) \) componentwise
as
\[
v_i (t):= \xi_k( |i| ) u_i(t) \quad \text{for} \quad i \in
\mathbb{Z}\ \ (\forall \ t\in \mathbb R_{+}).
\]

Note that
\begin{equation}\label{eqNT1}
\langle u(t),v(t)\rangle =\sum\limits_{i\in \mathbb
Z}\xi_{k}(|i|)|u_{i}(t)|^{2} \nonumber
\end{equation}
and
\begin{equation}\label{eqNT2}
\frac{d\langle u(t),v(t)\rangle}{dt}=2\langle
\frac{du(t)}{dt},v(t)\rangle \nonumber
\end{equation}
for every $t\in \mathbb R_{+}$.

Taking the inner product of the equation (\ref{eq2}) with \(
\mathbf{v}(t) \) gives
\[
\frac{d}{dt} \langle u(t),v(t) \rangle + \nu \langle B_{n}u(t),
B_{n}v(t) \rangle = \langle \Phi_{n}u(t)), v(t) \rangle + \langle
\mathfrak{f}^{n}(t), v(t) \rangle,
\]

that is
\begin{equation}\label{20}
\frac{d}{dt} \sum_{|i| \le n} \xi_k( |i| ) |u_i|^2 + 2 \nu \langle
B_{n} u, B_{n} v \rangle = 2 \sum_{|i| \le n} \xi_k( |i| ) u_i
f(u_i) +
\end{equation}
$$
2 \sum_{|i| \le n} \xi_k( |i| ) \mathfrak{f}^{n}_{i}(t)u_i .
$$

Each term in the equation (\ref{20}) will now be estimated. First,
\[
\left\langle B_{n}u,  B_{n}v \right\rangle = \sum_{|i| \le n}
(u_{i+1} - u_i)(v_{i+1} - v_i)
\]
\[
= \sum_{|i| \le n} (u_{i+1} - u_i) \left[ \left( \xi_k(|i+1|) -
\xi_k(|i|) \right) u_{i+1} + \xi_k(|i|)(u_{i+1} - u_i) \right]
\]
\[
= \sum_{|i| \le n} \left( \xi_k(|i+1|) - \xi_k(|i|) \right)
(u_{i+1} - u_i) u_{i+1} + \sum_{|i| \le n} \xi_k(|i|) (u_{i+1} -
u_i)^2
\]
\[
\ge \sum_{|i| \le n} \left( \xi_k(|i+1|) - \xi_k(|i|) \right)
(u_{i+1} - u_i) u_{i+1}.
\]
Since
\[
\left| \sum_{|i| \le n} \left( \xi_k(|i+1|) - \xi_k(|i|) \right)
(u_{i+1} - u_i) u_{i+1} \right| \le \sum_{|i| \le n} \frac{1}{k}
|\xi'(s_i)| \cdot |u_{i+1} - u_i| \cdot |u_{i+1}|,
\]
for some \( s_i \) between \( |i| \) and \( |i+1| \), and
\[
\sum_{|i| \le n} |\xi'(s_i)| \, |u_{i+1} - u_i| \, |u_{i+1}| \le
C_0 \sum_{|i| \le n} \left( |u_{i+1}|^2 + |u_i||u_{i+1}| \right)
\le 4 C_0 \left\| \mathbf{u} \right\|^2.
\]

Then it follows that for all \( \mathbf{u} \in Q \) and \(
\mathbf{v} \in \ell^2 \) defined componentwise as \( v_i :=
\xi_k(|i|) u_i \), for \( |i| \le N \),
\begin{equation}\label{In20}
\left\langle B_{n}u, B_{n}v \right\rangle \ge - \frac{4 C_0
\|Q\|^2}{k}.
\end{equation}
where \( \|Q\| := \sup_{\mathbf{u} \in Q} \left\| \mathbf{u}
\right\| \). On the other hand, by Condition (\textbf{C3}),
\begin{equation}\label{eqUD1}
2 \sum_{|i| \le n} \xi_k(|i|) u_i f(u_i) \le -2 \alpha \sum_{|i|
\le n} \xi_k(|i|) |u_i|^2
\end{equation}

and by Young's inequality
\begin{equation}\label{eqUD2}
2 \sum_{|i| \le n} \xi_k(|i|) g_i u_i \le \alpha \sum_{|i| \le n}
\xi_k(|i|) |u_i|^2 + \frac{1}{\alpha} \sum_{|i| \le n} \xi_k(|i|)
|\mathfrak{f}^{n}_i(t)|^2.
\end{equation}

From (\ref{eqUD1}) and (\ref{eqUD2}) we obtain
\begin{equation}\label{22}
2 \sum_{|i| \le n} \xi_k(|i|) u_i f(u_i) + 2 \sum_{|i| \le n}
\xi_k(|i|) \mathfrak{f}^{n}_i(t) u_i \le -\alpha \sum_{|i| \le n}
\xi_k(|i|) |u_i|^2 +
\end{equation}
$$
\frac{1}{\alpha} \sum_{n \ge |i| \ge k} |\mathfrak{f}^{n}_i(t)|^2
.
$$

Using the estimates (\ref{In20}) and (\ref{22}) in the equation
(\ref{20}) gives
\begin{equation}\label{EqCDN1}
\frac{d}{dt} \sum_{|i| \le n} \xi_k(|i|) |u_i|^2 + \alpha
\sum_{|i| \le n} \xi_k(|i|) |u_i|^2 \le \nu \frac{4 C_0 \| Q
\|^2}{k} + \frac{1}{\alpha} \sum_{n \ge |i| \ge k}
|\mathfrak{f}^{n}_i(t)|^2
\end{equation}

Since $H(K)$ ($K=\{\mathfrak{f}^{n}\}_{n\in \mathbb Z_{+}}$) is a
compact subset in the space $C(\mathbb R,\ell_{2})$ then by Lemma
\ref{lAPF02} there exists a compact subset $Q$ from $\ell_{2}$
such that $\mathfrak{g}(\mathbb R)\subset Q$ for all
$\mathfrak{g}\in H(K)$. In particular for every $\varepsilon >0$
there exists a natural number $k(\varepsilon) \le n$ such that
\begin{equation}\label{eqDC2}
\sum\limits_{n\ge |i|\ge k(\varepsilon)}|v_i|^{2}<\varepsilon
\end{equation}
for all $v\in \overline{\mathfrak{f}^{n}(\mathbb R)}$. Note that
$H(\mathfrak{f}^{n})\subseteq H(K)$ and, consequently,
\begin{equation}\label{eqDC3}
\mathfrak{f}^{n}(t)\in \overline{\mathfrak{f}^{n}(\mathbb
R)}\subseteq Q.
\end{equation}
From (\ref{eqDC2}) and (\ref{eqDC3}) we obtain
\begin{equation}\label{eqDC4}
\sum\limits_{n \ge |i|\ge
k(\varepsilon)}|\mathfrak{f}^{n}_i(t)|^{2}<\varepsilon \nonumber
\end{equation}
for all $n\in \mathbb Z_{+}$ and $t\in \mathbb R$.

Note that $\mathfrak{f}^{n}(t)\in \overline{\mathfrak{g}(\mathbb
R)}\subseteq \overline{\mathfrak{f}^{n}(\mathbb R)}$ for all $t\in
\mathbb R$ and, consequently, for every \( \varepsilon
> 0 \), there exists $k_{\varepsilon}\le n$ such that
\[
\nu \frac{4C_0 \|Q\|^2}{k} + \frac{1}{\alpha} \sum_{n \ge|i| \ge
k} |\mathfrak{f}^{n}_{i}(t)|^2 \le \varepsilon, \quad n \ge k \ge
k(\varepsilon)
\]
for every $n\in \mathbb Z_{+}$ and $t\in \mathbb R$.

The inequality (\ref{EqCDN1}) along with the relation above give
\[
\frac{d}{dt} \sum_{|i| \le n} \xi_k(|i|) |u_i|^2 + \alpha
\sum_{|i| \le n} \xi_k(|i|) |u_i|^2 \le \varepsilon
\]
for all $n\in \mathbb Z_{+}$ and $t\in \mathbb R$.

Then, Gronwall's lemma implies that
\[
\sum_{|i| \le n} \xi_k(|i|)
|\varphi_{n}(t,u_0,\mathfrak{f}^{n})_i|^2 \le e^{-\alpha t}
\sum_{|i| \le N} \xi_k(|i|) |v_{i}|^2 + \frac{\varepsilon}{\alpha}
\le e^{-\alpha t} \|v\|^2 + \frac{\varepsilon}{\alpha}.
\]

Hence for every \( v \in Q \),
\[
\sum_{|i| \le n} \xi_k(|i|)
|\varphi(t,u_0,\mathfrak{f}^{n})_{i}|^2 \le e^{-\alpha t} \| Q
\|^2 + \frac{\varepsilon}{\alpha}.
\]
and therefore
\begin{equation}\label{eqT1}
\sum_{|i| \le n} \xi_k(|i|) \left| \varphi(t,
u_{0},\mathfrak{f}^{n})_{i} \right|^2 \le
\frac{2\varepsilon}{\alpha}, \quad \text{for } t \ge
T(\varepsilon) := \frac{1}{\alpha} \ln \frac{\alpha \| Q
\|^2}{\varepsilon}.
\end{equation}
This means that the cocycle $\langle
\ell_{2},\varphi_{n},(H(\mathfrak{f}^{n}),\mathbb
R,\sigma)\rangle$ is asymptotic tails on the absorbing set $Q$.
Lemma is proved.
\end{proof}

\begin{coro}\label{cor2} Under the conditions of Lemma \ref{l2}
the set
\begin{equation}\label{eqCY1}
\bigcup_{n\in \mathbb Z_{+}}\{I^{n}_{\widetilde{\mathfrak{g}}}|\
\widetilde{\mathfrak{g}}\in H(\mathfrak{f}^{n})\}
\end{equation}
is precompact in the space $\ell_{2}$.
\end{coro}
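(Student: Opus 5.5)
The plan is to verify the classical criterion for precompactness in $\ell_{2}$ (Theorem 5.25 of \cite[Ch.V]{LS_1974}, already used in the proofs of Lemma \ref{lF_3} and Lemma \ref{lF3}). A set $S\subset\ell_{2}$ is precompact if and only if two conditions hold. First, $S$ is bounded. Second, its tails are uniformly small: for every $\varepsilon>0$ there is $k$ with $\sum_{|i|\ge k}|w_i|^{2}<\varepsilon$ for all $w\in S$. Throughout, each $I^{n}_{\widetilde{\mathfrak{g}}}\subset\mathbb R^{2n+1}$ is identified with its image $\mathcal{I}_{n}(I^{n}_{\widetilde{\mathfrak{g}}})\subset\ell_{2}$ via the isometric embedding of Remark \ref{remI1}. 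Denote by $S$ the union (\ref{eqCY1}).

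\emph{Step 1 (uniform boundedness).} By Lemma \ref{lBC_01}, every $\varphi_{n}$ has the absorbing ball $\mathcal{O}_n$ of radius $M$, and $I^{n}_{\widetilde{\mathfrak{g}}}\subset\mathcal{O}_n$. The key point is that the radius does not depend on $n$. Letting $T\to\infty$ in the Gronwall estimate (\ref{eqDE2}), one may take $M^{2}=2C^{2}/(\lambda(\lambda+2\alpha))$, where $C$ is the constant of Lemma \ref{lF4}. That constant bounds $\|\mathfrak{g}(t)\|$ uniformly over $\mathfrak{g}\in H(K)\supseteq H(\mathfrak{f}^{n})$ for all $n$, by (\ref{eqDC3}) and Lemma \ref{lF3}. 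Since $\mathcal{I}_{n}$ is an isometry, $S$ is contained in the ball of radius $M$ in $\ell_{2}$.

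\emph{Step 2 (uniform tails).} Fix $\varepsilon>0$ and take $k(\varepsilon)$ from Lemma \ref{l2}, which is independent of $n$. Let $w\in I^{n}_{\widetilde{\mathfrak{g}}}$. If $n<k(\varepsilon)$, then $\mathcal{I}_{n}(w)$ has no nonzero coordinates with $|i|\ge k(\varepsilon)$, so its tail sum vanishes. If $n\ge k(\varepsilon)$, then
\[
\sum_{|i|\ge k(\varepsilon)}|\mathcal{I}_{n}(w)_i|^{2}=\sum_{k(\varepsilon)\le|i|\le n}|w_i|^{2}\le\varepsilon
\]
by Lemma \ref{l2}. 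Thus the tails of $S$ are uniformly small, and together with Step 1 the criterion gives precompactness of $S$.

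\emph{Main obstacle.} The delicate point is that Lemma \ref{l2} must really hold for every point of the attractor $I^{n}$, with $k(\varepsilon)$ independent of both $n$ and $\widetilde{\mathfrak{g}}$. The lemma's proof gives a tail estimate (\ref{eqT1}) for $t\ge T(\varepsilon)$ along trajectories starting in the uniform absorbing ball. To pass this to the attractor, I would use its invariance: $I^{n}_{\widetilde{\mathfrak{g}}}=\varphi_{n}(t,I^{n}_{\sigma(-t,\widetilde{\mathfrak{g}})},\sigma(-t,\widetilde{\mathfrak{g}}))$ for every $t\ge0$. Taking $t=T(\varepsilon)$ represents each $w\in I^{n}_{\widetilde{\mathfrak{g}}}$ as the time-$T(\varepsilon)$ image of a point of the ball $\mathcal{O}_n$. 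Because $\xi_k(|i|)=1$ for $|i|\ge 2k$, the weighted estimate then bounds $\sum_{2k\le|i|\le n}|w_i|^{2}$, and replacing $k$ by $2k$ is harmless. I also need the constants $\|Q\|$, $C_0$ and the forcing tails in (\ref{EqCDN1}) to be uniform in $n$. This holds because all the $\mathfrak{f}^{n}$ and their hulls lie in the compact set $H(K)$ of Lemma \ref{lF3}, and their values lie in a single compact $Q\subset\ell_{2}$ by Lemma \ref{lAPF02}.
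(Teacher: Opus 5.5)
Your proof is correct and follows essentially the paper's route. Like the paper, you apply the $\ell_{2}$ compactness criterion (Theorem 5.25 of Lyusternik--Sobolev), getting the uniform tail bound on each $I^{n}_{\widetilde{\mathfrak{g}}}$ by writing every point as the time-$T(\varepsilon)$ image of an attractor point (via invariance) and invoking (\ref{eqT1}). Your extra care goes beyond the paper's proof without changing the method: the $n$-independent absorbing radius supplies the boundedness half of the criterion, which the paper leaves implicit, and you also handle the $k$ versus $2k$ shift and the case $n<k(\varepsilon)$.
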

\begin{proof}
Let
$$
w\in I:=\bigcup\{I^{n}|\ n\in \mathbb Z_{+}\},
$$
where
$$
I^{n}:=\bigcup\{I^{n}_{\mathfrak{g}}|\ \mathfrak{g}\in
H(\mathfrak{f}^{n})\},
$$
then there exist $n\in \mathbb Z_{+}$ and $\mathfrak{g}\in
H(\mathfrak{f}^{n})$ such that $w\in I^{n}_{\mathfrak{g}}$. Since
$$
I^{n}_{\mathfrak{g}}=\varphi(\tau,I^{n}_{\mathfrak{g}},\sigma(-\tau,\mathfrak{g}))
$$
for every $\tau \in \mathbb R_{+}$ then there exists a point $v\in
I^{n}_{\mathfrak{g}}$ such that
$$
w=\varphi(T(\varepsilon),v,\sigma(-T(\varepsilon),\mathfrak{g})),
$$
where $T(\varepsilon)$ is the positive number figuring in
(\ref{eqT1}). Thus we obtain
\begin{equation}\label{eqT2}
\|v\|^{2}=\sum\limits_{k_{\varepsilon}\le |i|\le n}|w_{i}|^{2}=
\sum\limits_{k_{\varepsilon}\le |i|\le
n}|\varphi(T(\varepsilon),w,\sigma(-T(\varepsilon),\mathfrak{g}))|^{2}.
\end{equation}
From (\ref{eqT1}) and (\ref{eqT2}) one has
\begin{equation}\label{eqT3}
\sum\limits_{k_{\varepsilon}\le |i|\le n}|w_{i}|^{2}<\varepsilon .
\end{equation}
According to Theorem 5.25 \cite[Ch.V,p.167]{LS_1974} the set $I$
is precompact in $\ell_{2}$. The Corollary is proved.
\end{proof}

\section{Upper Semi-continuous Convergence of the
Finite-dimensional Attractors}\label{Sec7}

Below we give a Lemma which play an important role in the proof of
the upper semi-continuity of the global attractor.

\begin{lemma}\label{lUSC1} Assume that
\begin{equation}\label{eqFUSC1}
F(s)s\le 0
\end{equation}
for all $s\in \mathbb R$, $\mathfrak{f}\in C_{b}(\mathbb
R,\ell_{2})$ and $v_{0,n}\in I^{n}_{\mathfrak{f}^{n}}$. Then there
exists a subsequence $\{v_{0,n_{k}}\}$ of $\{v_{0,n}\}$ and $u\in
I_{\mathfrak{f}}$ such that $v_{0,n_{k}}$ converges to $u$ in
$\ell_{2}$.
\end{lemma}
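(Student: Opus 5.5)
The plan is to follow the scheme of \cite{BLW_2001} (see also \cite[Ch.2.3]{HK_2023}). We combine three ingredients: the uniform precompactness of the approximate attractors, invariance through complete bounded trajectories, and a diagonal/Arzel\`a--Ascoli argument that produces a full trajectory of the limit equation lying in a compact set. That trajectory must then lie in $I_{\mathfrak{f}}$.

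\emph{Step 1 (a convergent subsequence).} By Corollary \ref{cor2}, the union $\bigcup_{n}I^{n}$, viewed in $\ell_{2}$ through the isometric embedding $\mathcal{I}_{n}$, is precompact. Hence some subsequence $v_{0,n_k}$ converges in $\ell_{2}$ to a point $u$. It remains to show that $u\in I_{\mathfrak{f}}$.

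\emph{Step 2 (complete trajectories).} By invariance, $I^{n}_{\mathfrak{f}^{n}}=\varphi_{n}(\tau,I^{n}_{\sigma(-\tau,\mathfrak{f}^{n})},\sigma(-\tau,\mathfrak{f}^{n}))$ for every $\tau\ge0$. We therefore construct, for each $n$, a complete trajectory $\gamma_{n}:\mathbb R\to\mathbb R^{2n+1}$ with $\gamma_{n}(0)=v_{0,n}$ and $\gamma_{n}(t)\in I^{n}_{\sigma(t,\mathfrak{f}^{n})}$ for all $t\in\mathbb R$. The construction proceeds by choosing preimages at times $-1,-2,\dots$ and solving forward. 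All the values $\gamma_{n}(t)$ lie in the precompact set $Q^{*}:=\overline{\bigcup_{n}I^{n}}$.

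\emph{Step 3 (passage to the limit).} On each interval $[-L,L]$, the functions $\gamma_{n}$ are equicontinuous in $\ell_{2}$, uniformly in $n$. To see this, write the right-hand side of (\ref{eq2}) in $\ell_{2}$-form. Its norm is bounded by $4\nu\|Q^{*}\|+\lambda\|Q^{*}\|+\mathrm{Lip}_{B}(F)\|Q^{*}\|+C$, where $C$ is taken from Lemma \ref{lF4}. Indeed $\|A_{n}\|\le4$, and $F(0)=0$ together with Condition (\textbf{C2}) gives the Nemytskii bound. By Lemma \ref{lAPF02} and a diagonal process, we pass to a further subsequence along which $\gamma_{n_k}\to\gamma$ in $C(\mathbb R,\ell_{2})$, with $\gamma(0)=u$ and $\gamma(\mathbb R)\subset Q^{*}$.

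We then verify that $\gamma$ solves (\ref{eq2.3}) with $\mathfrak{f}$, using the integrated form
\begin{equation*}
\gamma_{n}(t)-\gamma_{n}(s)=\int_{s}^{t}\big(-\nu A_{n}\gamma_{n}+\Phi(\gamma_{n})+\mathfrak{f}^{n}\big)d\tau .
\end{equation*}
The nonlinear term converges by Lemma \ref{l2.2}, and $\mathfrak{f}^{n}\to\mathfrak{f}$ locally uniformly by Lemma \ref{lF3}. The key point is to compare $A_{n}\gamma_{n}$, embedded in $\ell_{2}$, with $-A\gamma_{n}$. They differ only in the components with indices $|i|\in\{n,n+1\}$, that is, the periodic coupling and the boundary terms. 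Each such difference is bounded by a constant times $\big(\sum_{|i|\ge n-1}|\gamma_{n,i}|^{2}\big)^{1/2}$, and this tends to zero uniformly by the uniform tail estimate of Lemma \ref{l2}. The same tail estimate also takes care of the discrepancy in the boundary components of $\mathfrak{f}^{n}$, which are shuffled indices $\pm(n+1)$ and tend to zero. This is the step I expect to be the main technical obstacle: it is where the periodic boundary conditions must be reconciled with the infinite lattice, and it relies entirely on the $n$-independent tail bound.

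\emph{Step 4 (conclusion).} The limit $\gamma$ is a complete trajectory of the cocycle $\varphi$ along $\sigma(t,\mathfrak{f})$, and it is relatively compact. Its closure together with its base orbit is a compact invariant set of the skew-product system on $\ell_{2}\times H(\mathfrak{f})$. By the maximality of the Levinson center (Theorem \ref{t1.2.4}, item 4, applied via Theorem \ref{thGA1} and Theorem \ref{thCGA2}), this set lies in $\bigcup_{\mathfrak g}I_{\mathfrak g}\times\{\mathfrak g\}$. In particular $u=\gamma(0)\in I_{\mathfrak{f}}$.
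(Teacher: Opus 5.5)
Your proposal is correct and follows essentially the same route as the paper. Both arguments take complete trajectories inside $I^{n}$, use a uniform derivative bound together with Corollary \ref{cor2} and Lemma \ref{lAPF02} to extract a limit in $C(\mathbb R,\ell_{2})$, and identify that limit as a complete trajectory of (\ref{eq2.3}) through $u$, hence in $I_{\mathfrak{f}}$. The paper differs in two places. First, it passes to the limit componentwise: for fixed $i$ and $n>|i|$, the $i$-th equation of (\ref{eq2}) already coincides with the lattice equation when tested against $\psi\in C_{0}^{\infty}(\mathbb R)$, which sidesteps the boundary-term comparison you flag as the main obstacle. Second, it concludes from boundedness of the complete solution, whereas you use maximality of compact invariant sets; your version is the cleaner one, given that the attractor is only defined to attract compact sets.
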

\begin{proof} We will prove this statement by slight modifications
of the reasoning from the work \cite{BLW_2001} (see the proof of
Lemma 4.3).

Let $\nu_{n}(t)=\varphi_{n}(t,v_{0,n},\mathfrak{f}^{n})$ be the
solution of (\ref{eq2}) with the initial condition (\ref{eq3}).
Since $v_{0,n}\in I^{n}_{\mathfrak{f}^{n}}$ then $\nu_{n}(t)\in
I^{n}_{\sigma(t,\mathfrak{f}^{n})}$ for every $t\in \mathbb R$.
Then it follows from (\ref{eq2}) that
\begin{equation}\label{eqUSC1}
\|\nu_{n}(t)\|\le M
\end{equation}
for all $t\in \mathbb R$ and $n\in \mathbb N$. By (\ref{eq2}) we
get
\begin{equation}\label{eqUSC2}
\|\nu_{n}'(t)\|\le \nu \|A_n \nu_{n}(t)\| =\lambda \|\nu_{n}(t)\|
\|f(\nu_{n}(t))\|+\|\mathfrak{f}(t)\|.
\end{equation}
Note that
\begin{equation}\label{eqUSC3}
\nu \|A_{n}\nu_{n}(t)\|\le 4\nu \|\nu_{n}(t)\|\le 4\nu M
\end{equation}
and
\begin{equation}\label{eqUSC4}
\|f(\nu_{n}(t))\|=\|f(\nu_{n}(t))-f(0)\|\le C\|\nu_{n}(t)\|\le CM
\end{equation}
for all $t\in \mathbb R$ and $n\in \mathbb N.$

From (\ref{eqUSC2})-(\ref{eqUSC4}) it follows that
\begin{equation}\label{eqUSC5}
\|\nu'_{n}(t)\|\le C
\end{equation}
and, consequently,
 \begin{equation}\label{eqUSC6}
 \|\nu_{n}(t)-\nu_{n}(s)\|\le C|t-s|
 \end{equation}
 for all $t\in \mathbb R$ and $n\in \mathbb N$.

We shell prove that the sequence $\{\nu_{n}\}_{n\in \mathbb N}$ is
precompact in $C(\mathbb R,\ell_{2})$. By Lemma \ref{lAPF02} to
prove that $\{\nu_{n}\}_{n\in \mathbb N}$ is precompact it is
sufficient to show that the following two conditions are
fulfilled:
\begin{enumerate}
\item there exists a compact subset $Q$ from $\ell_{2}$ such that
$\nu_{n}(\mathbb R)\subseteq Q$ for every $n\in \mathbb N$; \item
the functional sequence $\{\nu_{n}\}_{n\in \mathbb N}$ is
uniformly equi-continuous.
\end{enumerate}
To this end we note that from (\ref{eqUSC6})
 it follows that the sequence $\{\nu_{n}\}_{n\in \mathbb N}$ is
 uniformly equi-continuous.

 Finally, we note that by Corollary \ref{cor2} the set
 \begin{equation}\label{eqUSC7}
 Q:=\overline{\bigcup\limits_{n\in \mathbb N}I^{n}} \nonumber
 \end{equation}
is compact in $\ell_{2}$, where
$I^{n}:=\{I^{n}_{\widetilde{\mathfrak{g}}}|\
\widetilde{\mathfrak{g}}\in H(\mathfrak{f}^{n})\}$. Since
$\nu_{n}(t)\in I^{n}_{\sigma(t,\mathfrak{f}^{n})}\subseteq Q$ for
all $(n,t)\in \mathbb N \times \mathbb R$, then $\nu_{n}(\mathbb
R)\subseteq Q$ fr every $n\in \mathbb N$. By Lemma \ref{l2} the
functional sequence $\{\nu_{n}\}_{n\in \mathbb N}$ is precompact
in $C(\mathbb R,\ell_{2})$ and, consequently, there exists a
subsequence $\{\nu_{n_{k}}\}\subset C(\mathbb R,l_{2})$ and $u\in
C(\mathbb R,l_{2})$ such that
\begin{equation}\label{eqUSC7.1}
\nu_{n_{k}}\to u \ \ (\mbox{in the space}\ C(\mathbb R,l_{2}))
\end{equation}
as $k\to \infty$.


By (\ref{eqUSC1}) we get
\begin{equation}\label{eqLUSC1}
\|u(t)\|\le C
\end{equation}
for all $t\in \mathbb R$. We now prove $u(0)\in I_{\mathfrak{f}}$.
To this end, we show that $u$ is a solution to (\ref{eq2}) and
(\ref{eq3}). For simplicity, we denote by $\{\nu_{n}\}$ the
sequence $\{\nu_{n_{k}}\}$ in (\ref{eqUSC7.1}). It follows from
(\ref{eqUSC5}) that
\begin{equation}\label{eqLUSC2}
\dot{\nu}_{n}\to \dot{u}\ \mbox{in}\ L^{\infty}(\mathbb
R,\ell_{2})\ \mbox{weak star}.
\end{equation}
For fixe $i\in \mathbb Z$, let $n>|i|$. Since
$\nu_{n}(t)=\varphi_{n}(t,v_{0,n},\mathfrak f^{n})$ is the
solution to (\ref{eq2}) and (\ref{eq3}), we have
\begin{equation}\label{eqLUSC3}
\dot{\nu}_{n,i} =-\nu (-\nu_{n,i-1}+2\nu_{n,i} -\nu_{n.i+1}-
-\lambda \nu_{n,i}-F(\nu_{n,i})+\mathfrak{f}_{i}(t),\  t\in
\mathbb R .
\end{equation}
Then for each $\psi \in C_{0}^{\infty}(\mathbb R)$, we get
$$
 \int\limits_{\mathbb R}\dot{\nu}_{n,i}(t)\psi(t)dt=-\nu
\int\limits_{\mathbb
R}(-\nu_{n,i-1}(t)+2\nu_{n,i}(t)-\nu_{n,i+1}(t))\psi(t)dt-
$$
\begin{equation}\label{eqLUSC4}
\lambda\int\limits_{\mathbb
R}\nu_{n,i}(t)\psi(t)dt-\int\limits_{\mathbb
R}F(\nu_{n,i}(t))\psi(t)dt +\int\limits_{\mathbb
R}\mathfrak{f}_{i}(t)\psi(t)dt ,
\end{equation}
where by $C_{0}^{\infty}(\mathbb R)$ the family of all functions
$C^{\infty}(\mathbb R)$ with the compact support is denoted.

Note that
$$
\Big{|} \int\limits_{\mathbb R}F(\nu_{n,i}(t))\psi(t)dt
-\int\limits_{\mathbb R}F(u_{i}(t))\psi(t)dt \Big{|} \le
\int\limits_{\mathbb R}|F(\nu_{n,i}(t))-F(u_{i}(t))||\psi(t)|dt
\le
$$
\begin{equation}\label{eqLUSC5}
\sup\limits_{t\in \mathbb R}|F'(\xi_{i})|\sup\limits_{t\in \mathbb
R}|v^{n}_{i}(t)-u_{i}(t)|\int\limits_{\mathbb R}|\psi(t)|dt \to 0
\end{equation}
because according to (\ref{eqUSC7.1}) we have $|\xi_{i}(t)|\le
\|v^{n}(t)\|+\|u(t)\|\le C$ for all $t\in \mathbb R$.

By (\ref{eqUSC7.1}), (\ref{eqLUSC2}) and (\ref{eqLUSC4}) we find
that $u$ satisfies
\begin{equation}\label{eqLUSC6}
\dot{u}_{i} = -\nu (-u_{i-1}+2u_{i} -u_{i+1}-\lambda u_{i}
-F(u_{i})+\mathfrak{f}_{i}(t)\nonumber
\end{equation}
for all $t \in \mathbb R$ and $i\in \mathbb Z$.

This together with (\ref{eqLUSC1}) shows that $u$ is a solution of
the equation (\ref{eq2.3}) which is bounded and defined on the
whole of $\mathbb R$. Hence, $u(t)\in I_{\sigma(t,\mathfrak{f})}$
for all $t\in \mathbb R$, i.e., $u(0)\in I_{\mathfrak{f}}$. By
(\ref{eqUSC7.1}) we get $\nu_{n_{i}}(0)\to u(0)\in
I_{\mathfrak{f}}$ which concludes the proof of Lemma.
\end{proof}

Finally, we state the main result of this section.

\begin{theorem}\label{thUSC1} Assume that (5) holds and $\mathfrak{f}\in C(\mathbb R,\ell_{2})$ is Lagrange stable.
Then we have
\begin{equation}\label{eqLUSC7}
\lim\limits_{n\to
\infty}\beta(I^{n}_{\mathfrak{f}^{n}},I_{\mathfrak{f}})=0.\nonumber
\end{equation}
\end{theorem}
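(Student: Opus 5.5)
The proof will argue by contradiction and reduce everything to Lemma \ref{lUSC1}. Recall that $\beta(A,B)=\sup_{a\in A}\rho(a,B)$ is the Hausdorff semi-distance. We regard each $I^{n}_{\mathfrak{f}^{n}}\subset\mathbb R^{2n+1}$ as a subset of $\ell_{2}$ through the isometric embedding $\mathcal{I}_{n}$ of Remark \ref{remI1}.

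Suppose the conclusion fails. Then there exist $\varepsilon_0>0$ and a subsequence $n_k\to\infty$ with
\begin{equation*}
\beta(I^{n_k}_{\mathfrak{f}^{n_k}},I_{\mathfrak{f}})\ge \varepsilon_0
\end{equation*}
for all $k$. Each $I^{n_k}_{\mathfrak{f}^{n_k}}$ is compact, so the supremum defining $\beta$ is attained. Hence we can choose points $v_{0,n_k}\in I^{n_k}_{\mathfrak{f}^{n_k}}$ with $\rho(v_{0,n_k},I_{\mathfrak{f}})\ge\varepsilon_0$. If one prefers not to use attainment, choosing points with $\rho\ge \varepsilon_0/2$ suffices equally well.

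Next I apply Lemma \ref{lUSC1} to the sequence $\{v_{0,n_k}\}$. Its hypotheses hold for the following reasons.
\begin{enumerate}
\item Condition (5), i.e. $F(s)s\le 0$, is assumed.
\item Lagrange stability of $\mathfrak f$ implies $\mathfrak f\in C_b(\mathbb R,\ell_2)$, by Lemma \ref{lAPF02} (the range lies in a compact set).
\item Lemma \ref{lUSC1} is stated for a full sequence indexed by $n$, but its proof uses only compactness and the limiting argument, so it applies verbatim to a subsequence. Alternatively, pick arbitrary $v_{0,n}\in I^{n}_{\mathfrak f^{n}}$ for the remaining indices; the lemma then yields a convergent subsequence, and one reruns the argument on the subsequence $\{n_k\}$.
\end{enumerate}
The lemma gives a further subsequence $v_{0,n_{k_j}}\to u\in I_{\mathfrak f}$ in $\ell_2$. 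Then $\rho(v_{0,n_{k_j}},I_{\mathfrak f})\le\|v_{0,n_{k_j}}-u\|\to0$, which contradicts $\rho(v_{0,n_{k_j}},I_{\mathfrak f})\ge\varepsilon_0$.

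The substantive work therefore lies entirely in Lemma \ref{lUSC1}: the uniform tail estimate of Lemma \ref{l2} and Corollary \ref{cor2} provide precompactness of $\bigcup_n I^n$, and the passage to the limit identifies the limit as a bounded entire solution of (\ref{eq2.3}), hence as a point of $I_{\mathfrak f}$. The only point needing care here is the reduction to a subsequence. The lemma's proof extracts subsequences from an arbitrary index set tending to infinity, and never uses that every $n$ is present. It does need $n_k\to\infty$, both so that each fixed coordinate $i$ eventually satisfies $n_k>|i|$ and so that $\mathfrak f^{n_k}\to\mathfrak f$ by Lemma \ref{lF3}; both hold here. I expect no obstacle beyond stating this remark explicitly.
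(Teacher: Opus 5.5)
Your argument is correct and is essentially the paper's proof. Assume $\beta(I^{n_k}_{\mathfrak{f}^{n_k}},I_{\mathfrak f})\ge\delta$ along a subsequence, pick points realizing (or nearly realizing) this distance, and use Lemma~\ref{lUSC1} to extract a further subsequence converging to a point of $I_{\mathfrak f}$, which gives a contradiction. You are right to note that Lemma~\ref{lUSC1} must be applied along $\{n_k\}$, which the paper does silently, and the observation that its proof works for any $n_k\to\infty$ is what is needed. The ``fill in the remaining indices'' alternative does not work on its own, because the subsequence the lemma produces could consist entirely of filler indices.
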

\begin{proof} We argue by contradiction. If the conclusion is not true,
then there exist a sequence $u^{n_{k}}\in
I^{n_{k}}_{\mathfrak{f}^{n_k}}$ and $\delta
>0$ such that
\begin{equation}\label{eqLUSC8}
\beta(u^{n_{k}},I_{\mathfrak{f}})\ge \delta .
\end{equation}
On the other hand, by Lemma \ref{lUSC1}, there exists a
subsequence $u^{n_{k_{m}}}$ of $u^{n_{k}}$ such that
\begin{equation}\label{eqLUSC9}
\beta(u^{n_{k_{m}}},I_{\mathfrak{f}})\to 0 \nonumber
\end{equation}
which contradicts (\ref{eqLUSC8}). The theorem is completely
proved.
\end{proof}

\section{Funding}

This research was supported by the State Programs of the Republic
of Moldova "Remotely Almost Periodic Solutions of Differential
Equations (25.80012.5007.77SE)" and partially was supported by the
Institutional Research Program 011303 "SATGED", Moldova State
University.

\section{Data availability}

No data was used for the research described in the article.

\section{Conflict of Interest}

The authors declare that they  have not conflict of interest.

\medskip
\textbf{ORCID (D. Cheban):} https://orcid.org/0000-0002-2309-3823

\textbf{ORCID (A. Sultan):} https://orcid.org/0009-0003-9785-9291

\end{document}